\theoremstyle{plain}  
\newtheorem{theorem}{Theorem}[section]
\newtheorem{proposition}[theorem]{Proposition}
\newtheorem{lemma}[theorem]{Lemma}
\providecommand{\U}[1]{\protect\rule{.1in}{.1in}}
\providecommand{\U}[1]{\protect\rule{.1in}{.1in}}
\providecommand{\U}[1]{\protect\rule{.1in}{.1in}}
\providecommand{\U}[1]{\protect\rule{.1in}{.1in}}
\providecommand{\U}[1]{\protect\rule{.1in}{.1in}}
\newtheorem{remark}[theorem]{Remark}
\theoremstyle{definition}
\theoremstyle{remark}
\newcommand{\R}{\mathbb{R}}
\newcommand{\be}{\begin{equation}}	
\newcommand{\ee}{\end{equation}}	
\newcommand{\bern}{\begin{eqnarray}}	
\newcommand{\eern}{\end{eqnarray}}	
\newcommand{\norm}[1]{\left\|#1\right\|}		
\begin{document}

\title{\sc Concentration of solutions for a \\singularly perturbed mixed problem \\in non smooth domains}

\date{}

\maketitle

\begin{center}
\author{Serena Dipierro}
\end{center}

\begin{center}
{\small SISSA, Sector of Mathematical Analysis\\
Via Bonomea 265, 34136 Trieste, Italy\\
E-mail address: dipierro@sissa.it
}
\end{center}

\begin{abstract}
We consider a singularly perturbed problem with mixed Dirichlet and Neumann boundary conditions 
in a bounded domain $\Omega\subset\R^{n}$ whose boundary has an $\left(n-2\right)$-dimensional singularity. 
Assuming $1<p<\frac{n+2}{n-2}$, we prove that, under suitable geometric conditions on the boundary of the domain, 
there exist solutions which approach the intersection of the Neumann and the Dirichlet parts as 
the singular perturbation parameter tends to zero. 
\end{abstract}

{\emph{Keywords}:} Singularly Perturbed Elliptic Problems, Finite-dimensional reductions, Local Inversion. 

{\bf AMS subject classification:} 35B25, 35B34, 35J20, 35J60

\section{Introduction}

In this paper we study the following singular perturbation problem with mixed Dirichlet and Neumann boundary conditions 
in a bounded domain $\Omega\subset\R^{n}$ whose boundary $\partial\Omega$ is non smooth:
\bern
\left\{ 
\begin{array}{lll} 
         -\epsilon^ {2}\Delta u + u = u^ {p}   \quad     &   \mathrm{in\ } \Omega ,      \\
          \frac{\partial u}{\partial \nu} = 0  \quad \mathrm{on\ } \partial_{N}\Omega ,  & u=0 \quad  \mathrm{on\ } \partial_{D}\Omega ,  \\ 
          u>0   \quad  &     \mathrm{in\ } \Omega .
 \end{array} 
\right.         
 \label{problem}\eern 
Here $p\in\left(1, \frac{n+2}{n-2}\right)$ is subcritical, $\nu$ denotes the outer unit normal at $\partial\Omega$ and $\epsilon >0$ is a small parameter. 
Moreover $\partial_{N}\Omega$, $\partial_{D}\Omega$ 
are two subsets of the boundary of $\Omega$ such that the union of their closures coincides with the whole $\partial\Omega$, and their intersection is 
an $\left(n-2\right)$-dimensional smooth singularity. 

Problem $(\ref{problem})$ or some of its variants arise in several physical or 
biological models. 
Consider for instance the study of the \emph{population dynamics}: 
suppose that a species lives in a bounded region $\Omega$ whose boundary has 
two parts, $\partial_{N}\Omega$, which is an obstacle that blocks 
the pass across, and $\partial_{D}\Omega$, 
which is a killing zone for the population. 
Moreover $(\ref{problem})$ is a model of the \emph{heat conduction} 
for small conductivity, 
when there is a nonlinear source in the interior of the domain, with combined 
isothermal and isolated regions at the boundary. 

Concerning \emph{reaction-diffusion systems}, this phenomenon is 
related to the so-called Turing's instability. 
More precisely, for single equation with Neumann boundary conditions 
it is known that scalar reaction-diffusion equations 
in a convex domain admit only constant stable steady state solutions; 
see \cite{CH}, \cite{Mat}. 
On the other hand, as noticed in \cite{Tu}, 
reaction-diffusion systems with different 
diffusivities might generate 
non-homogeneous stable steady states. 
A well-known example is the Gierer-Meinhardt system, 
introduced in \cite{GM} to describe some biological experiment.  
We refer to \cite{Ni}, \cite{NTY} for more details. 

Another motivation comes from the Nonlinear Schr\"{o}dinger Equation
\be     i\hbar\frac{\partial\psi}{\partial t}   =    - \frac{\hbar^{2}}{2m} \Delta\psi + V\psi -  \gamma\vert\psi\vert^{p-2}\psi,   \nonumber\ee
where $\hbar$ is the Planck constant, $V$ is the potential, and $\gamma$ and $m$ 
are positive constants. 
In fact, if we analyze standing waves and consider the semiclassical limit 
$\hbar\rightarrow 0$, we obtain a singularly perturbed equation; 
see for example \cite{ABC}, \cite{AM}, \cite{AMS}, \cite{FW}, 
and references therein.   
 
\bigskip

Let us now describe some results which concern singularly perturbed problems 
with Neumann or Dirichlet boundary conditions, and specifically 
\bern   
\left\{ 
\begin{array}{lll} 
         -\epsilon^ {2}\Delta u + u = u^ {p}   \quad     & \mathrm{in\ } \Omega , \\
          \frac{\partial u}{\partial \nu} = 0\quad &\mathrm{on\ } \partial\Omega ,\\ 
          u>0   \quad  &     \mathrm{in\ } \Omega ,
 \end{array} 
\right.   
 \label{Neumann}\eern
and
\bern
\left\{ 
\begin{array}{lll} 
         -\epsilon^ {2}\Delta u + u = u^ {p}   \quad     &   \mathrm{in\ } \Omega , \\
           u=0 \quad    & \mathrm{on\ } \partial\Omega ,  \\ 
          u>0   \quad  &     \mathrm{in\ } \Omega .
 \end{array} 
\right.            
 \label{Dirichlet}\eern 

The study of the concentration phenomena at points 
for smooth domains is very rich and has been intensively developed 
in recent years. 
The search for such condensing solutions 
is essentially carried out by two methods. 
The first approach is variational and uses tools 
of the critical point theory or topological methods. 
A second way is to reduce the problem to a finite-dimensional 
one by means of Lyapunov-Schmidt reduction.

The typical concentration behavior of solution $U_{Q, \epsilon}$ 
to $(\ref{problem})$ is via a scaling of the variables in the form 
\be    U_{Q,\epsilon}\left(x\right)\sim U\left(\frac{x-Q}{\epsilon}\right), \label{appr}\ee
where $Q$ is some point of $\bar{\Omega}$, 
and $U$ is a solution of the problem 
\be       -\Delta U + U = U^ {p}   \quad  \mathrm{in\ } \R^{n}    
               \quad  \mathrm{(or\ in\ } \R^{n}_{+}=\left\lbrace \left(x_{1}, \ldots , x_{n}\right)\in\R^{n}  :  x_{n}>0\right\rbrace),
\label{prob}\ee   
the domain depending on whether $Q$ lies in the interior of $\Omega$ 
or at the boundary. 
When $p<\frac{n+2}{n-2}$ (and indeed only if this inequality is satisfied), 
problem $(\ref{prob})$ admits positive radial solutions 
which decay to zero at infinity; see \cite{BL}, \cite{St}. 
Solutions of $(\ref{problem})$ with this profile 
are called \textit{spike-layers}, 
since they are highly concentrated near some point of $\bar{\Omega}$. 

Consider first the problem with pure Neumann boundary conditions. 
Solutions of $(\ref{Neumann})$ 
with a concentration at one or more points of 
the boundary $\partial\Omega$ as $\epsilon\rightarrow 0$ are called 
\textit{boundary-spike layers}. 
They are peaked near critical points of the mean curvature. 
In particular, it was shown in \cite{NT1}, \cite{NT2} that 
mountain-pass solutions of $(\ref{Neumann})$ 
concentrate at $\partial\Omega$ near 
global maxima of the mean curvature.  
One can see this fact considering the variational structure of the problem. 
In fact, solutions of $(\ref{Neumann})$ can be found as critical points 
of the following Euler-Lagrange functional 
\be
 I_{\epsilon, N}\left(u\right)  = \frac{1}{2} \int_{\Omega}\left(\epsilon^ {2}\vert\nabla u\vert^ {2} + u^ {2}\right)dx - \frac{1}{p+1}\int_{\Omega}\vert u\vert^ {p+1}dx, \quad  u\in H^{1}\left(\Omega\right).
\nonumber\ee
Plugging into $I_{\epsilon, N}$ a function of the form 
$(\ref{appr})$ with $Q\in\partial\Omega$ 
one sees that 
\be   I_{\epsilon, N}\left(U_{Q, \epsilon}\right)=C_{0}\epsilon^{n}-C_{1}\epsilon^{n+1}H\left(Q\right)+o\left(\epsilon^{n+1}\right), \label{mean}\ee 
where $C_{0}, C_{1}$ are positive constants depending only 
on $n$ and $p$, and $H$ is the mean curvature; 
see for instance \cite{AM}, Lemma $9.7$. 
To obtain this expansion one can use the radial symmetry of $U$ 
and parametrize $\partial\Omega$ as a normal graph near $Q$. 
From the above formula one can see that the bigger 
is the mean curvature the lower is the energy of this function: 
roughly speaking, boundary spike layers would tend to move 
along the gradient of $H$ in order to minimize their energy. 
Moreover one can say that the energy of spike-layers is of order $\epsilon^{n}$, 
which is proportional to the volume of their \textit{support}, 
heuristically identified with a ball of radius 
$\epsilon$ centered at the peak. 
There is an extensive literature regarding the search of 
more general solutions of $(\ref{Neumann})$ 
concentrating at critical points of $H$; 
see \cite{DFW}, \cite{Gr}, \cite{GPW}, \cite{Gu}, \cite{Li}, \cite{LNT}, \cite{NPT}, \cite{We}.

Consider now the problem with pure Dirichlet boundary conditions. 
In this case spike layers with minimal energy concentrate at the interior of the domain, 
at points which maximize the distance from the boundary; 
see \cite{LN}, \cite{NW}. 
The intuitive reason for this is that, if $Q$ is in the interior of $\Omega$ 
and if we want to adapt a function like $(\ref{appr})$ to the 
Dirichlet conditions, the adjustment needs an energy which increases 
as $Q$ becomes closer and closer to $\partial\Omega$. 
Following the above heuristic argument, we could say that spike layers 
are \emph{repelled} from the regions where Dirichlet conditions are imposed.

Concerning mixed problem $(\ref{problem})$, in two recent papers \cite{GMMP1}, \cite{GMMP2} 
it was proved that, under suitable geometric conditions on 
the boundary of a smooth domain, 
there exist solutions which approach the intersection 
of the Neumann and the Dirichlet parts as the singular perturbation 
parameter tends to zero. 
In fact, denoting by $u_{\epsilon, Q}$ an approximate solution peaked at $Q$ 
and by $d_{\epsilon}$ the distance of $Q$ from the interface between 
$\partial_{N}\Omega$ and $\partial_{D}\Omega$, 
then its energy turns out to be the following 
\be I_{\epsilon}\left(u_{Q, \epsilon}\right)= C_{0}\epsilon^{n}   -  C_{1}\epsilon^{n+1}H\left(Q\right) + 
\epsilon^{n} e^{-2\frac{d_{\epsilon}}{\epsilon}\left(1 + o\left(1\right) \right)}  + 
o\left(\epsilon^{n+2}\right), \label{energy}\ee 
where $I_{\epsilon}$ is the functional associated to the mixed problem. 
Note that the first two terms in $(\ref{energy})$ are as in the 
expansion $(\ref{mean})$, while the third one represents a sort of 
\emph{potential energy} which decreases with the distance of $Q$ from the interface, 
consistently with the \emph{repulsive effect} which was described before 
for $(\ref{Dirichlet})$.

\bigskip

In almost all the papers mentioned above the case of $\Omega$ smooth was considered. 
Concerning instead the case of $\Omega$ non smooth, in \cite{Di} the author 
studied the concentration of solutions of the Neumann problem $(\ref{Neumann})$ at suitable points 
of the boundary of a non-smooth domain. 
Assuming for simplicity that $\Omega\subset\R^{3}$ is a 
piecewise smooth bounded domain whose boundary $\partial\Omega$ 
has a finite number of smooth edges, 
one can fix an edge $\Gamma$ on the boundary and consider 
the function $\alpha:\Gamma\rightarrow\R$ 
which associates to every $Q\in\Gamma$ 
the opening angle at $Q$, $\alpha\left(Q\right)$. 
Then it was proved that this function 
plays a similar role as the mean curvature $H$ for a smooth domain. 
In fact, plugging into $I_{\epsilon, N}$ a function of the form 
$(\ref{appr})$ with $Q\in\Gamma$, 
one obtains the analogous expression to $(\ref{mean})$ for 
this kind of domains, 
with $C_{0}\alpha\left(Q\right)$ instead of $C_{0}$. 
Again, one can give an heuristic explanation 
considering the fact that in this case one has to intersect 
the ball of radius $\epsilon$, which is identified with the support 
of the solution, with the domain, obtaining the dependence on the angle $\alpha\left(Q\right)$.

\bigskip

We are interested here in finding boundary spike-layers 
for the mixed problem $(\ref{problem})$. 
We call $\Gamma$ the intersection of the closures of $\partial_{N}\Omega$ and $\partial_{D}\Omega$, 
and suppose that it is an $(n-2)$-dimensional smooth singularity. 
Moreover we denote by $H$ the mean curvature of $\partial\Omega$ restricted to the closure of $\partial_{N}\Omega$, 
that is $H:\overline{\partial_{N}\Omega}\rightarrow\R$.

The main result of this paper is the following:
\begin{theorem} 
Let $\Omega\subset\R^{n}$, $n\geq 2$, be a bounded domain 
whose boundary $\partial\Omega$ has an $(n-2)$-dimensional smooth singularity, 
and $1<p<\frac{n+2}{n-2}$ ($1<p<+\infty$ if $n=2$). 
Suppose that $\partial_{N}\Omega$, $\partial_{D}\Omega$ are disjoint open sets of $\partial\Omega$ 
such that the union of the closures is the whole boundary of $\Omega$ and 
such that their intersection $\Gamma$ is the singularity. 
Suppose $Q\in\Gamma$ is such that $\alpha\left(Q\right)\neq 0$ and 
$H\vert_{\Gamma}$ 
is critical and non degenerate at $Q$, and that 
$\nabla H\left(Q\right)\neq 0$ points toward $\partial_{D}\Omega$. 
Then for $\epsilon >0$ sufficiently small problem 
$(\ref{problem})$ admits a solution concentrating at $Q$.
\label{th:solution}
\end{theorem}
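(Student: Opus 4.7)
The plan is to attack Theorem \ref{th:solution} by a finite-dimensional Lyapunov--Schmidt reduction, combining the techniques developed in \cite{GMMP1}, \cite{GMMP2} for smooth domains with mixed boundary conditions and in \cite{Di} for non-smooth Neumann problems. Near the singularity $\Gamma$ we introduce Fermi-type coordinates: a point $Q_{0}\in\Gamma$ together with two coordinates in the plane normal to $\Gamma$ (so that the dihedral angle at $Q_{0}$ is $\alpha(Q_{0})$), and parametrize candidate peak points $P$ by $Q_{0}\in\Gamma$ close to $Q$ and a signed distance $d\geq 0$ from $\Gamma$ measured along $\overline{\partial_{N}\Omega}$. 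For each such $P$ we build an approximate solution $u_{\epsilon,P}$ by projecting a rescaled copy $U((\cdot-P)/\epsilon)$ of the ground state of $(\ref{prob})$ onto the space $H^{1}_{\partial_{D}\Omega}(\Omega):=\{u\in H^{1}(\Omega):u=0\text{ on }\partial_{D}\Omega\}$, via a suitable cut-off so as to respect the Neumann condition on $\partial_{N}\Omega$ up to an error that can be controlled in terms of the mean curvature and of $\exp(-d/\epsilon)$.

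Next I would set up the reduction in the usual way. Write the solution as $u=u_{\epsilon,P}+w$ with $w$ belonging to the $L^{2}$--orthogonal complement of the tangent space to the manifold $\{u_{\epsilon,P}\}$. The linearized operator at $u_{\epsilon,P}$ is, after rescaling, a perturbation of $-\Delta+1-pU^{p-1}$ on a wedge with mixed boundary conditions, and its spectral properties degenerate only along the kernel directions generated by the parameters of $P$. Arguing as in \cite{AM}, adapted to the wedge geometry following \cite{Di} and to the mixed conditions following \cite{GMMP1}, I would prove uniform invertibility on the orthogonal complement and solve for $w=w(P,\epsilon)$ by contraction, obtaining a reduced functional $\Phi_{\epsilon}(P):=I_{\epsilon}(u_{\epsilon,P}+w(P,\epsilon))$ whose critical points yield genuine solutions of $(\ref{problem})$.

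The core computation is the asymptotic expansion of $\Phi_{\epsilon}$. By combining the volume-type contribution at a singular edge (which replaces the constant $C_{0}$ by $C_{0}\alpha(Q_{0})$, as in \cite{Di}) with the mean curvature correction and the exponentially small Dirichlet penalty derived in \cite{GMMP1} (cf. $(\ref{energy})$), one expects an expansion of the form
\be
\Phi_{\epsilon}(Q_{0},d)=C_{0}\alpha(Q_{0})\,\epsilon^{n}-C_{1}\alpha(Q_{0})H(Q_{0},d)\,\epsilon^{n+1}+\epsilon^{n}e^{-2d/\epsilon(1+o(1))}+o(\epsilon^{n+1}),
\nonumber\ee
uniformly for $(Q_{0},d)$ in a small neighbourhood of $(Q,0)$. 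The hypothesis that $\nabla H(Q)$ points toward $\partial_{D}\Omega$ guarantees that $H$ increases as $d\downarrow 0$, which, balanced against the exponential repulsion, produces a non-degenerate critical point in $d$ at some $d=d_{\epsilon}\to 0$ of order $\epsilon\log(1/\epsilon)$. The non-degenerate criticality of $H|_{\Gamma}$ at $Q$ then allows the implicit function theorem (or a degree argument) to provide a critical point $(Q_{\epsilon},d_{\epsilon})$ with $Q_{\epsilon}\to Q$, which gives the desired concentrating solution.

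The main obstacle I foresee is the analytic work at the singular edge: constructing $u_{\epsilon,P}$ with a small enough residual in the $H^{1}$-norm induced by the mixed problem, and in particular controlling the boundary error where $\partial_{N}\Omega$ and $\partial_{D}\Omega$ meet non-smoothly, requires a careful adaptation of the projections used in the smooth case. Closely related, the uniform invertibility of the linearization on a wedge with mixed Dirichlet--Neumann data at a non-smooth interface must be established with enough quantitative control to dominate the exponentially small terms that drive the expansion, so that the $o(\epsilon^{n+1})$ remainder genuinely does not interfere with the balance in the variable $d$.
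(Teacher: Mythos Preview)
Your overall architecture---Lyapunov--Schmidt reduction, approximate solutions built from the Neumann spike corrected by a Dirichlet projection, energy expansion, and a degree/IFT argument balancing the mean curvature gradient against the exponential repulsion from $\partial_{D}\Omega$---is exactly the paper's approach. However, your proposed expansion of $\Phi_{\epsilon}$ contains a genuine error that would break the proof. You write the leading term as $C_{0}\alpha(Q_{0})\epsilon^{n}$, importing the angle factor from \cite{Di}. But in that paper the peak sits \emph{on} the edge, whereas here the peak $P$ lies on $\partial_{N}\Omega$ at distance $d$ from $\Gamma$ with $d/\epsilon\to\infty$; so locally around $P$ the domain is a half-space and the leading term is simply $\tilde C_{0}\epsilon^{n}$, with no $\alpha(Q_{0})$ dependence (this is what the paper obtains). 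With your expansion, differentiating tangentially along $\Gamma$ gives $\partial_{Q_{0}}\Phi_{\epsilon}=C_{0}\alpha'(Q_{0})\epsilon^{n}+O(\epsilon^{n+1})$, so criticality in $Q_{0}$ would be governed by $\alpha'$ rather than by $\nabla H|_{\Gamma}$, and the theorem's hypothesis on $H|_{\Gamma}$ would be irrelevant. The angle enters only through the \emph{Dirichlet} correction: for $0<\alpha<\pi/2$ the paper finds an additional term $\epsilon^{n}e^{-(d/\epsilon)(1+\sqrt{2}\sin\alpha)(1+o(1))}$, which for $\alpha<\pi/4$ actually dominates your $e^{-2d/\epsilon}$ term and is the one that balances against $\nabla_{d}H$.

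Two smaller but related issues: your remainder $o(\epsilon^{n+1})$ is too coarse---at the relevant scale $d\sim\epsilon|\log\epsilon|$ the exponential term is of order $\epsilon^{n+2(1+o(1))}$, so you need $o(\epsilon^{n+2})$ (equivalently $o(\epsilon^{2})$ in the dilated variables) to see it; and the paper in fact works with the expansion of the \emph{gradient} of the reduced functional (its Propositions~\ref{pr:espansione deriv} and~\ref{pr:reduced}) rather than the functional itself, precisely to get clean control of the signs needed for the degree argument on the box $\{(2-\beta)|\log\epsilon|<d<(2+\beta)|\log\epsilon|\}\times V$.
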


\begin{remark} 
\begin{itemize}
\item[(a)] The non degeneracy condition in Theorem $\ref{th:solution}$ 
can be replaced by the condition that $Q$ is a strict local maximum or minimum of $H\vert_{\Gamma}$, 
or by the fact that there exists an open set $V$ of $\Gamma$ 
containing $Q$ such that $H\left(Q\right)>\sup_{\partial V}H$ 
or $H\left(Q\right)<\inf_{\partial V}H$.  
\item[(b)] With more precision, as $\epsilon\rightarrow 0$, the above solution 
possesses a unique global maximum point $Q_{\epsilon}\in\partial_{N}\Omega$, 
and $dist\left(Q_{\epsilon}, \Gamma\right)$ is of order 
$\epsilon\log\frac{1}{\epsilon}$. 
\end{itemize}
\label{rem:th}\end{remark}

The general strategy for proving Theorem $\ref{th:solution}$ 
relies on a finite-dimensional reduction; 
see for example the book \cite{AM}. 
One finds first a manifold $Z$ of approximate solutions to the given problem, 
which in our case are of the form $(\ref{appr})$, and solve the equation 
up to a vector parallel to the tangent plane of this manifold. 
To do this one can use the spectral properties of the linearization of $(\ref{prob})$, 
see Lemma $\ref{lem:varcritica}$. 
Then, see Theorem $\ref{th:rid}$, one generates a new manifold $\tilde{Z}$ close to $Z$ which represents a 
natural constraint for the Euler functional of $(\ref{problem})$, which is   
\be
 \tilde{I}_{\epsilon}\left(u\right)  = \frac{1}{2} \int_{\Omega}\left(\epsilon^ {2}\vert\nabla u\vert^ {2} + u^ {2}\right)dx - \frac{1}{p+1}\int_{\Omega}\vert u\vert^ {p+1}dx, \quad  u\in H^{1}_{D}\left(\Omega\right),
\nonumber\ee
where $H^{1}_{D}\left(\Omega\right)$ is the space of functions $H^{1}\left(\Omega\right)$ 
which have zero trace on $\partial_{D}\Omega$. 
By \emph{natural constraint} we mean a set for which constrained critical points 
of $\tilde{I}_{\epsilon}$ are true critical points. 

Now, we want to have a good control 
of the functional $\tilde{I}_{\epsilon}\mid_{\tilde{Z}}$. 
Improving the accuracy of the functions in the original manifold $Z$, 
we make $\tilde{Z}$ closer to $Z$; 
in this way the main term in the constrained functional 
will be given by $\tilde{I}_{\epsilon}\mid_{Z}$, 
see Propositions $\ref{pr:est}$, $\ref{pr:est3}$, $\ref{pr:est4}$. 
To find sufficiently good approximate solutions 
we start with those constructed in literature for the Neumann problem $(\ref{Neumann})$ 
(see Subsection $\ref{apprNeumann}$) which reveal the role of the mean curvature. 
The problem is that these functions are non zero on $\partial_{D}\Omega$, 
and even if one use cut-off functions to annihilate them 
the corresponding error turns out to be too large. 
Following the line of \cite{GMMP1} and \cite{NW}, we will use the 
\emph{projection operator} in $H^{1}\left(\Omega\right)$, 
which associates to every function in this space its closest element in 
$H^{1}_{D}\left(\Omega\right)$. 
To study the asymptotic behavior of this projection 
we will use the limit behavior of the solution $U$ to $(\ref{prob})$: 
\be       \lim_{r\rightarrow +\infty}  e^{r}  r^{\frac{n-2}{2}} U\left(r\right) = c_{n, p},       \label{limU}\ee
where $r=\vert x\vert$ and $c_{n, p}$ is a positive constant depending only on the dimension $n$ and $p$, 
together with 
\be      \lim_{r\rightarrow +\infty}\frac{U'\left(r\right)}{U\left(r\right)} =  -   \lim_{r\rightarrow +\infty}\frac{U''\left(r\right)}{U\left(r\right)} = -1,
\label{limU2}    \ee 
as it was done in some previous works, see for instance \cite{LN} and \cite{We2}. 
Moreover, we will work at a scale $d\simeq\epsilon\vert\log\epsilon\vert$, 
which is the order of the distance of the peak from $\Gamma$, see 
Remark $\ref{rem:th}$ (b). At this scale both $\partial_{N}\Omega$ 
and $\partial_{D}\Omega$ look flat; so we can identify them with the 
hypersurfaces of equations $x_{n}=0$ and 
$x_{1}\tan\alpha + x_{n}=0$, and 
their intersection 
with the set $\left\lbrace x_{1}=x_{n}=0\right\rbrace$. 
Note that $\alpha =\alpha\left(Q\right)$ is the angle between $x_{1}$ and $x_{n}$ at a fixed point $Q\in\Gamma$. 
Then we can replace $\Omega$ 
with a suitable domain $\Sigma_{D}$, 
which in particular for $0<\alpha\leq\pi$ is even with respect to the coordinate $x_{n}$, 
see the beginning of Subsections $\ref{sec:3.1}$ and $\ref{sec:3.2}$. 
Now, studying the projections in this domain, 
we will find functions which have zero $x_{n}$-derivative 
on $\left\lbrace x_{n}=0\right\rbrace\setminus\partial\Sigma_{D}$, 
which mimics the Neumann boundary condition on $\partial_{N}\Omega$. 
After analyzing carefully the projection in Subsections $\ref{sec:3.1}$, $\ref{sec:3.2}$, 
we will be able to define a family of suitable approximate solutions 
to $(\ref{problem})$ which have sufficient accuracy for our analysis, 
estimated in Propositions $\ref{pr:est}$, $\ref{pr:est3}$, $\ref{pr:est4}$. 

We can finally apply the above mentioned perturbation method to reduce the problem 
to a finite-dimensional one, and study the functional constrained on $\tilde{Z}$. 
We obtain an expansion of the energy of 
the approximate solutions, which turns out to be 
\be  \tilde{I}_{\epsilon}\left(u_{\epsilon, Q}\right) = \tilde{C}_{0} \epsilon^{n}  
         - \tilde{C}_{1}\epsilon^{n+1} H\left(Q\right) + 
             \epsilon^{n} e^{-2\frac{d_{\epsilon}}{\epsilon}\left(1+o\left(1\right)\right)}  +   
             \epsilon^{n} e^{-\frac{d_{\epsilon}}{\epsilon}\left( 1+ \frac{\sqrt{2} \tan\alpha\left(Q\right)}{\sqrt{\tan^{2}\alpha\left(Q\right) +1}} \right) \left(1+o\left(1\right)\right)}  + 
               o\left(\epsilon^{n+2}\right),  \nonumber\ee
in the case $0<\alpha <\frac{\pi}{2}$, and 
\be  \tilde{I}_{\epsilon}\left(u_{\epsilon, Q}\right) = 
   \tilde{C}_{0}\epsilon^{n}   - \tilde{C}_{1}\epsilon^{n+1}H\left(Q\right) + 
\epsilon^{n} e^{-2\frac{d_{\epsilon}}{\epsilon}\left(1 + o\left(1\right) \right)}  + 
o\left(\epsilon^{n+2}\right), \nonumber\ee 
in the case $\frac{\pi}{2}\leq\alpha <2\pi$. 
As for $(\ref{energy})$, we have that the first two terms come from the Neumann 
condition, while the others are related to the repulsive effect due to 
the Dirichlet condition. 
Let us notice that, in the first case, in the terms related to 
the Dirichlet condition appears the opening angle $\alpha$, 
whereas in the second case it does not; 
this phenomenon comes from the fact that the distance of the point $Q$ 
from the Dirichlet part $\partial_{D}\Omega$ depends on $\alpha$ only if $0<\alpha <\frac{\pi}{2}$.  

\medskip

Concerning the regularity of the solution, following the ideas in \cite{Gri}, 
it is possible to say that it is influenced by the presence of the angle. 
In fact, the solution is at least $C^{2}$ in the interior of the domain, far from the angle; 
whereas, near the angle, one can split the solution into a regular part and 
a singular one, whose regularity depends on the value of $\alpha$. 
For more details about the regularity of solutions in non-smooth domains 
we refer the reader to the book \cite{Gri}. 

The fact that the solution $u$ is $C^{2}$ in the interior of the domain 
allows to say also that it is strictly positive, by using the strong Maximum Principle. 
In fact, we have that $u\geq 0$ in the domain.  
Moreover, if there exists a point $x_{0}$ in the interior of the domain 
such that $u\left(x_{0}\right)=0$, we can consider a ball centered at $x_{0}$ 
of small radius suct that it is contained in the domain; 
since in the ball $u$ is $C^{2}$ we can conclude 
that $u$ cannot be zero in $x_{0}$.

\bigskip

The plan of the paper is the following. 
In Section $\ref{preliminaries}$ we collect some preliminary material: 
we recall the abstract variational perturbative scheme and some known results 
concerning the Neumann problem $(\ref{Neumann})$. 
In Section $\ref{apprsol}$ we construct a model domain to deal with the interface, 
analyze the asymptotics of projections in $H^{1}$ and then construct 
approximate solution to $(\ref{problem})$. 
Finally in Section $\ref{proof}$ we expand the functional on the natural constraint, 
prove the existence of critical points and deduce Theorem $\ref{th:solution}$. 

\medskip 

\subsection*{Notation} 
Generic fixed constant will be denoted by $C$, 
and will be allowed to vary within a single line or formula. 
The symbol
$o\left(t\right)$ 
will denote quantities for which 
$\frac{o\left(t\right)}{\vert t\vert}$ tends to zero 
as the argument $t$ goes to zero or to infinity. 
We will often use the notation $d\left(1+o\left(1\right)\right)$, 
where $o\left(1\right)$ stands for a quantity which tends to zero as 
$d\rightarrow +\infty$.

\section{Preliminaries}\label{preliminaries} 
We want to find solutions to $(\ref{problem})$ with a specific 
asymptotic profile, so it is convenient to 
make the change of variables $x\mapsto\epsilon x$, 
and study $(\ref{problem})$ in the dilated domain 
\be       \Omega_{\epsilon}:=\frac{1}{\epsilon}\Omega .             \nonumber\ee 
Then the problem becomes 
\bern
\left\{ 
\begin{array}{lll} 
         -\Delta u + u = u^ {p}   \quad & \mathrm{in\ } \Omega_{\epsilon},      \\
          \frac{\partial u}{\partial \nu} = 0  \quad \mathrm{on\ } \partial_{N}\Omega_{\epsilon},   &
            u=0 \quad  \mathrm{on\ } \partial_{D}\Omega_{\epsilon},  \\ 
          u>0   \quad    &  \mathrm{in\ } \Omega_{\epsilon},
 \end{array} 
\right.         
 \label{problem1}\eern 
where $\partial_{N}\Omega_{\epsilon}$ and $\partial_{D}\Omega_{\epsilon}$ 
stand for the dilations of $\partial_{N}\Omega$ and $\partial_{D}\Omega$ respectively. 
Moreover we denote by $\Gamma_{\epsilon}$ 
the intersection of the closures of $\partial_{N}\Omega_{\epsilon}$ and $\partial_{D}\Omega_{\epsilon}$. 

Solutions of  $(\ref{problem1})$ can be found as critical points  
of the Euler-Lagrange functional
\be
 I_{\epsilon}\left(u\right)  = \frac{1}{2} \int_{\Omega_{\epsilon}}\left(\vert\nabla u\vert^ {2} + u^ {2}\right)dx - \frac{1}{p+1}\int_{\Omega_{\epsilon}}\vert u\vert^ {p+1}dx, \quad  u\in H^{1}_{D}\left(\Omega_{\epsilon}\right).
  \nonumber \ee
Here $H^{1}_{D}\left(\Omega_{\epsilon}\right)$ denotes the space of functions in $H^{1}\left(\Omega_{\epsilon}\right)$ 
with zero trace on $\partial_{D}\Omega_{\epsilon}$. 

In the next subsection we introduce the abstract perturbation method 
which takes advantage of the variational structure of the problem, 
and allows us to reduce it to a finite dimensional one. 
We refer the reader mainly to \cite{AM}, \cite{Ma} 
and the bibliography therein for the abstract method. 
In our case we will use some small modifications of the arguments in the latter references 
which can be found in Subsection $2.1$ of \cite{GMMP1}.

\subsection{Perturbation in critical point theory} \label{perturbation}
In this subsection we recall some results 
about the existence of critical points 
for a class of functionals which are perturbative in nature. 
Given an Hilbert space $\emph{H}$, 
which might depend on the perturbation parameter $\epsilon$, 
we consider manifolds embedded smoothly in $\emph{H}$, for which
\begin{itemize}
\item[i)]  there exists a smooth finite-dimensional manifold $Z_{\epsilon}\subseteq\emph{H}$ 
and $C, r>0$ such that for any $z\in Z_{\epsilon}$, 
the set $Z_{\epsilon}\cap B_{r}\left(z\right)$ can be parametrized by a map on $B^{\R^{d}}_{1}$ 
whose $C^{3}$ norm is bounded by $C$.
\end{itemize} 
Moreover we are interested in functionals $I_{\epsilon}:\emph{H}\rightarrow\R$ of class $C^{2, \gamma}$ 
which satisfy the following properties:
\begin{itemize}
\item[ii)] there exists a continuous function $f:\left(0, \epsilon_{0}\right)\rightarrow\R$ 
with $\lim_{\epsilon\rightarrow 0}f\left(\epsilon\right)=0$ such that 
$\norm{I'_{\epsilon}(z)}\leq f\left(\epsilon\right)$ for every $z\in Z_{\epsilon}$; 
moreover $\norm{I''_{\epsilon}(z)\left[ q\right] }\leq f\left(\epsilon\right)\norm{q}$ 
for every $z\in Z_{\epsilon}$ and every $q\in T_{z}Z_{\epsilon}$;
\item[iii)] there exist $C, \gamma\in\left(0, 1\right]$, $r_{0}>0$ such that $\norm{I''_{\epsilon}}_{C^{\gamma}}\leq C$ 
in the subset $\left\lbrace u:dist\left(u, Z_{\epsilon}\right)<r_{0}\right\rbrace$; 
\item[iv)] letting $P_{z}:\emph{H}\rightarrow\left(T_{z}Z_{\epsilon}\right)^{\perp}$, for every $z\in Z_{\epsilon}$, 
be the projection onto the orthogonal complement of $T_{z}Z_{\epsilon}$, 
there exists $C>0$, independent of $z$ and $\epsilon$, such that $P_{z}I''_{\epsilon}(z)$, 
restricted to $\left(T_{z}Z_{\epsilon}\right)^{\perp}$, is invertible from $\left(T_{z}Z_{\epsilon}\right)^{\perp}$ 
into itself, and the inverse operator satisfies $\norm{\left( P_{z}I''_{\epsilon}(z)\right)^{-1}}\leq C$.
\end{itemize}   
We set $W=\left(T_{z}Z_{\epsilon}\right)^{\perp}$, and look for critical points of  $I_{\epsilon}$ in the form $u=z+w$ 
with $z\in Z_{\epsilon}$ and $w\in W$. 
If $P_{z}:\emph{H}\rightarrow W$ is as in $iv)$, 
the equation $I'_{\epsilon}\left(z+w\right) = 0$ 
is equivalent to the following system
\bern
\left\{ 
\begin{array}{ll} 
          P_{z}I'_{\epsilon}\left(z+w\right) = 0  \qquad & \mathrm{\left(\textit{the\ auxiliary\ equation}\right),}      \\
           \left(Id-P_{z}\right) I'_{\epsilon}\left(z+w\right) = 0  \quad & \mathrm{\left(\textit{the\ bifurcation\ equation}\right).}
 \end{array} 
\right.         
 \label{aux_bif}\eern 

\begin{proposition} 
\textsl{(See Proposition $2.1$ in \cite{GMMP1})} 
Let $i)-iv)$ hold true. 
Then there exists $\epsilon_{0}>0$ with the following property: 
for all $\vert\epsilon\vert<\epsilon_{0}$ and for all $z\in Z_{\epsilon}$, 
the auxiliary equation in $(\ref{aux_bif})$ has a unique solution $w=w_{\epsilon}(z)\in W$, 
which is of class $C^{1}$ with respect to $z\in Z_{\epsilon}$ 
and  such that $\norm{w_{\epsilon}(z)}\leq C_{1}f\left(\epsilon\right)$ as $\vert\epsilon\vert\rightarrow 0$, 
uniformly with respect to $z\in Z_{\epsilon}$. 
Moreover the derivative of $w$ with respect to $z$,  $w'_{\epsilon}$ 
satisfies the bound $\norm{w'_{\epsilon}(z)}\leq CC_{1}f\left(\epsilon\right)^{\gamma}$.
\label{pr:fi}\end{proposition}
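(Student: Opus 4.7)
The plan is to reduce the auxiliary equation to a fixed-point problem on the small ball $B_r(0)\subset W$ and apply the contraction mapping principle, using assumptions (ii)--(iv) to control each term. Throughout, write
\[
  P_z I'_\epsilon(z+w) \;=\; P_z I'_\epsilon(z) \;+\; P_z I''_\epsilon(z)[w] \;+\; N_{\epsilon,z}(w),
\]
where $N_{\epsilon,z}(w):=P_z\bigl(I'_\epsilon(z+w)-I'_\epsilon(z)-I''_\epsilon(z)[w]\bigr)$. By (iii) and a standard integral representation,
\[
  \|N_{\epsilon,z}(w)\|\le C\,\|w\|^{1+\gamma},\qquad \|N_{\epsilon,z}(w_1)-N_{\epsilon,z}(w_2)\|\le C\bigl(\|w_1\|^\gamma+\|w_2\|^\gamma\bigr)\|w_1-w_2\|,
\]
uniformly in $z\in Z_\epsilon$ and $w,w_1,w_2$ in an $r_0$-ball.

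Next I invert $P_z I''_\epsilon(z)$ on $W$ via assumption (iv) and rewrite the auxiliary equation as $w=G_{\epsilon,z}(w)$, where
\[
  G_{\epsilon,z}(w):=-\bigl(P_z I''_\epsilon(z)\bigr)^{-1}\bigl[P_z I'_\epsilon(z)+N_{\epsilon,z}(w)\bigr].
\]
Using (ii) and (iv), $\|G_{\epsilon,z}(0)\|\le C\,f(\epsilon)$. Choosing $C_1$ large and $\epsilon_0$ small enough that $C_1 f(\epsilon)<r_0$ and $C(C_1 f(\epsilon))^\gamma<\tfrac12$, the ball $B_{C_1 f(\epsilon)}(0)\subset W$ is stable under $G_{\epsilon,z}$, and $G_{\epsilon,z}$ is a $\tfrac12$-contraction there. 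Banach's theorem produces the unique fixed point $w_\epsilon(z)$ with $\|w_\epsilon(z)\|\le C_1 f(\epsilon)$.

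For the $C^1$ regularity in $z$, I would apply the implicit function theorem to the map $(\epsilon,z,w)\mapsto P_z I'_\epsilon(z+w)$: the partial differential in $w$ at $(z,w_\epsilon(z))$ is $P_z I''_\epsilon(z+w_\epsilon(z))$, which differs from $P_z I''_\epsilon(z)$ by at most $C\|w_\epsilon(z)\|^\gamma=O(f(\epsilon)^\gamma)$ in operator norm by (iii), hence remains invertible with uniformly bounded inverse for $\epsilon$ small. Differentiating the identity $P_z I'_\epsilon(z+w_\epsilon(z))\equiv 0$ with respect to $z$ and solving for $w'_\epsilon(z)$, the right-hand side carries a factor $I''_\epsilon(z+w_\epsilon(z))-I''_\epsilon(z)$, bounded by $C\|w_\epsilon(z)\|^\gamma\le C C_1 f(\epsilon)^\gamma$, together with terms from differentiating the projection $P_z$ (handled with the smooth parametrization in (i)) and from the small quantity $I'_\epsilon(z+w)$ itself. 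This gives the advertised bound $\|w'_\epsilon(z)\|\le C C_1 f(\epsilon)^\gamma$.

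The only delicate point is making the whole construction uniform in $z\in Z_\epsilon$ and well-defined through the moving splitting $H=T_z Z_\epsilon\oplus W$; this is exactly where hypothesis (i) (uniform $C^3$ parametrizations) and the uniform lower bound in (iv) are used, so that the projectors $P_z$ and the inverses $(P_z I''_\epsilon(z))^{-1}$ depend smoothly on $z$ with constants independent of $z$ and $\epsilon$. Everything else is a routine Lyapunov--Schmidt argument.
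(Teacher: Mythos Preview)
The paper does not give its own proof of this proposition: it is quoted verbatim as Proposition~2.1 of \cite{GMMP1} (with the book \cite{AM} as a further reference for the abstract scheme), and no argument is supplied in the present paper. Your sketch is precisely the standard Lyapunov--Schmidt reduction via the contraction mapping principle that underlies those references, and it is correct; in particular your use of (ii) to bound $P_z I''_\epsilon(z)[q]$ for $q\in T_z Z_\epsilon$ is the point that produces the $f(\epsilon)^\gamma$ bound on $w'_\epsilon$, exactly as stated.
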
 

\noindent We shall now solve the bifurcation equation in $(\ref{aux_bif})$. 
In order to do this, let us define the \textit{reduced functional} $\mathbf{I}_{\epsilon} : Z_{\epsilon}\rightarrow\R$ by setting 
$\mathbf{I}_{\epsilon}(z) = I_{\epsilon}(z+ w_{\epsilon}(z))$. 

\begin{theorem}
\textsl{(See Proposition $2.3$ in \cite{GMMP1})} 
Suppose we are in the situation of Proposition $\ref{pr:fi}$, 
and let us assume that $\mathbf{I}_{\epsilon}$ has, for $\vert\epsilon\vert$ sufficiently small, 
a stationary point $z_{\epsilon}$. 
Then $u_{\epsilon} = z_{\epsilon} + w(z_{\epsilon})$ is a critical point of $I_{\epsilon}$. 
Furthermore, there exist $\tilde{c}, \tilde{r}>0$ such that if $u$ is a critical point of $I_{\epsilon}$ 
with $dist\left(u, Z_{\epsilon, \tilde{c}}\right)<\tilde{r}$, where 
$Z_{\epsilon, \tilde{c}}=\left\lbrace z\in Z_{\epsilon}:dist\left(z, \partial Z_{\epsilon}\right)>\tilde{c}\right\rbrace$, 
then $u$ has to be of the form $z_{\epsilon} + w(z_{\epsilon})$ for some $z_{\epsilon}\in Z_{\epsilon}$.
\label{th:rid}
\end{theorem}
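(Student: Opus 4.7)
The plan is to prove both parts by the same device: the chain-rule computation of $\nabla\mathbf{I}_\epsilon$ combined with the uniqueness clause of Proposition \ref{pr:fi}. For the direct assertion I would show that stationarity of $\mathbf{I}_\epsilon$ at $z_\epsilon$, together with the auxiliary equation satisfied by $w_\epsilon(z_\epsilon)$, leaves only the trivial gradient for $I_\epsilon$ at $u_\epsilon = z_\epsilon + w_\epsilon(z_\epsilon)$. For the converse I would parametrize any critical point sufficiently close to $Z_{\epsilon,\tilde c}$ by its nearest point on $Z_\epsilon$ and recognize the normal displacement as the unique solution of the auxiliary equation provided by Proposition \ref{pr:fi}.

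\emph{First part.} Set $\xi:=I'_\epsilon(u_\epsilon)$. The auxiliary equation satisfied by $w_\epsilon(z_\epsilon)$ gives $P_{z_\epsilon}\xi=0$, so $\xi\in T_{z_\epsilon}Z_\epsilon$. On the other hand, differentiating $\mathbf{I}_\epsilon(z)=I_\epsilon(z+w_\epsilon(z))$ at $z_\epsilon$ and using $\mathbf{I}'_\epsilon(z_\epsilon)=0$ yields
\be
 \langle \xi,\, v+w'_\epsilon(z_\epsilon)[v]\rangle = 0 \qquad \text{for all } v\in T_{z_\epsilon}Z_\epsilon.
\nonumber\ee
Since $\xi\in T_{z_\epsilon}Z_\epsilon$, only the tangential component of $w'_\epsilon(z_\epsilon)[v]$ contributes, and the relation reads $\langle\xi,(\mathrm{Id}+A_\epsilon)v\rangle=0$, where $A_\epsilon$ is the endomorphism of $T_{z_\epsilon}Z_\epsilon$ obtained by composing $w'_\epsilon(z_\epsilon)$ with the orthogonal projection onto $T_{z_\epsilon}Z_\epsilon$. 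By Proposition \ref{pr:fi} one has $\norm{w'_\epsilon(z_\epsilon)}\le Cf(\epsilon)^\gamma$, so $\norm{A_\epsilon}<1$ for $\epsilon$ small and $\mathrm{Id}+A_\epsilon$ is invertible; choosing $v=(\mathrm{Id}+A_\epsilon)^{-1}\xi$ then forces $\xi=0$, i.e.\ $I'_\epsilon(u_\epsilon)=0$.

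\emph{Second part.} Condition i) together with the definition of $Z_{\epsilon,\tilde c}$ allows to construct, for $\tilde r$ small enough, a smooth nearest-point projection $\pi$ onto $Z_\epsilon$ defined on the $\tilde r$-tubular neighborhood of $Z_{\epsilon,\tilde c}$; the uniformity of the implicit constants comes from the $C^3$ bound on the local parametrizations of $Z_\epsilon$ in i). Given a critical point $u$ with $dist(u,Z_{\epsilon,\tilde c})<\tilde r$, set $z^{*}=\pi(u)$ and $w^{*}=u-z^{*}$. By construction $w^{*}\in W=(T_{z^{*}}Z_\epsilon)^\perp$ with $\norm{w^{*}}<\tilde r$, and $I'_\epsilon(u)=0$ gives in particular $P_{z^{*}}I'_\epsilon(z^{*}+w^{*})=0$, i.e.\ $w^{*}$ solves the auxiliary equation at $z^{*}$. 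Shrinking $\tilde r$ below the radius in which Proposition \ref{pr:fi} guarantees a unique small $W$-valued solution, we conclude $w^{*}=w_\epsilon(z^{*})$, so that $u=z^{*}+w_\epsilon(z^{*})$ as required.

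The main obstacle is the first part: since the subspace $W=(T_zZ_\epsilon)^\perp$ varies with $z$, the correspondence $z\mapsto w_\epsilon(z)$ is not constrained to lie in a fixed subspace and therefore $w'_\epsilon(z)[v]$ need not be orthogonal to $T_zZ_\epsilon$; consequently the naive identity $\mathbf{I}'_\epsilon(z)[v]=\langle I'_\epsilon(u),v\rangle$ is not available. The Neumann-series perturbation of the identity described above, which crucially uses the quantitative bound $\norm{w'_\epsilon}\leq Cf(\epsilon)^\gamma$ provided by Proposition \ref{pr:fi}, is what repairs this and makes the reduction faithful.
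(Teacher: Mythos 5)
The paper does not reproduce a proof of this statement; it cites Proposition~2.3 of \cite{GMMP1} verbatim, so there is no in-paper argument to compare against. Your proof is correct and follows the standard argument for the natural-constraint theorem in Lyapunov--Schmidt reductions: in the first part you use the auxiliary equation to place $\xi=I'_\epsilon(u_\epsilon)\in T_{z_\epsilon}Z_\epsilon$, differentiate the reduced functional by the chain rule, and absorb the tangential part of $w'_\epsilon(z_\epsilon)$ into an invertible perturbation $\mathrm{Id}+A_\epsilon$ of the identity on the (finite-dimensional) tangent space; in the second part you use the uniform tubular neighbourhood supplied by condition~i) and the uniqueness clause of Proposition~\ref{pr:fi}. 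The subtlety you flag is genuine: since $W=(T_zZ_\epsilon)^\perp$ varies with $z$, the derivative $w'_\epsilon(z_\epsilon)[v]$ is not automatically orthogonal to $T_{z_\epsilon}Z_\epsilon$, and the smallness estimate $\norm{w'_\epsilon}\le CC_1 f(\epsilon)^\gamma$ from Proposition~\ref{pr:fi} is precisely what makes the Neumann-series inversion work uniformly in $z$. Two very minor points worth being explicit about: for the first part the invertibility of $\mathrm{Id}+A_\epsilon$ must be uniform in $z_\epsilon\in Z_\epsilon$ (which it is, since the bound on $w'_\epsilon$ in Proposition~\ref{pr:fi} is uniform in $z$); and for the second part you need $z^*=\pi(u)$ to land in $Z_\epsilon$ away from $\partial Z_\epsilon$, which is exactly why the shrunken set $Z_{\epsilon,\tilde c}$ appears in the statement, so it is worth remarking that $\pi$ maps the $\tilde r$-neighbourhood of $Z_{\epsilon,\tilde c}$ into $Z_\epsilon$ once $\tilde r<\tilde c$.
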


\subsection{Approximate solutions for $(\ref{problem})$ with Neumann conditions} \label{apprNeumann}
In this subsection we introduce some convenient coordinates which stretch the boundary 
and we recall some results from \cite{AM} and \cite{GMMP1} 
concerning approximate solutions to the Neumann problem.

\medskip

First of all it can be shown that 
near a generic point $Q\in\Gamma$ 
the boundary of $\Omega$ can be described by a coordinate system $y=\left(y_{1}, \ldots , y_{n}\right)$ such that 
\begin{itemize}
\item[(a)] $\partial_{N}\Omega$ coincides with $\left\lbrace y_{n}=0\right\rbrace$,
\item[(b)] $\partial_{D}\Omega$ coincides with $\left\lbrace y_{1}\tan\alpha+y_{n}=0\right\rbrace$, 
where $\alpha=\alpha\left(Q\right)$ is the opening angle of $\Gamma$ at $Q$,
\item[(c)] the corresponding metric coefficients are given by $g_{ij}=\delta_{ij}+o\left(\epsilon\right)$.
\end{itemize}
For further details we refer the reader to \cite{Di}. 

\begin{remark}
\begin{itemize}
\item[(i)] We stress that, in the new coordinates $y$, the origin parametrizes 
the point $Q$, and those functions decaying as 
$\vert y\vert\rightarrow +\infty$ will \emph{concentrate} near $Q$. 
\item[(ii)] It is also useful to understand how the metric coefficients $g_{ij}$ 
vary with $Q$. Notice that condition $(c)$ says that the deviation from the 
Kronecker symbols is of order $\epsilon$, and we are working in a domain scaled 
of $\frac{1}{\epsilon}$; hence a variation of order $1$ of $Q$ corresponds to a 
variation of order $\epsilon$ in the original domain. 
Therefore, a variation of order $1$ in $Q$ yields a difference of order 
$\epsilon^{2}$ in $g_{ij}$, and precisely
\be    \frac{\partial g_{ij}}{\partial Q} = 
                 o\left( \epsilon^{2}\vert y\vert^{2} \right), 
\nonumber\ee 
with a similar estimate for the derivatives of the inverse coefficients $g^{ij}$. 
For more details see the end of Subsection $9.2$ in \cite{AM}. 
\end{itemize}
\label{rem:gij}\end{remark}

\noindent Suppose that this coordinate system $y$ is defined in $B_{\mu_{0}}\left(Q\right)$, 
with $\mu_{0}>0$ sufficiently small. 
Now, in this set of coordinates we choose a cut-off function $\chi_{\mu_{0}}$ with the following properties
\bern
\left\{ 
\begin{array}{lll} 
           \chi_{\mu_{0}}\left(x\right) = 1 \qquad & \mathrm{in\ }   B_{\frac{\mu_{0}}{4}}\left(Q\right),    \\
            \chi_{\mu_{0}}\left(x\right) = 0 \qquad  & \mathrm{in\ } \R^{n}\setminus B_{\frac{\mu_{0}}{2}}\left(Q\right), \\
            \vert\nabla\chi_{\mu_{0}}\vert + \vert\nabla^{2}\chi_{\mu_{0}}\vert\leq C \qquad  & \mathrm{in\ } B_{\frac{\mu_{0}}{2}}\left(Q\right)\setminus B_{\frac{\mu_{0}}{4}}\left(Q\right),
 \end{array} 
\right.         
 \nonumber\eern  
and we define the approximate solution $\bar{u}_{\epsilon, Q}$ as 
\be    \bar{u}_{\epsilon, Q}\left(y\right)  :=  \chi_{\mu_{0}}\left(\epsilon y\right)\left(U_{Q}\left(y\right)+\epsilon w_{Q}\left(y\right)\right), 
\label{function}\ee
where $U_{Q}\left(y\right)=U\left(y-Q\right)$ and 
$w_{Q}$ is a suitable function obtained in Subsection $2.2$ of \cite{GMMP1} by a small 
modifications of Lemma $9.3$ in \cite{AM}, 
satisfying the following estimate
\be      \vert  w_{Q}\left(y\right)\vert +  \vert \nabla w_{Q}\left(y\right)\vert  + \vert \nabla^{2} w_{Q}\left(y\right)\vert  
            \leq  C_{\Omega} \left( 1 + \vert y\vert^{K}\right) e^{-\vert y\vert},        \label{est w}\ee
where $C_{\Omega}$ and $K$ are constants depending on $\Omega$, $H$, $n$ and $p$. 

The next result collects estimates obtained following the same arguments of 
Lemmas $9.4$, $9.7$ and $9.8$ in \cite{AM}. 
\begin{proposition} 
There exist $C, K>0$ such that for $\epsilon$ small the following estimates hold
\be
\vert \frac{\partial\bar{u}_{\epsilon, Q}}{\partial\nu_{g}}\vert \left(y\right) \leq  
\left\{ 
\begin{array}{ll} 
           C\epsilon^{2} \left( 1 + \vert y\vert^{K}\right) e^{-\vert y\vert}    \qquad & \mathrm{for\ }  
            \vert y\vert\leq\frac{\mu_{0}}{4\epsilon},    \\
            C e^{-\frac{1}{C\epsilon}} \qquad  & \mathrm{for\ }  \frac{\mu_{0}}{4\epsilon}\leq\vert y\vert\leq\frac{\mu_{0}}{2\epsilon};
\end{array} 
\right.         
\nonumber\ee
\be
\vert -\Delta_{g}\bar{u}_{\epsilon, Q} +\bar{u}_{\epsilon, Q} -  \bar{u}_{\epsilon, Q}^{p}  \vert \left(y\right) \leq  
\left\{ 
\begin{array}{ll} 
           C\epsilon^{2} \left( 1 + \vert y\vert^{K}\right) e^{-\vert y\vert}    \qquad & \mathrm{for\ }  
            \vert y\vert\leq\frac{\mu_{0}}{4\epsilon},    \\
            C e^{-\frac{1}{C\epsilon}} \qquad  & \mathrm{for\ }  \frac{\mu_{0}}{4\epsilon}\leq\vert y\vert\leq\frac{\mu_{0}}{2\epsilon};
\end{array} 
\right.         
\nonumber\ee
\be      I_{\epsilon, N}\left(\bar{u}_{\epsilon, Q}\right) = \tilde{C}_{0} -  \tilde{C}_{1}\epsilon H\left(\epsilon Q\right) + o\left(\epsilon^{2}\right);      \qquad 
       \frac{\partial}{\partial Q}  I_{\epsilon, N}\left(\bar{u}_{\epsilon, Q}\right) =  -\tilde{C}_{1}\epsilon^{2} H'\left(\epsilon Q\right) + o\left(\epsilon^{2}\right),  \nonumber\ee
where 
\be    \tilde{C}_{0}=\left(\frac{1}{2}-\frac{1}{p+1}\right) \int_{\R^{n}_{+}} U^{p+1} dy,      \qquad 
          \tilde{C}_{1}=\left(\int_{0}^{\infty} r^{n} U_{r}^{2} dr \right) \int_{S^{n}_{+}} y_{n}\vert y' \vert^{2} d\sigma .   \nonumber\ee   
\label{prop:estNeumann} \end{proposition}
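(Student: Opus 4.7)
The proof follows the scheme of Lemmas 9.4, 9.7, 9.8 in \cite{AM}, with the essential point that the corrector $\epsilon w_{Q}$ is built to kill the leading Neumann boundary error. Throughout I work in the coordinates $y$ of (a)–(c) in which $\partial_{N}\Omega_{\epsilon}=\{y_{n}=0\}$ and $g_{ij}=\delta_{ij}+o(\epsilon|y|)$, using \eqref{est w} and the exponential decay of $U$ freely.

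First I would treat the normal derivative estimate. On $\{|y|\leq \mu_{0}/(4\epsilon)\}$ the cutoff $\chi_{\mu_{0}}(\epsilon y)$ is identically $1$, so $\bar{u}_{\epsilon,Q}=U_{Q}+\epsilon w_{Q}$. Since $U_{Q}$ is radial and $Q\in\{y_{n}=0\}$, the Euclidean normal derivative $\partial_{y_{n}}U_{Q}$ vanishes on $\{y_{n}=0\}$; the discrepancy between the Euclidean normal and $\nu_{g}$ comes from the metric expansion in (c) and produces a first order contribution of the form $\epsilon A(y)\!\cdot\!\nabla U_{Q}$ with $A$ polynomially bounded. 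The correction $w_{Q}$ of \cite{GMMP1}, Subsection 2.2, is chosen precisely so that $\partial_{\nu_{g}}(U_{Q}+\epsilon w_{Q})$ is of order $\epsilon^{2}(1+|y|^{K})e^{-|y|}$. On the annulus $\mu_{0}/(4\epsilon)\leq|y|\leq \mu_{0}/(2\epsilon)$ the cutoff and its first derivatives are bounded, while $U_{Q}$, $w_{Q}$ and their gradients are already $O(e^{-\mu_{0}/(4\epsilon)})$, giving the exponentially small bound $Ce^{-1/(C\epsilon)}$.

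Next I would establish the equation estimate by writing
\[
-\Delta_{g}\bar{u}_{\epsilon,Q}+\bar{u}_{\epsilon,Q}-\bar{u}_{\epsilon,Q}^{p}
=(-\Delta_{g}+\Delta)U_{Q}+\epsilon L w_{Q}+R_{\epsilon},
\]
where $L=-\Delta+1-pU_{Q}^{p-1}$ is the linearization at $U_{Q}$ and $R_{\epsilon}$ absorbs the nonlinear remainder, the cutoff terms and the higher order metric contributions. Using (c) and Remark \ref{rem:gij} the expansion of $(-\Delta_{g}+\Delta)U_{Q}$ has leading term $\epsilon B(y)U_{Q}$ with $B$ polynomially bounded; by construction $w_{Q}$ solves $L w_{Q}=-B(y)U_{Q}$ in $\mathbb{R}^{n}_{+}$ with the boundary correction of Step 1, and the two $O(\epsilon)$ contributions cancel. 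The nonlinear piece $|(U_{Q}+\epsilon w_{Q})^{p}-U_{Q}^{p}-\epsilon p U_{Q}^{p-1}w_{Q}|$ is controlled by \eqref{est w}, leaving an $O(\epsilon^{2}(1+|y|^{K})e^{-|y|})$ error on $\{|y|\leq\mu_{0}/(4\epsilon)\}$. The annular part is handled again by exponential decay.

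For the energy expansion I would test the equation satisfied by $\bar{u}_{\epsilon,Q}$ (modulo the errors just established) against $\bar{u}_{\epsilon,Q}$ to reduce $I_{\epsilon,N}(\bar{u}_{\epsilon,Q})$ to
\[
\Big(\tfrac{1}{2}-\tfrac{1}{p+1}\Big)\int_{\Omega_{\epsilon}}\bar{u}_{\epsilon,Q}^{p+1}\,dv_{g}+O(\epsilon^{2}),
\]
then parametrize $\partial_{N}\Omega_{\epsilon}$ locally as a graph over its tangent plane at $Q$ and expand $\sqrt{\det g}(y)=1+\epsilon\,h(Q,y)+o(\epsilon)$, with $h$ linear in the second fundamental form. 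Plugging in $U_{Q}^{p+1}$ and integrating out the tangential variables by radial symmetry yields $\tilde{C}_{0}$ from the leading Jacobian and $-\tilde{C}_{1}\epsilon H(\epsilon Q)$ from $h$; the identification $\tilde{C}_{1}=(\int_{0}^{\infty}r^{n}U_{r}^{2}\,dr)\int_{S^{n}_{+}}y_{n}|y'|^{2}\,d\sigma$ follows from the standard integration by parts against $U$ of the first order curvature term. Differentiation with respect to $Q$ uses Remark \ref{rem:gij} to differentiate the geometric quantities inside the expansion and the exponential decay of $U_{Q}$ to control the remainder, producing $-\tilde{C}_{1}\epsilon^{2}H'(\epsilon Q)+o(\epsilon^{2})$.

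The main obstacle is the accurate identification of the mean curvature coefficient $\tilde{C}_{1}$: this requires a careful second order expansion of the Jacobian and of $g_{ij}$ in the boundary chart, together with the integration by parts that converts an integral of $U^{p+1}y_{n}|y'|^{2}$ into $\int_{0}^{\infty}r^{n}U_{r}^{2}\,dr$ as in \cite{AM}. Once this step is in place, the remaining contributions of $\epsilon w_{Q}$, of the cutoff and of the second order metric terms are all $o(\epsilon^{2})$ by the pointwise bounds of Steps 1 and 2.
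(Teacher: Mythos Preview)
Your proposal is correct and follows exactly the route the paper indicates: the paper does not give its own proof of this proposition but simply states that it ``collects estimates obtained following the same arguments of Lemmas 9.4, 9.7 and 9.8 in \cite{AM}'', and your sketch is precisely an outline of those arguments (the role of $w_{Q}$ in cancelling the $O(\epsilon)$ boundary and interior errors, the exponential smallness on the annulus, and the Jacobian expansion producing the $H(\epsilon Q)$ term). There is nothing to compare, since you and the paper point to the same source.
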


\noindent An immediate consequence of this proposition is that 
\be     \norm{I'_{\epsilon}\left(\bar{u}_{\epsilon, Q}\right)}   \leq    C\epsilon^{2}     \qquad     
            \mathrm{for\ all\ }      Q\in\partial_{N}\Omega_{\epsilon}    \mathrm{\ such\ that\ }     
             dist\left(Q, \Gamma_{\epsilon}\right) \geq    \frac{\mu_{0}}{\epsilon},    
\label{estNeumann}\ee 
where $C>0$ is some fixed constant and $\mu_{0}$ is as before.

\section{Approximate solutions to $(\ref{problem1})$} \label{apprsol}
To construct good approximate solutions to $(\ref{problem1})$, 
we will start from a family of known functions which constitute 
good approximate solutions to $(\ref{problem1})$ when we impose 
pure Neumann boundary conditions. 
Since we have to take into account the effect of the Dirichlet boundary conditions, 
we will modify these functions in a convenient way. 
Following the line of \cite{GMMP1} and \cite{NW}, 
we will use the \textit{projection operator} onto $H^{1}_{D}\left(\Omega_{\epsilon}\right)$, 
which associates to every element in $H^{1}\left(\Omega_{\epsilon}\right)$ 
its closest point in $H^{1}_{D}\left(\Omega_{\epsilon}\right)$. 
Explicitly, this is constructed subtracting to any given $u\in H^{1}\left(\Omega_{\epsilon}\right)$ 
the solution to 
\bern
\left\{ 
\begin{array}{lll} 
         -\Delta v + v = 0   \quad & \mathrm{in\ } \Omega_{\epsilon},      \\
           v=u \quad     &    \mathrm{on\ } \partial_{D}\Omega_{\epsilon},  \\
          \frac{\partial v}{\partial \nu} = 0  \quad      &   \mathrm{on\ } \partial_{N}\Omega_{\epsilon}.
 \end{array} 
\right.         
 \label{dirichlet}\eern 
This solution can be found variationally by looking at the following minimum problem
\be     \inf_{v=u \mathrm{\ on\ } \partial_{D}\Omega_{\epsilon}} \left\lbrace \int_{\Omega_{\epsilon}}\left(\vert\nabla v\vert^ {2} + v^ {2}\right)dx \right\rbrace . \nonumber\ee
Instead of studying $(\ref{dirichlet})$ directly, 
it is convenient to modify the domain in order that the region of the boundary 
near $\Gamma_{\epsilon}$ becomes flat. 
We fix $Q\in\Gamma_{\epsilon}$ and consider the opening angle of $\Gamma_{\epsilon}$ at $Q$, 
$\alpha =\alpha\left(Q\right)$. 
Since the construction of this new domain is different for $0<\alpha\leq\pi$ and $\pi <\alpha <2\pi$, 
we will study separately the two cases in the following two subsections.

\subsection{Case  $0<\alpha\leq\pi$} \label{sec:3.1}
For technical reasons we construct a domain $\Sigma$ in the following way: 
we consider two hypersurfaces defined by the equations $x_{1}\tan\alpha +x_{n}=0$ and $x_{1}\tan\alpha -x_{n}=0$, 
which obviously intersect at $\left\lbrace x_{1}=x_{n}=0\right\rbrace$. 
Then we close the domain between the two hypersurfaces 
with $x_{1}<0$ if $0<\alpha <\frac{\pi}{2}$ 
and with $x_{1}>0$ if $\frac{\pi}{2}\leq\alpha\leq\pi$ 
with a smooth surface, 
in such a way that the scaled domain 
\be   \Sigma_{D} = D \Sigma ,     \label{domain}\ee
defined for a large number $D$, contains a sufficiently large cube. 
In $\Sigma_{D}$ we denote by $\Gamma_{D}$ the singularity, 
which lies on $\left\lbrace x_{1}=x_{n}=0\right\rbrace$. 
The following figure represents a section of the domain in the plane $x_{1}$, $x_{n}$.
\begin{center}
\includegraphics[scale=0.5]{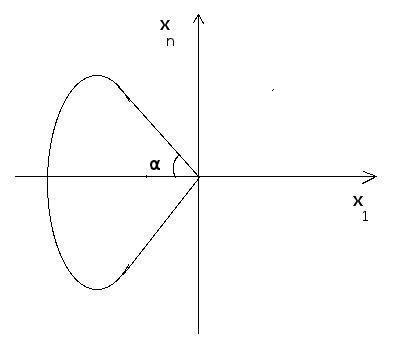} 
\end{center}
The advantage of dealing with this set is that if we solve a Dirichlet problem in $\Sigma_{D}$ 
with data even in $x_{n}$, then for suitable boundary conditions the solution in the upper part 
$\Sigma_{D}\cap\left\lbrace x_{n}>0\right\rbrace$ 
will be qualitatively similar to that of $(\ref{dirichlet})$.

\bigskip

Our next goal is to consider the following problem
\bern
\left\{ 
\begin{array}{ll} 
         -\Delta \tilde{\varphi} + \tilde{\varphi} = 0   \quad & \mathrm{in\ } \Sigma_{dD},      \\
           \tilde{\varphi} = U\left(\cdot - dQ_{0}\right) \quad     &    \mathrm{on\ } \partial\Sigma_{dD},  
\end{array} 
\right.         
 \label{dirichlet1}\eern      
where $Q_{0}=\left(-1, 0, \cdots, 0\right)$. 
By a scaling of variables, this problem is equivalent to 
\bern
\left\{ 
\begin{array}{ll} 
         -\frac{1}{d^{2}}\Delta \varphi + \varphi = 0   \quad & \mathrm{in\ } \Sigma_{D},      \\
           \varphi = U\left(d\left(\cdot -Q_{0}\right)\right)  \quad     &    \mathrm{on\ } \partial\Sigma_{D},  
\end{array} 
\right.         
 \label{varphi}\eern

\subsubsection{Asymptotic analysis of $(\ref{varphi})$} \label{sec:asympt}
First of all we need to know if $(\ref{varphi})$ is solvable. 
It follows from Lemma $3.1$ in \cite{GMMP1}; in fact, 
making a modification of some arguments in \cite{Gri}, 
they construct barrier functions for the operators 
$\Delta$ and $-\Delta +1$ at all boundary points of the set $\Sigma$. 
This guarantees, via the classical Perron method, the existence of a solution for the problem $(\ref{varphi})$.

If we consider the function $\phi =-\frac{1}{d}\log\varphi$, 
then $\phi$ satisfies 
\bern
\left\{ 
\begin{array}{ll} 
         \frac{1}{d}\Delta\phi - \vert\nabla\phi\vert ^{2} + 1 = 0   \quad & \mathrm{in\ } \Sigma_{D},      \\
        \phi =  -\frac{1}{d}\log\left(U\left(d\left(\cdot -Q_{0}\right) \right) \right)  \quad  &  \mathrm{on\ } \partial\Sigma_{D}.    
 \end{array} 
\right.         
 \label{logphi}\eern 
Using the limit behavior of the function $U$ given by $(\ref{limU})$, 
it is easy to show the following: 
\begin{lemma} 
For any fixed constant $D>0$ we have that 
\be    -\frac{1}{d}\log\left(U\left(d\left(\cdot -Q_{0}\right) \right) \right)\rightarrow \vert \cdot -Q_{0}\vert     \qquad   \mathrm{uniformly\ on\ } \partial\Sigma_{D}   \label{bordo}\ee
as $d\rightarrow +\infty$. 
\label{lem:bordo} 
\end{lemma}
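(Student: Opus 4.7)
The plan is to reduce the statement to a direct application of the sharp far-field asymptotics of $U$ stated in $(\ref{limU})$ after taking logarithms.

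First, I would check that, for $D$ sufficiently large, the point $Q_{0}=\left(-1,0,\ldots ,0\right)$ lies in the interior of $\Sigma_{D}$. This follows from the construction of $\Sigma$ (the two hyperplanes $x_{1}\tan\alpha\pm x_{n}=0$ meet transversally at the origin, and for $0<\alpha\leq\pi$ the domain opens in the $-x_{1}$ direction or contains a large neighborhood of it) together with the fact that $\Sigma_{D}=D\Sigma$ contains an arbitrarily large cube once $D$ is chosen large enough. Consequently the continuous function $x\mapsto\vert x-Q_{0}\vert$ attains a positive minimum $m_{D}>0$ and a finite maximum $M_{D}<+\infty$ on the compact set $\partial\Sigma_{D}$. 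In particular $d\,\vert x-Q_{0}\vert\to +\infty$ as $d\to +\infty$, uniformly in $x\in\partial\Sigma_{D}$.

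Next, I would rewrite $(\ref{limU})$ in logarithmic form: as $r\to +\infty$,
\[
\log U\left(r\right)=-r-\frac{n-2}{2}\log r+\log c_{n,p}+o\left(1\right).
\]
Since $U$ is radial, $U\bigl(d\left(x-Q_{0}\right)\bigr)=U\bigl(d\vert x-Q_{0}\vert\bigr)$. Setting $r=d\vert x-Q_{0}\vert$ and dividing by $-d$, I obtain, for every $x\in\partial\Sigma_{D}$,
\[
-\frac{1}{d}\log U\bigl(d\left(x-Q_{0}\right)\bigr)=\vert x-Q_{0}\vert+\frac{n-2}{2d}\log\bigl(d\vert x-Q_{0}\vert\bigr)-\frac{\log c_{n,p}}{d}+\frac{o\left(1\right)}{d}.
\]

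Finally, I would argue that the three correction terms vanish uniformly on $\partial\Sigma_{D}$ as $d\to +\infty$: the logarithmic term is controlled by $\frac{n-2}{2d}\log\left(dM_{D}\right)\to 0$ because $M_{D}$ is fixed, the constant term trivially goes to $0$, and the $o\left(1\right)$ term is uniform because $d\,\vert x-Q_{0}\vert\geq dm_{D}\to +\infty$ uniformly. This yields the uniform convergence $(\ref{bordo})$. The argument is really just a careful application of the known asymptotics of $U$; the only point that requires minor attention is the uniformity, which is handled by the two-sided bound $0<m_{D}\leq\vert x-Q_{0}\vert\leq M_{D}$ on the compact boundary $\partial\Sigma_{D}$, so I do not anticipate any substantial obstacle.
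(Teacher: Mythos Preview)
Your proposal is correct and is exactly the argument the paper has in mind: the paper does not give a detailed proof but simply states that the lemma follows from the asymptotic behavior $(\ref{limU})$ of $U$, and your computation via the logarithmic expansion $\log U(r)=-r-\tfrac{n-2}{2}\log r+\log c_{n,p}+o(1)$ together with the two-sided bound $0<m_{D}\leq\vert x-Q_{0}\vert\leq M_{D}$ on the compact boundary is precisely how one makes this rigorous. The only cosmetic point is that the lemma is stated for ``any fixed $D>0$'' while you restrict to $D$ large enough that $Q_{0}\in\Sigma_{D}$; in the paper $D$ is always taken large (cf.\ Lemma~\ref{lem:stima} where $D>1$ and the construction ensuring $\Sigma_{D}$ contains a large cube), so this is consistent with the intended use.
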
 
Since Lemma $\ref{lem:bordo}$ states that the boundary datum is everywhere close to the function $\vert x-Q_{0}\vert$, 
it is useful to consider the following auxiliary problem
\bern
\left\{ 
\begin{array}{ll} 
         \frac{1}{d}\Delta\phi - \vert\nabla\phi\vert ^{2} + 1 = 0   \quad & \mathrm{in\ } \Sigma_{D},      \\
        \phi = \vert x-Q_{0}\vert  \quad  &  \mathrm{on\ } \partial\Sigma_{D}.    
 \end{array} 
\right.         
 \label{phi}\eern 

\begin{lemma} 
Let $D>1$ be a fixed constant. 
Then, when $d\rightarrow\infty$, problem $(\ref{phi})$ has a unique solution $\phi ^{d}$, 
which is everywhere positive, and which more precisely satisfies the estimates 
\be     \frac{\tan\alpha}{\sqrt{\tan^{2}\alpha +1}}<\phi^{d}\left(x\right)<C    \qquad \mathrm{in\ }   \Sigma_{D}, 
\label{stima}\ee
if $0<\alpha<\frac{\pi}{2}$, and 
\be     1<\phi^{d}\left(x\right)<C    \qquad \mathrm{in\ }   \Sigma_{D}, 
\label{stima'}\ee
if $\frac{\pi}{2}\leq\alpha\leq\pi$, 
where $C$ depends only on $D$ and $\Sigma$.
\label{lem:stima}\end{lemma}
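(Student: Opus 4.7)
The plan is to reduce $(\ref{phi})$ to the linear elliptic problem $(\ref{varphi})$ via the Hopf--Cole substitution $\varphi = e^{-d\phi}$ already used to pass from $(\ref{varphi})$ to $(\ref{logphi})$. A direct computation shows that $\phi$ is a classical solution of $(\ref{phi})$ if and only if $\varphi$ is a classical positive solution of
$$-\frac{1}{d^{2}}\Delta\varphi + \varphi = 0 \quad \text{in } \Sigma_{D}, \qquad \varphi = e^{-d|x - Q_{0}|} \quad \text{on } \partial\Sigma_{D},$$
which is exactly $(\ref{varphi})$. All the assertions of the lemma can then be extracted from this linear problem and translated back through $\phi^{d} = -\frac{1}{d}\log\varphi$.

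For existence I would apply Perron's method to the linear equation, using the barriers for $-\Delta + 1$ at every point of $\partial\Sigma_{D}$ constructed in Lemma $3.1$ of \cite{GMMP1} (which rescale to give barriers for $-\frac{1}{d^{2}}\Delta + \mathrm{id}$). Uniqueness is immediate from the weak maximum principle, and $\varphi > 0$ in $\overline{\Sigma_{D}}$ follows from the strong maximum principle because the boundary datum is strictly positive. Applying the weak maximum principle to $\varphi$ and $-\varphi$ gives $\min_{\partial\Sigma_{D}}\varphi \le \varphi(x) \le \max_{\partial\Sigma_{D}}\varphi$ on $\overline{\Sigma_{D}}$, with strict inequalities in the interior by the strong maximum principle (the datum is not constant). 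Taking $-\frac{1}{d}\log$ reverses the inequalities:
$$\min_{\partial\Sigma_{D}}|x - Q_{0}| < \phi^{d}(x) < \max_{\partial\Sigma_{D}}|x - Q_{0}| \quad \text{in } \Sigma_{D},$$
and the upper bound in $(\ref{stima})$--$(\ref{stima'})$ is immediate since $\Sigma_{D}$ is bounded, so the right-hand maximum is controlled by a constant depending only on $D$ and $\Sigma$.

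It remains to compute $L := \min_{\partial\Sigma_{D}}|x - Q_{0}|$, which is the geometric heart of the proof. For $D$ sufficiently large, the smooth closing cap lies at distance much greater than $1$ from $Q_{0} = (-1, 0, \ldots, 0)$ and does not compete, so $L$ is realised on one of the two Dirichlet hyperplanes $\{x_{1}\tan\alpha \pm x_{n} = 0\}$. The point-to-hyperplane distance formula yields $|\tan\alpha|/\sqrt{\tan^{2}\alpha + 1}$, and a direct calculation places the foot of the perpendicular from $Q_{0}$ at $(-\cos^{2}\alpha, 0, \ldots, 0, \pm\sin\alpha\cos\alpha)$. For $0 < \alpha < \pi/2$ this foot lies on the Dirichlet portion of $\partial\Sigma_{D}$ (both coordinates have the correct signs with respect to the wedge geometry described before $(\ref{domain})$), so $L = \tan\alpha/\sqrt{\tan^{2}\alpha + 1}$ and $(\ref{stima})$ follows. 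For $\pi/2 \le \alpha \le \pi$ the foot instead falls on the side of the singularity that is closed off by the smooth cap, so on the Dirichlet walls themselves the infimum of $|x - Q_{0}|$ is attained at the corner $\{x_{1} = x_{n} = 0\}$ at distance $1$ from $Q_{0}$, yielding $(\ref{stima'})$. The main obstacle is precisely this last piece of geometric bookkeeping: one must verify, case by case in $\alpha$, which sectors of the Dirichlet hyperplanes actually form part of $\partial\Sigma_{D}$ rather than being replaced by the smooth cap, and then identify the minimising point accordingly. Everything else is a textbook Perron--maximum principle argument applied to the linear equation for $\varphi$.
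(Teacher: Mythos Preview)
Your reduction to the linear equation via $\varphi = e^{-d\phi}$ is fine (modulo the harmless imprecision that the resulting boundary datum is $e^{-d|x-Q_{0}|}$, not $U(d(\cdot - Q_{0}))$, so this is not literally $(\ref{varphi})$ but an analogous problem). The existence/uniqueness argument and the computation of $\min_{\partial\Sigma_{D}}|x-Q_{0}|$ are also correct, and your route to the \emph{lower} bound on $\phi^{d}$ is essentially the paper's: the maximum principle for $-\Delta + d^{2}$ does give $\varphi \le \max_{\partial\Sigma_{D}}\varphi$, hence $\phi^{d} \ge \min_{\partial\Sigma_{D}}|x-Q_{0}|$.

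The gap is in the \emph{upper} bound on $\phi^{d}$. You claim that applying the weak maximum principle to $-\varphi$ yields $\varphi \ge \min_{\partial\Sigma_{D}}\varphi$, but this is false for operators with a positive zeroth-order term. For $-\Delta\varphi + d^{2}\varphi = 0$ the maximum principle applied to $-\varphi$ only gives $\sup(-\varphi)\le \sup_{\partial}(-\varphi)^{+}=0$, i.e.\ $\varphi\ge 0$; it says nothing about $\varphi$ staying above the boundary minimum. Indeed, since $\Delta\varphi = d^{2}\varphi > 0$, the solution is subharmonic and genuinely decays in the interior as $d\to\infty$ (this is precisely what makes $\phi^{d}\to\overline{\phi}$ in Lemma~\ref{lem:viscosity} nontrivial). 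So $\phi^{d}\le \max_{\partial\Sigma_{D}}|x-Q_{0}|$ does not follow, and you still need a supersolution for $(\ref{phi})$ that is bounded independently of $d$. The paper supplies one explicitly: $\phi^{d}_{+}(x)=C+x_{1}$ satisfies $\frac{1}{d}\Delta\phi^{d}_{+}-|\nabla\phi^{d}_{+}|^{2}+1=0$ exactly and dominates the boundary data for $C$ large (depending only on $D$ and $\Sigma$), whence $\phi^{d}\le C+x_{1}\le C'$ by comparison. Equivalently, in your linear picture, $e^{-d(C+x_{1})}$ is a positive solution of the same linear equation lying below the boundary datum, hence below $\varphi$. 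Either way, this step is not a direct consequence of the weak maximum principle and has to be inserted.
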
 

\begin{proof} 
Applying the transformation inverse to the one at the beginning of this subsection 
and using the existence of barrier functions for the operator $-\Delta +1$, as shown in \cite{GMMP1}, Lemma $3.1$, 
we get existence. 
Uniqueness and positivity of $\phi ^{d}$ follows from the maximum principle. 

To prove the estimates $(\ref{stima})$ and $(\ref{stima'})$, we can reason as in \cite{GMMP1}, Lemma $3.4$, or 
in \cite{NW}, Lemma $4.2$. 
In the case $0<\alpha<\frac{\pi}{2}$, 
we have that $\phi^{d}_{-}\left(x\right)\equiv\frac{\tan\alpha}{\sqrt{\tan^{2}\alpha +1}}$ in $\Sigma_{D}$ 
is a subsolution to $(\ref{phi})$, since $dist\left(Q_{0}, \partial\Sigma_{D}\right)=\frac{\tan\alpha}{\sqrt{\tan^{2}\alpha +1}}$; 
whereas, in the case $\frac{\pi}{2}\leq\alpha\leq\pi$, 
we have that $dist\left(Q_{0}, \partial\Sigma_{D}\right)=1$, 
and then the subsolution is given by $\phi^{d}_{-}\left(x\right)\equiv 1$. 
Moreover, in both the two cases, the function $\phi^{d}_{+}\left(x\right)=C+x_{1}$ is a supersolution for $C$ sufficiently large. 
Then our claim follows.  
\end{proof}

We next show some pointwise bounds on $\phi^{d}$, which in particular imply 
a control on the gradient within some region in the boundary of $\Sigma_{D}$. 
We obtain gradient bounds only near smooth parts of the boundary, away from the singularity $\Gamma_{D}$. 

\begin{lemma} 
Let $D>1$ be as in Lemma $\ref{lem:stima}$. 
Then, there exists a constant $C>0$ such that for any $\sigma>0$ sufficiently small 
there exist $\bar{\delta}>0$ and $d_{\sigma}>0$ so large that
\be     \vert \phi^{d}\left(x\right) -  \phi^{d}\left(z_{x}\right)\vert  \leq   C  \vert x- z_{x} \vert ,   \qquad   z_{x}\in\partial\Sigma_{D}, 
           dist\left(z_{x}, D\Gamma_{D}\right)\geq \sigma ,  \vert x- z_{x} \vert\leq\bar{\delta},  d\geq d_{\sigma}.   
\nonumber\ee
In the above formula $z_{x}$ denotes the point in $\partial\Sigma_{D}$ closest to $x$. 
\label{lem:stima1}\end{lemma}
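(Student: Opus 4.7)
The strategy is to build local sub- and super-solution barriers in a small half-ball centered at $z_x$, and to apply a comparison principle to the Hamilton--Jacobi equation in (\ref{logphi}), following the line of Lemma~$3.5$ in \cite{GMMP1} and Lemma~$4.3$ in \cite{NW}.

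Fix $\sigma>0$ and $z_x\in\partial\Sigma_D$ with $\mathrm{dist}(z_x,D\Gamma_D)\geq\sigma$. Since $z_x$ is separated from the singularity, $\partial\Sigma_D$ is smooth in a neighborhood $B_{\rho_\sigma}(z_x)$ with $\rho_\sigma=\rho(\sigma)>0$. Moreover $Q_0\notin\overline{\Sigma_D}$ stays at a uniform positive distance from $z_x$, so the boundary datum $g(x):=|x-Q_0|$ is smooth and $1$-Lipschitz on $B_{\rho_\sigma}(z_x)$. I would first flatten the boundary by a smooth diffeomorphism sending $z_x\mapsto 0$ and $\Sigma_D\cap B_{\rho_\sigma}(z_x)$ onto $\{y_n>0\}\cap B_{\rho_\sigma}$; in these coordinates equation (\ref{logphi}) keeps the same form up to smooth metric perturbations that are $O(|y|)$-close to the Euclidean ones.

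Next, for constants $A>1$ and $B>0$ to be chosen, introduce
\[
\psi_{\pm}(y):=g(z_x)+\nabla_{y'}g(z_x)\cdot y'\;\pm\;A\,y_n\;+\;B\,|y|^2,
\]
and verify, on the half-ball $B_{\bar\delta}^{+}:=B_{\bar\delta}\cap\{y_n>0\}$ for $\bar\delta$ small depending on $\sigma$, three facts. First, $\psi_+$ is a classical supersolution and $\psi_-$ a classical subsolution of the flattened equation: since $|\nabla\psi_\pm|^2=A^2+O(|y|)$ and $\tfrac{1}{d}\Delta\psi_\pm=O(B/d)$, for $d\geq d_\sigma$ large enough one has $\tfrac{1}{d}\Delta\psi_\pm-|\nabla\psi_\pm|^2+1\leq 1-A^2+O(\bar\delta)+O(B/d)<0$ once $A>1$ is fixed. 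Second, on the flat part $\{y_n=0\}\cap B_{\bar\delta}$ the inequalities $\psi_+\geq g\geq\psi_-$ follow from the second-order Taylor expansion of $g$ at $z_x$, by choosing $B$ larger than $\|D^2 g\|_{L^\infty(B_{\bar\delta})}$. Third, on the spherical portion $\partial B_{\bar\delta}\cap\{y_n>0\}$ the normal contribution $\pm A\bar\delta$ dominates the oscillation of $\phi^d$, thanks to the uniform $L^\infty$ bound in Lemma~\ref{lem:stima}.

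With these in hand, the difference $w:=\psi_+-\phi^d$ satisfies a linear elliptic inequality of the form $\tfrac{1}{d}\Delta w-(\nabla\psi_++\nabla\phi^d)\cdot\nabla w\leq 0$ with bounded drift, so the classical maximum principle gives $w\geq 0$ on $B_{\bar\delta}^{+}$, and analogously $\psi_-\leq\phi^d$. Reading the barriers at the point $y$ corresponding to $x$ and using $\phi^d(z_x)=g(z_x)$ yields $|\phi^d(x)-\phi^d(z_x)|\leq(|\nabla g(z_x)|+A+B\bar\delta)|x-z_x|\leq C|x-z_x|$. The main technical obstacle is the verification of the supersolution/subsolution conditions uniformly in $d$ and in $z_x$: the viscosity term $\tfrac{1}{d}\Delta\psi_\pm$ and the smooth-metric corrections coming from the flattening diffeomorphism must both stay within the margin $A^2-1$ opened by the nonlinearity. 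This is arranged by fixing $\bar\delta=\bar\delta(\sigma)$ small first and then taking $d_\sigma$ sufficiently large; the resulting Lipschitz constant $C$ depends only on the global geometry of $\Sigma$ and on the $L^\infty$ bound of Lemma~\ref{lem:stima}, not on $\sigma$.
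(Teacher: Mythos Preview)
Your barrier strategy is sound in spirit, but the lower barrier $\psi_-$ does not do what you claim. You compute $\frac{1}{d}\Delta\psi_\pm-|\nabla\psi_\pm|^2+1\le 1-A^2+O(\bar\delta)+O(B/d)<0$ for \emph{both} signs once $A>1$; but for the comparison $w=\psi-\phi^d$, the inequality $L\psi\le 0$ produces an \emph{upper} barrier, not a lower one. Indeed $|\nabla\psi_-|^2=|\nabla_{y'}g(z_x)|^2+A^2+O(|y|)>1$ whenever $A>1$ (you have also dropped the tangential term $|\nabla_{y'}g|^2$ in your formula), so no choice of $B$ makes $\psi_-$ a subsolution of the Hamilton--Jacobi equation. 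The paper avoids this by taking as subsolution the \emph{global} function $\phi^d_-(x)=|x-Q_0|$: it has $|\nabla\phi^d_-|=1$ exactly and $\frac{1}{d}\Delta\phi^d_-=\frac{n-1}{d|x-Q_0|}>0$, hence $L\phi^d_-\ge 0$, and since it matches the boundary datum one gets at once $\phi^d(x)\ge |x-Q_0|\ge |z_x-Q_0|-|x-z_x|=\phi^d(z_x)-|x-z_x|$.

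There is a second, more structural, difficulty in your third step. On the curved portion $\partial B_{\bar\delta}\cap\{y_n>0\}$ the ``normal contribution'' is $Ay_n$, not $A\bar\delta$; near the equator $y_n\to 0$ and this term vanishes, so $\psi_+\ge\phi^d$ cannot be forced by the $L^\infty$ bound of Lemma~\ref{lem:stima} without letting $A$ (or $B$) blow up like $1/\bar\delta$, which spoils the uniformity you assert in your last sentence. The paper circumvents this by working not on a half-ball but on the full tubular strip $\{0<\mathrm{dist}(\cdot,\partial\Sigma_D)<\bar\delta\}$: its only interior boundary sits at distance exactly $\bar\delta$, where the linear term $M\bar\delta$ in $\hat\phi^d_+=\min(\phi_1,\phi_2)$ genuinely dominates $\sup\phi^d$. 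Because the strip is singular near $\Gamma_D$ and the minimum is not smooth, the paper then regularizes $\hat\phi^d_+$ by a variable-radius mollification and checks that this preserves the supersolution property. Your local approach can be repaired along the same lines, but not with the specific barriers you wrote down.
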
 

\begin{proof} 
Let us first consider the case $0<\alpha<\frac{\pi}{2}$. 
Let us fix $\sigma>0$ small and consider, for every $0<\delta<\bar{\delta}=\sigma\tan\alpha$, 
the points $x\in\Sigma_{D}$ of the form $z+\delta\nu\left(z\right)$, 
where $z\in\partial\Sigma_{D}$ and $\nu\left(z\right)$ is the inner unit normal at $z$. 
Note that there is no problem in the representation of $x$ if $dist\left(z, D\Gamma_{D}\right)\geq\sigma$; 
whereas if $dist\left(z, D\Gamma_{D}\right)<\sigma$, we follow the inner normal direction 
given by $\nu\left(z\right)$ and stop at $x_{n}=0$ if we reach this hyperplane at a distance 
from the boundary smaller than $\bar{\delta}$. 
Let us call $\Lambda_{\delta}$ this set of points $x\in\Sigma_{D}$ 
at distance $\delta$ from the boundary. 
Note that the $\Lambda_{\delta}$'s are all disjoint as $\delta$ varies in $\left[0, \bar{\delta}\right]$. 
Now in $\Lambda_{\delta}$ we can define the functions
\bern    \phi_{1}\left(x\right)=\vert z_{1}\left(x\right)-Q_{0}\vert  + M\delta_{1}\left(x\right), \nonumber\\
             \phi_{2}\left(x\right)=\vert z_{2}\left(x\right)-Q_{0}\vert  + M\delta_{2}\left(x\right), \nonumber
\eern
where $z_{1}\left(x\right)$, $z_{2}\left(x\right)$ are the points in $\partial\Sigma_{D}$ closest to $x$ 
with the $n$-th coordinate respectively positive and negative; $\delta_{1}\left(x\right)$, $\delta_{2}\left(x\right)$ 
give the distance of $x$ from $z_{1}\left(x\right)$, $z_{2}\left(x\right)$. 
If we set 
\be     \hat{\phi}^{d}_{+}\left(x\right)=\min\left\lbrace\phi_{1}\left(x\right), \phi_{2}\left(x\right)\right\rbrace ,  \nonumber\ee
we choose the constant $M$ so large that $\hat{\phi}^{d}_{+}\left(x\right)>\phi^{d}\left(x\right)$ 
when $x\in\left\lbrace z+\bar{\delta}\nu\left(z\right):z\in\partial\Sigma_{D} \right\rbrace$. 
The existence of such constant $M$ is guaranteed by Lemma $\ref{lem:stima}$. 

Next we consider a smooth function $\rho\in\mathcal{C}^{\infty}_{0}\left(\R^{n}\right)$, 
such that $supp\rho\subset B_{1}\left(0\right)$, and $\int_{\R^{n}}\rho\left(x\right)dx=1$. 
Moreover we define the function 
\be   \lambda\left(x\right)=-\frac{2}{\bar{\delta}}\delta^{2}\left(x\right)+2\delta\left(x\right), \qquad   
\mathrm{for\ } x\in\left\lbrace z+\delta\nu\left(z\right): z\in\partial\Sigma_{D}, \delta\in\left[0, \bar{\delta}\right] \right\rbrace .     
\nonumber\ee
Then we construct a mollifiers 
\be    \rho_{\lambda\left(x\right)}\left(y\right)=\frac{1}{\lambda^{n}\left(x\right)}\rho\left(\frac{y}{\lambda\left(x\right)}\right),   \label{mollifier} \ee 
in such a way that the support of each $\rho_{\lambda\left(x\right)}$ depends on the point $x$, 
and, in particular, it shrinks to a point when we are close to the boundary. 

Finally we regularize $\hat{\phi}^{d}_{+}$ using the convolution with the mollifiers defined in $(\ref{mollifier})$. 
Then we obtain the following smooth function
\be   \phi^{d}_{+}\left(x\right) = \left(\hat{\phi}^{d}_{+}\ast\rho_{\lambda\left(\cdot\right)}\right)\left(x\right) 
                     =  \int_{\R^{n}}  \hat{\phi}^{d}_{+}\left(x-y\right)  \rho_{\lambda\left(x\right)}\left(y\right)  dy.   \nonumber\ee 
It is easy to see that, for $i=1,\ldots,n-1$,  
\bern          \frac{\partial\phi_{1}}{\partial x_{i}}  &<&   o\left(1\right) -\frac{1}{C}M,   \label{fi1}\\
                    \frac{\partial\phi_{2}}{\partial x_{i}} &<&   o\left(1\right) -\frac{1}{C}M.    \label{fi2} 
\eern
Moreover, using $(\ref{fi1})$ and $(\ref{fi2})$, we have that 
\bern 
     \frac{\partial\phi^{d}_{+}}{\partial x_{i}}  &=&  \int_{\R^{n}}\frac{\partial\hat{\phi}^{d}_{+}}{\partial x_{i}}\left(x-y\right)\rho_{\lambda\left(x\right)}\left(y\right)dy 
      +   \int_{\R^{n}}\hat{\phi}^{d}_{+}\left(x-y\right)\frac{\partial\rho_{\lambda\left(x\right)}}{\partial\lambda}\left(y\right) 
      \frac{\partial\lambda}{\partial x_{i}}\left(x\right)  dy   \nonumber\\
       &\leq &    o\left(1\right) -\frac{1}{C}M  +    
                 \int_{\R^{n}}\hat{\phi}^{d}_{+}\left(x-y\right)\frac{\partial\rho_{\lambda\left(x\right)}}{\partial\lambda}\left(y\right) 
      \frac{\partial\lambda}{\partial x_{i}}\left(x\right)  dy. 
\label{partial}\eern 
Now we need an estimate for the last term in $(\ref{partial})$, let us call it $A$. 
If we add and subtract $\hat{\phi}^{d}_{+}\left(x\right)$ in the integral, we obtain
\bern   
    A    &=&     \int_{\R^{n}}\hat{\phi}^{d}_{+}\left(x\right)\frac{\partial\rho_{\lambda\left(x\right)}}{\partial\lambda}\left(y\right) 
      \frac{\partial\lambda}{\partial x_{i}}\left(x\right)  dy   +   
             \int_{\R^{n}}\left[\hat{\phi}^{d}_{+}\left(x-y\right)-\hat{\phi}^{d}_{+}\left(x\right)\right] \frac{\partial\rho_{\lambda\left(x\right)}}{\partial\lambda}\left(y\right)\frac{\partial\lambda}{\partial x_{i}}\left(x\right)  dy  \nonumber\\
          &=&  \frac{\partial\lambda}{\partial x_{i}}\left(x\right) \int_{\R^{n}}\left[\hat{\phi}^{d}_{+}\left(x-y\right)-\hat{\phi}^{d}_{+}\left(x\right)\right] \frac{\partial\rho_{\lambda\left(x\right)}}{\partial\lambda}\left(y\right)  dy;      \nonumber\eern
in the last step we have used the fact that $\hat{\phi}^{d}_{+}\left(x\right)$ and 
$\frac{\partial\lambda}{\partial x_{i}}\left(x\right)$ do not depend on $y$, 
and the fact that $\int_{\R^{n}}\frac{\partial\rho_{\lambda\left(x\right)}}{\partial\lambda}\left(y\right)dy=\frac{\partial}{\partial\lambda}\int_{\R^{n}}\rho_{\lambda\left(x\right)}\left(y\right)dy=0$, 
since $\int_{\R^{n}}\rho_{\lambda\left(x\right)}\left(y\right)dy=1$, for every $\lambda>0$. 
Now, from $(\ref{mollifier})$, a simple computation yields 
\be       \frac{\partial\rho_{\lambda\left(x\right)}}{\partial\lambda}\left(y\right) =   
               -n\lambda^{-n-1}\left(x\right)\rho\left(\frac{y}{\lambda\left(x\right)}\right)   -
                    \lambda^{-n-2}\left(x\right)y \nabla\rho\left(\frac{y}{\lambda\left(x\right)}\right).    \nonumber\ee
Then, using the fact that $\frac{\partial\lambda}{\partial x_{i}}\left(x\right)\simeq -Cx_{i}$, for some positive constant $C$, 
and making the change of variable $y=\lambda\left(x\right)z$, we have
\be    A   = C\lambda^{-1}\left(x\right)x_{i}  
                \int_{\R^{n}}\left[\hat{\phi}^{d}_{+}\left(x-\lambda\left(x\right)z\right)-\hat{\phi}^{d}_{+}\left(x\right)\right]    \cdot 
                  \left[\rho\left(z\right) + z\nabla\rho\left(z\right)\right] dz.       \label{partial2}\ee
Since $\hat{\phi}^{d}_{+}$ is a Lipschitz function, from $(\ref{partial2})$ we get that 
\be      A \leq   C x_{i} \int_{\R^{n}}   \vert z\vert \cdot  \left[\rho\left(z\right) + z\nabla\rho\left(z\right)\right] dz,      \nonumber\ee
and then $A\leq o\left(1\right)$. 
It follows that, for $M$ sufficiently large, the norm of $\nabla\phi^{d}_{+}$ 
can be arbitrarily big on its domain. 
By $(\ref{stima})$, if $M$ is large then $\phi^{d}_{+}$ is everywhere bigger than 
$\phi^{d}$ on $\Sigma_{D}\cap\left\lbrace dist\left(\cdot , \partial\Sigma_{D}\right)=\bar{\delta}\right\rbrace$, 
so $\phi^{d}_{+}$ is a supersolution of $(\ref{phi})$ in 
$\Sigma_{D}\cap\left\lbrace dist\left(\cdot , \partial\Sigma_{D}\right)<\bar{\delta}\right\rbrace$. 

On the other hand, we claim that the function $\phi^{d}_{-}\left(x\right)=\vert x-Q_{0}\vert$ 
is a subsolution of $(\ref{phi})$ in $\Sigma_{D}\cap\left\lbrace dist\left(\cdot , \partial\Sigma_{D}\right)<\bar{\delta}\right\rbrace$. 
In fact, if we consider the set $\Sigma_{D}\setminus B_{\tilde{\delta}\left(d\right)}\left(Q_{0}\right)$, 
where $\tilde{\delta}\left(d\right)$ is a small positive number depending on $d$, 
we can see by easy computation that here $\phi^{d}_{-}$ satisfies 
\be     \frac{1}{d}\Delta\phi^{d}_{-} - \vert\nabla\phi^{d}_{-}\vert ^{2} + 1 =   \frac{n-1}{d\vert x-Q_{0}\vert}.    \nonumber\ee
Moreover, since $\phi^{d}$ is positive, we can choose $\tilde{\delta}\left(d\right)$ sufficiently small 
so that $\phi^{d}_{-}<\phi^{d}$. 
Hence we obtain that $\phi^{d}_{-}\leq\phi^{d}$ in the closure of 
$\Sigma_{D}\cap\left\lbrace dist\left(\cdot , \partial\Sigma_{D}\right)<\bar{\delta}\right\rbrace$. 

Finally, the conclusion follows from the fact that $\phi^{d}_{-}$ and $\phi^{d}_{+}$ 
coincide on the set 
\be   \left\lbrace x\in\partial\Sigma_{D}:dist\left(x, D\Gamma_{D}\right)\geq\sigma\right\rbrace \nonumber\ee 
and that we have uniform bounds on the gradient here, independently on $d$. 

In the case $\frac{\pi}{2}\leq\alpha\leq\pi$, we can repeat essentially the same construction 
of the proof of Lemma $3.5$ in \cite{GMMP1} and obtain the same conclusion.   
\end{proof}

Using the same arguments as in Lemma $3.6$ in \cite{GMMP1} 
we are able to extend the gradient estimate which follows from the previous lemma to 
a subset of the interior of the domain. 

\begin{lemma} 
Let $D>1$ be as in Lemma $\ref{lem:stima}$. 
Then, there exists a constant $C>0$ such that for any $\sigma>0$ sufficiently small 
there exists $d_{\sigma}>0$ so large that
\be     \vert\nabla\phi^{d}\left(x\right)\vert  \leq   C   \qquad  \mathrm{in\ } \left\lbrace  x \in\overline{\Sigma}_{D}: 
           dist\left(x, D\Gamma_{D}\right)\geq \sigma \right\rbrace , \quad   d\geq d_{\sigma}.   
\label{stimaFi}
\ee
\label{lem:stima2}\end{lemma}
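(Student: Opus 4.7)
The plan is to adapt the Bernstein technique combined with the maximum principle, following the argument of Lemma~$3.6$ in \cite{GMMP1}. Setting $v=|\nabla\phi^d|^2$, the first step is to derive a linear elliptic inequality for $v$. Differentiating the PDE $\frac{1}{d}\Delta\phi^d = |\nabla\phi^d|^2-1$ from (\ref{phi}) in the direction $x_k$ yields $\frac{1}{d}\Delta\phi^d_k = 2\nabla\phi^d\cdot\nabla\phi^d_k$; multiplying by $\phi^d_k$, summing over $k$ and using $\Delta v=2|\nabla^2\phi^d|^2+2\sum_k\phi^d_k\Delta\phi^d_k$ produces the Bernstein identity
\[
\Delta v - 2d\,\nabla\phi^d\cdot\nabla v \;=\; 2\,|\nabla^2\phi^d|^2 \;\geq\; 0.
\]
Hence $v$ is a subsolution of the linear drift-diffusion operator $L=\Delta-2d\,\nabla\phi^d\cdot\nabla$, and the weak maximum principle gives $\sup_{\Omega'}v\leq\sup_{\partial\Omega'}v$ on any subdomain $\Omega'\subset\Sigma_D$.

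The second step is to apply this to a region of the form $\Omega_\sigma = \Sigma_D\cap\{\operatorname{dist}(\cdot,D\Gamma_D)>\sigma/2\}$ and control $v$ on $\partial\Omega_\sigma$. This boundary decomposes into the smooth piece $\partial\Omega_\sigma\cap\partial\Sigma_D$ and the artificial interior surface $S=\Sigma_D\cap\{\operatorname{dist}(\cdot,D\Gamma_D)=\sigma/2\}$. On the smooth piece, Lemma~\ref{lem:stima1} applied with parameter $\sigma/2$ provides a Lipschitz estimate of $\phi^d$ normal to $\partial\Sigma_D$ in a uniform tubular neighborhood of width $\bar\delta(\sigma/2)$; combined with the smoothness of the boundary datum $|x-Q_0|$ along $\partial\Sigma_D$ (which controls the tangential derivatives), this yields $|\nabla\phi^d|\leq C$, and hence $v\leq C$, on $\partial\Omega_\sigma\cap\partial\Sigma_D$ uniformly in $d$ large.

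The main obstacle is to control $v$ on the artificial interior surface $S$, where no direct a priori bound is available. I would address this via a barrier argument as in \cite{GMMP1}: exploit the uniform $L^\infty$ bounds on $\phi^d$ from Lemma~\ref{lem:stima} together with the boundary gradient estimate just obtained to build a supersolution of $L$ that dominates $v$ on $S$. A natural candidate is $\bar v = C_1(\phi^d)^2+C_2$, for which a direct computation using (\ref{phi}) gives $L\bar v = 2C_1 v(1-d\phi^d)-2C_1 d\phi^d$; since Lemma~\ref{lem:stima} bounds $\phi^d$ below by a positive constant, $L\bar v<0$ for $d$ sufficiently large. Combining such barriers with Lemma~\ref{lem:stima1} applied with progressively smaller parameters, so as to enlarge the tubular region in which the gradient is a priori controlled, allows one to enforce $\bar v\geq v$ after matching across the transition region near (but away from) $D\Gamma_D$. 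Once $v$ is controlled on the whole of $\partial\Omega_\sigma$, the maximum principle closes the argument and produces (\ref{stimaFi}) throughout $\Omega_\sigma$, hence a fortiori on the set $\{\operatorname{dist}(\cdot,D\Gamma_D)\geq\sigma\}$ of the statement.
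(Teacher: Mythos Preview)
Your overall strategy --- Bernstein's identity for $v=|\nabla\phi^d|^2$ followed by the maximum principle, with the boundary gradient coming from Lemma~\ref{lem:stima1} --- is exactly what the paper invokes by citing Lemma~3.6 of \cite{GMMP1}. The derivation of
\[
\Delta v - 2d\,\nabla\phi^d\cdot\nabla v \;=\; 2\,|\nabla^2\phi^d|^2 \;\geq\; 0
\]
is correct, and so is the observation that Lemma~\ref{lem:stima1} (normal Lipschitz bound) together with the $C^1$ boundary datum $|x-Q_0|$ (tangential bound) controls $v$ on $\partial\Sigma_D\cap\{\operatorname{dist}(\cdot,D\Gamma_D)\geq\sigma/2\}$ uniformly in $d$.

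The gap is in your treatment of the artificial surface $S$. Your candidate barrier $\bar v=C_1(\phi^d)^2+C_2$ does satisfy $L\bar v<0$ for $d$ large (your computation is right), but it is \emph{uniformly bounded} because $\phi^d$ is (Lemma~\ref{lem:stima}). To conclude $v\leq\bar v$ on $\Omega_\sigma$ you would need $v\leq\bar v$ on $\partial\Omega_\sigma$, hence in particular on $S$ --- and that is precisely the bound you are trying to establish. Nothing in your ``matching with progressively smaller parameters'' paragraph breaks this circularity: shrinking $\sigma$ in Lemma~\ref{lem:stima1} only pushes the Lipschitz control closer to $D\Gamma_D$ along the boundary; it never produces a bound on $|\nabla\phi^d|$ at the interior points of $S$.

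The clean way around this is to bypass the artificial boundary entirely by working with the \emph{linear} equation for $\varphi=e^{-d\phi^d}$. For any interior point $x$ with $\operatorname{dist}(x,\partial\Sigma_D)\geq 1/d$, rescaling $y\mapsto x+y/d$ turns $-\tfrac{1}{d^2}\Delta\varphi+\varphi=0$ into $-\Delta\tilde\varphi+\tilde\varphi=0$ on a unit ball; the standard gradient estimate plus Harnack give $|\nabla\varphi(x)|\leq Cd\,\varphi(x)$, i.e.\ $|\nabla\phi^d(x)|\leq C$ with $C$ independent of $d$. This covers all points of $\{\operatorname{dist}(\cdot,D\Gamma_D)\geq\sigma\}$ lying at distance $\geq 1/d$ from $\partial\Sigma_D$. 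The remaining strip of width $1/d$ along the smooth boundary is then handled by the Bernstein maximum principle on that strip: its boundary consists of the smooth piece of $\partial\Sigma_D$ (where $v\leq C$ by Lemma~\ref{lem:stima1}) and the inner surface at distance $1/d$ (where the interior estimate just obtained gives $v\leq C$). This is in the spirit of the arguments from \cite{LN} that the paper invokes later in Subsection~\ref{case 1}.
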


The next proposition is about the asymptotic behavior of the solutions 
of $(\ref{phi})$. 

\begin{lemma} 
Let $\phi^{d}$ be the solution of $(\ref{phi})$, then we have that
\be     \phi ^{d}\left(x\right) \rightarrow  \overline{\phi}\left(x\right):= \inf_{z\in\partial\Sigma_{D}}\left( \vert x-z\vert + \vert z-Q_{0}\vert\right),    \qquad \mathrm{as\ }   d\rightarrow\infty , \label{visc}\ee
uniformly on the compact sets of $\overline{\Sigma}_{D}$.
\label{lem:viscosity}\end{lemma}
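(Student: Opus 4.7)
The equation $(\ref{phi})$ is a viscous regularization of the eikonal equation $|\nabla\phi|^2=1$, and the right-hand side of $(\ref{visc})$ is precisely the Hopf--Lax representation of its viscosity solution with boundary datum $|x-Q_0|$. My plan is therefore a standard vanishing-viscosity argument: extract a subsequential limit of $\phi^d$ by compactness, show it is a viscosity solution of the eikonal equation, and invoke uniqueness to identify it with $\overline{\phi}$.

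First I would use Lemma \ref{lem:stima} and the uniform gradient bound of Lemma \ref{lem:stima2} to apply Arzelà--Ascoli on the sets $\{x\in\overline{\Sigma}_D:\mathrm{dist}(x,D\Gamma_D)\geq\sigma\}$ for each $\sigma>0$, and then take a diagonal subsequence $\phi^{d_k}$ converging locally uniformly on $\overline{\Sigma}_D\setminus D\Gamma_D$ to a Lipschitz limit $\phi^*$. The standard perturbed-test-function argument then identifies $\phi^*$ as a viscosity solution of $|\nabla\phi^*|^2=1$ in $\Sigma_D$: if $\phi^*-\varphi$ has a strict local maximum at $x_0\in\Sigma_D$ with $\varphi\in C^2$, there exist $x_k\to x_0$ where $\phi^{d_k}-\varphi$ attains a local maximum, so that $\nabla\phi^{d_k}(x_k)=\nabla\varphi(x_k)$ and $\Delta\phi^{d_k}(x_k)\leq\Delta\varphi(x_k)$; inserting this into $(\ref{phi})$ and letting $k\to\infty$ yields $|\nabla\varphi(x_0)|^2\leq 1$. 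The supersolution inequality is dual. The boundary value $\phi^*(z)=|z-Q_0|$ on $\partial\Sigma_D\setminus D\Gamma_D$ follows from Lemma \ref{lem:bordo} combined with the Lipschitz control of Lemma \ref{lem:stima1}.

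Next I would check directly that $\overline{\phi}$ is itself a viscosity solution of $|\nabla u|^2=1$ in $\Sigma_D$ with boundary datum $|x-Q_0|$: continuity and the $1$-Lipschitz property are routine from the infimum-convolution definition, and the viscosity sub- and super-solution inequalities can be verified by comparing with the affine function $x\mapsto|x-z_x^*|+|z_x^*-Q_0|$, where $z_x^*\in\partial\Sigma_D$ realizes the infimum at $x$. The comparison principle for the eikonal equation, applied on compact subsets of $\overline{\Sigma}_D\setminus D\Gamma_D$ and combined with the continuity of both $\phi^*$ and $\overline{\phi}$ up to $D\Gamma_D$, forces $\phi^*=\overline{\phi}$ there. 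Uniqueness of the limit upgrades the subsequential convergence to convergence of the full family.

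The main obstacle is propagating the locally uniform convergence across the singular set $D\Gamma_D$, where the gradient estimate of Lemma \ref{lem:stima2} degenerates. To handle this I would work on a thin tubular neighborhood $T_\sigma=\{\mathrm{dist}(\cdot,D\Gamma_D)<\sigma\}\cap\Sigma_D$: on $\partial T_\sigma\cap\Sigma_D$ the convergence $\phi^{d_k}\to\overline{\phi}$ is already uniform by the previous step, while on $\partial T_\sigma\cap\partial\Sigma_D$ the boundary data converge uniformly by Lemma \ref{lem:bordo}; comparing $\phi^{d_k}$ on $T_\sigma$ with $\overline{\phi}\pm\eta$, after a harmless smoothing designed to absorb the $\tfrac{1}{d}\Delta$ term (as in the subsolution construction of Lemma \ref{lem:stima}), and then sending $\sigma\to 0$ after $k\to\infty$, closes the gap. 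This is the only place where the non-smoothness of $\partial\Sigma_D$ enters in a non-trivial way, and it is the delicate point of the argument.
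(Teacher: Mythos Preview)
Your viscosity-solutions strategy is sound and would work, but it is genuinely different from the route the paper takes. The paper does \emph{not} invoke the comparison principle for the eikonal equation; instead it characterizes $\overline{\phi}$ directly as the maximal element of the class
\[
F=\{v\in W^{1,\infty}(\Sigma_D): v\leq |\cdot-Q_0|\text{ on }\partial\Sigma_D,\ |\nabla v|\leq 1\text{ a.e.}\},
\]
and then shows that any subsequential limit $\hat\phi$ of the $\phi^d$ coincides with this maximal element. The inequality $\hat\phi\leq\overline\phi$ is immediate since $\hat\phi\in F$. For the reverse inequality the paper takes an arbitrary $v\in F$, extends it across $\partial\Sigma_D$ along rays from $Q_0$ (a construction tailored to cope with the corner $D\Gamma_D$), mollifies to obtain a smooth $v_\lambda$ with $|\nabla v_\lambda|\leq 1+C\lambda$, and checks that a rescaling of $v_\lambda$ is a subsolution of $(\ref{phi})$; ordinary maximum-principle comparison with $\phi^d$ then gives $v\leq\hat\phi$ after $\lambda\to 0$.

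The trade-off is this: your approach is conceptually cleaner and identifies the limiting PDE explicitly, but it imports the comparison machinery for first-order equations and you still have to justify it on a domain with an edge---the point you flag as ``delicate'' is exactly where the black-box theorems need checking. The paper's argument is more hands-on but entirely self-contained: it never leaves the realm of classical maximum-principle comparison for the viscous problem itself, and the corner is handled once, in the ray-extension step, rather than through a separate tubular-neighborhood patching. Either route is acceptable; the paper's has the virtue that nothing beyond Lemmas~\ref{lem:stima}--\ref{lem:stima2} and elementary mollification is used.
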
 

\begin{proof} 
We will show $(\ref{visc})$ in two steps: 
\begin{itemize}
\item[1)] we prove that the function on the right-hand side of $(\ref{visc})$ is the supremum of all the elements of 
\be       \emph{F}  = \lbrace v\in\emph{W}^{1, \infty}\left(\Sigma_{D}\right)    :     
          v\left(x\right) \leq \vert x- Q_{0}\vert       \mathrm{\ on\ } \partial\Sigma_{D},   \vert\nabla v\vert\leq 1  \mathrm{\ a.e.\ in\ }   \Sigma_{D}  \rbrace ;    \nonumber\ee
\item[2)] we prove that for any sequence $d_{k}\rightarrow\infty$, there is a subsequence $d_{k_{l}}\rightarrow\infty$ 
such that $\phi ^{d_{k_{l}}}\rightarrow\overline{\phi}$ uniformly 
on the compact sets of $\overline{\Sigma}_{D}$ as $d_{k_{l}}\rightarrow\infty$. 
Then it follows that $\phi ^{d}\rightarrow\overline{\phi}$ uniformly 
on the compact sets of $\overline{\Sigma}_{D}$ as $d\rightarrow\infty$.
\end{itemize}   
We first prove $1)$. To begin we show that $\overline{\phi}\in\emph{F}$. 
If $x_{1}, x_{2}\in\Sigma_{D}$ and $z_{2}\in\partial\Sigma_{D}$ realizes the infimum for $x_{2}$, we have 
\be     \vert \overline{\phi}\left(x_{1}\right) - \overline{\phi}\left(x_{2}\right)\vert  
             \leq \vert \vert x_{1} - z_{2}\vert + \vert z_{2} - Q_{0}\vert - \vert x_{2} - z_{2}\vert - \vert z_{2} - Q_{0}\vert \vert 
             \leq \vert x_{1} - x_{2} \vert .    
\nonumber\ee
Then, taking $x_{1}, x_{2}$ close, we get $\overline{\phi}\in\emph{W}^{1, \infty}\left(\Sigma_{D}\right)$ 
and $\vert\nabla\overline{\phi}\vert\leq 1$ a. e. in $\Sigma_{D}$. 
Moreover, it is easy to see that $\overline{\phi}\left(x\right)=\vert x-Q_{0}\vert$ if $x\in\partial\Sigma_{D}$. 
We next show that $\overline{\phi}$ is the maximum element of $\emph{F}$. 
We construct a $\delta$ neighborhood $\Sigma ^{\delta}_{D}$ of $\Sigma_{D}$ in this way: 
consider $Q_{0}=\left(-1, 0, \cdots, 0\right)$ and, for every $z\in\partial\Sigma_{D}$, 
the line from $Q_{0}$ to $z$. 
If $\delta>0$ is small enough, each point $x$ in $\Sigma ^{\delta}_{D}\setminus\Sigma_{D}$ 
is uniquely determined by the equation $x=z+\overline{\delta}r\left(z\right)$, 
where $z\in\partial\Sigma_{D}$ is the intersection point of the line from $Q_{0}$ to $x$ 
with $\partial\Sigma_{D}$, $r\left(z\right)$ is the unit outer vector on the line, 
and $0<\overline{\delta}<\frac{\delta}{\cos\theta\left(z\right)}$; 
here $\theta\left(z\right)$ is the angle between $r\left(z\right)$ and the unit outer normal at $z$, $\nu\left(z\right)$, 
in the plane generated by $r\left(z\right)$ and $\nu\left(z\right)$. 
Note that for the point on the boundary $z\in\lbrace z_{1}=z_{n}=0\rbrace$ 
we can consider $\nu\left(z\right)$ just taking the normal to the hypersurface defined by the equation $x_{1}\tan\alpha +x_{n}=0$ 
or to the one defined by the equation $x_{1}\tan\alpha -x_{n}=0$, 
and it is well defined since the angle $\theta\left(z\right)$ is the same for those points. 
In addition, the map $x\rightarrow\left(z, \overline{\delta}\right)$ is continuous 
in $\Sigma ^{\delta}_{D}\setminus\Sigma_{D}$. 

\noindent Now, we can extend every $v\in\emph{F}$ to a $\tilde{v}\in\emph{W}^{1, \infty}\left(\Sigma^{\delta}_{D}\right)$, 
taking $v=\tilde{v}$ in $\Sigma_{D}$ and $\tilde{v}\left(x\right)=v\left(z\right)$ for $x\in\Sigma^{\delta}_{D}\setminus\Sigma_{D}$. 
Moreover, if we consider the function
\bern 
 \tilde{K}\left(x\right)  =
 \left\{ 
\begin{array}{ll} 
           1 \qquad & \mathrm{in\ }   \Sigma_{D},    \\
           1 + C\overline{\delta} \qquad  & \mathrm{in\ }  \Sigma^{\delta}_{D}\setminus\Sigma_{D},
 \end{array} 
\right.         
 \nonumber\eern
for some large constant $C>0$ independent of $\delta$, we get $\vert\nabla\tilde{v}\vert\leq\tilde{K}$ a. e. in $\Sigma^{\delta}_{D}$. 
Now, we regularize $\tilde{v}$ using the convolution with mollifiers, 
that is considering, for $\lambda>0$ small enough, $v_{\lambda}:=\tilde{v}\ast\rho_{\lambda}$, 
with $\rho_{\lambda}\left(x\right)=\lambda^{-n}\rho\left(x/\lambda\right)$, 
$\rho\in\mathcal{C}^{\infty}_{0}\left(\R^{n}\right)$, $supp\rho\subset B_{1}\left(0\right)$, 
$\int_{\R^{n}}\rho\left(x\right)dx=1$. 
Then we have 
\be    \vert\nabla v_{\lambda}\vert \leq \vert\nabla\tilde{v}\vert\ast\rho_{\lambda} \leq  \tilde{K}\ast\rho_{\lambda} \leq   1  + C\lambda   \nonumber\ee
on $\Sigma_{D}$ and $v_{\lambda}\rightarrow v$ in $\mathcal{C}\left(\Sigma_{D}\right)$ as $\lambda\rightarrow 0$. 
Let now $x, y\in\Sigma_{D}$ and consider the function $\xi\left(t\right)=tx+\left(1-t\right)y$, for $t\in\left[0, 1\right]$; then we can estimate
\be
\vert  v_{\lambda}\left(x\right) - v_{\lambda}\left(y\right) \vert 
\leq \int_{0}^{1} \vert\nabla v_{\lambda}\left(\xi\left(t\right)\right)\vert \cdot \vert \frac{d\xi}{dt}\vert dt  
\leq \int_{0}^{1} \vert 1 + C\lambda\vert \cdot \vert x-y \vert dt 
\leq  \left(1 + C\lambda\right) \cdot \vert x-y \vert . 
\nonumber \ee
Letting $\lambda\rightarrow 0$, we obtain $\vert v\left(x\right)-v\left(y\right)\vert\leq\vert x-y\vert$. 
Hence $v\left(x\right)\leq v\left(y\right)+\vert x-y\vert$, and 
$v\left(x\right)\leq \vert y-Q_{0}\vert+\vert x-y\vert$ for all $y\in\partial\Sigma_{D}$. 
So $v\leq\overline{\phi}$. 

We next prove $2)$. 
By gradient estimate and the Ascoli-Arzel\`{a} theorem we know that the $\phi ^{d}$'s 
admit limit $\hat{\phi}$ in the whole closure of $\Sigma_{D}$. 
Moreover it is easy to see that $\hat{\phi}$ belong to the set $\emph{F}$; 
hence $\hat{\phi}\leq\overline{\phi}$. 
We need then to prove only $\overline{\phi}\leq\hat{\phi}$. 
Let $v\in\emph{F}$. Similarly to $1)$, we extend $v$ to $\tilde{v}$ in $\Sigma^{\delta}_{D}$ 
and regularize $\tilde{v}$ to $v_{\lambda}$ in such a way that we have 
$\norm{v-v_{\lambda}}_{L^{\infty}\left(\Sigma_{D}\right)}\leq C\lambda$ and $\vert\nabla\tilde{v}\vert\leq\tilde{K}$. 
Hence as before we get $\vert\nabla v_{\lambda}\vert\leq 1+C\lambda$ on $\Sigma_{D}$ and 
$v_{\lambda}\rightarrow v$ in $\mathcal{C}\left(\Sigma_{D}\right)$ as $\lambda\rightarrow 0$. 
By simple computation we obtain that $v_{\lambda}$ satisfies 
\bern
\left\{ 
\begin{array}{ll} 
         \frac{1}{d}\Delta  v_{\lambda}  - \vert\nabla v_{\lambda}  \vert ^{2} + 1 + C\lambda + \frac{1}{d} A_{\lambda} \geq  0   \quad & \mathrm{in\ } \Sigma_{D},      \\
        v_{\lambda} \leq  \vert x-Q_{0}\vert + C\lambda     \quad  &  \mathrm{on\ } \partial\Sigma_{D},   
 \end{array} 
\right.         
 \nonumber\eern 
where $A_{\lambda}\geq 0$. 
If we define 
\be       \tilde{v}_{\lambda}     :=     \frac{v_{\lambda}}{\sqrt{1 + C\lambda +\frac{1}{d} A_{\lambda}}}, \nonumber\ee
by comparison we deduce that
\be      \tilde{v}_{\lambda}  \leq  \phi ^{d\sqrt{1 + C\lambda +\frac{1}{d} A_{\lambda}}}   + C\lambda . \label{vtilde}\ee      
Choosing $d=d'_{k_{l}}$ in $(\ref{vtilde})$ such that 
\be    d_{k_{l}} = d'_{k_{l}}   \sqrt{1 + C\lambda +\frac{1}{d'_{k_{l}}} A_{\lambda}}, \nonumber\ee
we see that 
\be    \frac{v_{\lambda}}{\sqrt{1 + C\lambda}}  \leq     \hat{\phi}  +   C\lambda     \nonumber\ee
as $d'_{k_{l}}\rightarrow\infty$. 
Then, letting $\lambda\rightarrow 0$, we obtain $v\leq\hat{\phi}$; 
in particular, $\overline{\phi}\leq\hat{\phi}$. 
Hence $\overline{\phi}=\hat{\phi}$. 
\end{proof}

Next we analyze the asymptotic behavior of the solutions of $(\ref{logphi})$. 
From now on in this subsection we study separately the two cases 
$0<\alpha <\frac{\pi}{2}$ and $\frac{\pi}{2}\leq\alpha\leq\pi$. 
Let us consider the first case. 
\begin{proposition} 
Suppose that $0<\alpha <\frac{\pi}{2}$. 
Let $D$ be a large fixed constant and $\Phi^{d}$ the solution of $(\ref{logphi})$. 
Then we have 
\be     \Phi^{d}\left(x\right) \rightarrow \min \left\lbrace     d_{1}\left(x\right),  d_{2}\left(x\right)  \right\rbrace ,   \qquad \mathrm{as\ }   d\rightarrow\infty ,  \nonumber\ee
uniformly on the compact sets of $\overline{\Sigma}_{D}\cap\bar{B}_{\frac{D}{4}}\left(0\right)$, 
where
\bern
d_{1}\left(x\right) &:=& \sqrt{\left(x_{1} - \frac{\tan^{2}\alpha -1}{\tan^{2}\alpha + 1}\right)^{2}   +  \vert x'' \vert^{2}   +    \left(x_{n} - \frac{2\tan\alpha}{\tan^{2}\alpha + 1}\right)^{2}}, \label{d1}\\
d_{2}\left(x\right) &:=& \sqrt{\left(x_{1} - \frac{\tan^{2}\alpha -1}{\tan^{2}\alpha + 1}\right)^{2}   +  \vert x'' \vert^{2}   +    \left(x_{n} +   \frac{2\tan\alpha}{\tan^{2}\alpha + 1}\right)^{2}}. 
\label{d2}\eern  
\label{prop:asymptotic}
\end{proposition}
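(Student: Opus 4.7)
The plan is to first show that $\Phi^{d}$, the solution of (\ref{logphi}), shares the viscosity-style limit already established for the closely related problem (\ref{phi}) in Lemma \ref{lem:viscosity}, and then to compute this limit explicitly on $\bar{B}_{D/4}(0)$.

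For the first step, note that $\Phi^{d}$ and $\phi^{d}$ satisfy the same Hamilton--Jacobi type equation $\frac{1}{d}\Delta-|\nabla\cdot|^{2}+1=0$, differing only in their boundary traces, which by Lemma \ref{lem:bordo} coincide up to $o(1)$ uniformly on $\partial\Sigma_{D}$. The cleanest way to exploit this is to return to the linear formulation (\ref{varphi}) for $\varphi^{d}=e^{-d\Phi^{d}}$: since the operator $-\frac{1}{d^{2}}\Delta+1$ obeys the maximum principle, one compares $\varphi^{d}$ with the auxiliary function $\tilde\varphi^{d}=e^{-d\phi^{d}}$, whose boundary trace is $e^{-d|\cdot-Q_{0}|}$. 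Taking logs and using Lemma \ref{lem:bordo} gives $\|\Phi^{d}-\phi^{d}\|_{L^{\infty}(\Sigma_{D})}=o(1)$, so Lemma \ref{lem:viscosity} transfers directly, yielding
\[ \Phi^{d}(x)\longrightarrow\overline{\phi}(x):=\inf_{z\in\partial\Sigma_{D}}\bigl(|x-z|+|z-Q_{0}|\bigr) \]
uniformly on compacts of $\overline{\Sigma}_{D}$.

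For the second step, I would identify $\overline{\phi}$ with $\min(d_{1},d_{2})$ on $\overline{\Sigma}_{D}\cap\bar{B}_{D/4}(0)$. Near the origin, $\partial\Sigma_{D}$ consists of two flat pieces $\Pi_{1},\Pi_{2}$ lying in the hyperplanes $H_{1}=\{x_{1}\tan\alpha+x_{n}=0\}$ and $H_{2}=\{x_{1}\tan\alpha-x_{n}=0\}$. Both $Q_{0}=(-1,0,\ldots,0)$ and any $x\in\Sigma_{D}$ satisfy $x_{1}\tan\alpha\pm x_{n}<0$, so $x$ and $Q_{0}$ lie on the same side of each $H_{i}$. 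The classical reflection principle then gives
\[ \inf_{z\in H_{i}}\bigl(|x-z|+|z-Q_{0}|\bigr)=|x-P_{i}|, \]
where $P_{i}$ is the mirror image of $Q_{0}$ across $H_{i}$ and the infimum is attained at the unique intersection $[x,P_{i}]\cap H_{i}$. A direct computation using the unit normal $\frac{1}{\sqrt{\tan^{2}\alpha+1}}(\tan\alpha,0,\ldots,0,\pm1)$ to $H_{i}$ yields exactly the two reflected points appearing in (\ref{d1})--(\ref{d2}), so $d_{i}(x)=|x-P_{i}|$.

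It remains to check that the infimum defining $\overline{\phi}(x)$ is actually realized on $\Pi_{1}\cup\Pi_{2}$ for $x\in\bar{B}_{D/4}(0)$, rather than on the closing smooth part $S$ or at a point of $H_{i}$ outside $\Pi_{i}$. Since $\Sigma_{D}=D\Sigma$, the surface $S$ sits at distance $\gtrsim D$ from the origin, so $|x-z|+|z-Q_{0}|\gtrsim D/2$ for $z\in S$ and $D$ large, while $|x-P_{i}|$ stays uniformly bounded as $x$ varies in $\bar{B}_{D/4}(0)$; likewise the minimizers $[x,P_{i}]\cap H_{i}$ lie within $O(D/4)$ of the origin, hence inside the scaled face $\Pi_{i}$ once $D$ is sufficiently large. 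Combining these observations with the previous step gives $\overline{\phi}(x)=\min(d_{1}(x),d_{2}(x))$ on the ball, completing the proof. The main obstacle is really the first step — preventing the $o(1)$ discrepancy between the boundary data of $\Phi^{d}$ and $\phi^{d}$ from spoiling convergence — which is precisely what the linear comparison via $\varphi^{d}=e^{-d\Phi^{d}}$ is designed to bypass, letting us avoid redoing the delicate mollification argument of Lemma \ref{lem:viscosity}; the subsequent geometric identifications are then elementary once the reflected points are correctly pinned down.
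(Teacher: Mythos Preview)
Your proof is correct and follows essentially the same two-step structure as the paper. In step~1 the paper observes that $\phi^{d}\pm\sup_{\partial\Sigma_{D}}\bigl|\,|\cdot-Q_{0}|+\tfrac{1}{d}\log U(d(\cdot-Q_{0}))\,\bigr|$ are a super- and subsolution of (\ref{logphi}) (adding a constant leaves the equation invariant), which via Lemma~\ref{lem:bordo} sandwiches $\Phi^{d}$ between $\phi^{d}\pm o(1)$; your linear comparison of $\varphi^{d}$ against $e^{\pm d\epsilon_{d}}\tilde\varphi^{d}$ is exactly this argument transported through the Cole--Hopf change of variables, so the two are equivalent (just be explicit that the comparison is multiplicative, not additive, since an $L^{\infty}$ bound on $\varphi^{d}-\tilde\varphi^{d}$ alone would not survive the logarithm).

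Where you genuinely diverge from the paper is in step~2. The paper proves the identity $\overline{\phi}=\min(d_{1},d_{2})$ in a separate lemma (Lemma~\ref{lem:viscosity2}) by parametrising $z\in\Pi_{1}$, differentiating $|x-z|+|z-Q_{0}|$ in $z_{1}$ and then in $z''$, and solving the resulting algebraic system explicitly; this yields the reflected point only after a page of computation. Your use of the classical reflection principle --- noting that $x$ and $Q_{0}$ lie on the same side of each $H_{i}$ inside the wedge, so the broken-path infimum equals $|x-P_{i}|$ --- bypasses all of that calculus and identifies $P_{i}$ directly from the unit normal. The trade-off is that you must then argue separately that the minimiser actually lands on the flat faces $\Pi_{i}\subset\partial\Sigma_{D}$ rather than on the full hyperplane or on the closing cap $S$, which you do correctly using $|P_{i}|=|Q_{0}|=1$ and the scaling $\Sigma_{D}=D\Sigma$. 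Your route is shorter and more geometric; the paper's route has the minor advantage of producing the explicit minimiser coordinates $(z_{1},z_{n},z'')$, which are not needed here but could be useful elsewhere.
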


\begin{remark} 
Note that $d_{1}$ and $d_{2}$ are the distance functions, respectively, from the point $Q_{1}=\left(\frac{\tan^{2}\alpha -1}{\tan^{2}\alpha + 1}, 0, \cdots , 0, \frac{2\tan\alpha}{\tan^{2}\alpha + 1}\right)$, which is the symmetrical point to $Q_{0}$ with respect to the hypersurface defined by the equation $x_{1}\tan\alpha +x_{n}=0$, and from the point $Q_{2}=\left(\frac{\tan^{2}\alpha -1}{\tan^{2}\alpha + 1}, 0, \cdots , 0, -\frac{2\tan\alpha}{\tan^{2}\alpha + 1}\right)$, which is the symmetrical point to $Q_{0}$ with respect to the hypersurface defined by the equation $x_{1}\tan\alpha -x_{n}=0$. 
So the function $\overline{\phi}\left(x\right)$ is even with respect to the coordinate $x_{n}$ and a.e. differentiable. 
The problem is that it does not have zero $x_{n}$-derivative on $\left\lbrace x_{n}=0\right\rbrace$.   
\end{remark}

\begin{proof} 
If $\phi^{d}$ is a solution of $(\ref{phi})$, it is easy to see that 
$\phi^{d}+\sup_{x\in\partial\Sigma_{D}}\vert\vert x-Q_{0}\vert +\frac{1}{d}\log\left(U\left(d\left(x -Q_{0}\right)\right)\right)\vert$ 
is a supersolution of $(\ref{logphi})$ and 
$\phi^{d}-\sup_{x\in\partial\Sigma_{D}}\vert\vert x-Q_{0}\vert +\frac{1}{d}\log\left(U\left(d\left(x -Q_{0}\right)\right)\right)\vert$ 
is a subsolution. 
Then $\Phi^{d}$ must lie in between these two functions. 
Hence, by Lemma $\ref{lem:bordo}$, it is sufficient to prove the analogous statement for $\phi^{d}$. 
The proof of the latter fact is a consequence of Lemma $\ref{lem:viscosity}$ and 
the following Lemma $\ref{lem:viscosity2}$.  
\end{proof}

\begin{lemma} 
Suppose that $0<\alpha <\frac{\pi}{2}$. 
If $\overline{\phi}\left(x\right)$ is as in $(\ref{visc})$, then 
\be     
\overline{\phi}\left(x\right) = \min \left\lbrace     d_{1}\left(x\right),  d_{2}\left(x\right)  \right\rbrace   ,     \qquad    x\in \bar{B}_{\frac{D}{4}}\left(0\right),  \nonumber \ee
where $d_{1}$ and $d_{2}$ are as in $(\ref{d1})$ and $(\ref{d2})$. 
\label{lem:viscosity2}\end{lemma}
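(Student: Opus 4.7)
The plan is to decompose $\partial \Sigma_D$ into the two flat pieces
\[
\Pi_\pm := \partial\Sigma_D\cap\{x_1\tan\alpha\pm x_n=0\}
\]
and the closing surface $S_D$, then compute $\inf_{z\in\partial\Sigma_D}\bigl(|x-z|+|z-Q_0|\bigr)$ separately on each piece by a reflection argument, and finally check that on $\bar B_{D/4}(0)$ the piece $S_D$ never realizes the infimum.

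First I would apply the reflection principle on $\Pi_+$. Since $Q_1$ is the mirror image of $Q_0$ across the hyperplane $H_+=\{x_1\tan\alpha+x_n=0\}$, we have $|z-Q_0|=|z-Q_1|$ for every $z\in\Pi_+$, hence
\[
|x-z|+|z-Q_0|=|x-z|+|z-Q_1|\ \ge\ |x-Q_1|=d_1(x)
\]
by the triangle inequality, with equality iff $z$ lies on the segment $[x,Q_1]$. Since $\Sigma_D$ lies on the same side of $H_+$ as $Q_0$ while $Q_1$ lies on the opposite side, this segment crosses $H_+$ at a unique point $z^\ast$, and the direct bound $|z^\ast|\le|x|+|Q_1|\le D/4+1$ places $z^\ast$ in $\Pi_+$ once $D$ is large enough. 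Thus $\inf_{z\in\Pi_+}\bigl(|x-z|+|z-Q_0|\bigr)=d_1(x)$, and the symmetric argument with $Q_2$ and $H_-$ gives $\inf_{z\in\Pi_-}\bigl(|x-z|+|z-Q_0|\bigr)=d_2(x)$.

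Next I would discard the closing surface. By construction $S_D=D\cdot S$ with $S$ a fixed smooth piece bounded away from the origin, so there exists $c>0$, depending only on $\Sigma$, with $\mathrm{dist}(0,S_D)\ge cD$. For $x\in\bar B_{D/4}(0)$ and $z\in S_D$ this gives $|x-z|+|z-Q_0|\ge cD-D/4$, whereas on $\bar B_{D/4}(0)$ one has $\max\{d_1(x),d_2(x)\}\le D/4+1$; choosing $D$ large makes the infimum over $S_D$ strictly larger than $\min\{d_1(x),d_2(x)\}$. Combining the three contributions yields $\overline\phi(x)=\min\{d_1(x),d_2(x)\}$ on $\bar B_{D/4}(0)$.

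The delicate step is verifying that the crossing point $z^\ast$ of $[x,Q_1]$ with $H_+$ actually lies in the flat portion $\Pi_+$, rather than escaping onto $S_D$ or hitting the singularity $D\Gamma_D$. The bound $|z^\ast|\le D/4+1$ handles the first issue once $D$ is taken large compared with the scale of $S$, while the singularity is a codimension-two set along which $z\mapsto|x-z|+|z-Q_0|$ is Lipschitz, so even in the degenerate configuration one can perturb $z^\ast$ to a nearby point of $\Pi_+$ at arbitrarily small cost. Once these geometric facts are in place the rest is a straightforward reflection/triangle-inequality computation.
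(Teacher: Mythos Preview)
Your reflection argument is correct and is considerably cleaner than the paper's proof. The paper proceeds by direct computation: for $x$ with $x_n\geq 0$ it restricts to the upper flat piece, invokes the ellipsoid level-set picture to locate the minimizing $z$, reduces to $x''=0$ by symmetry, solves explicitly for $(z_1,z_n)$ by differentiating the one-variable functional, then separately minimizes in $z''$, and finally substitutes everything back to recover $d_1(x)$. Your approach bypasses all of this calculus: recognizing that $|z-Q_0|=|z-Q_1|$ on $H_+$ turns the problem into a single triangle inequality, and the value $d_1(x)=|x-Q_1|$ drops out immediately. The trade-off is that the paper's explicit formulas for the minimizing $z$ (your $z^\ast$) could in principle be reused elsewhere, whereas your argument gives only the value of the infimum.

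One small imprecision is worth noting. The equality $\inf_{z\in\Pi_+}\bigl(|x-z|+|z-Q_0|\bigr)=d_1(x)$ need not hold for \emph{every} $x\in\bar\Sigma_D\cap\bar B_{D/4}(0)$: when $\tan\alpha>\sqrt{3}$ and $x_n$ is close to $-|x_1|\tan\alpha$, a short computation gives $z^\ast_1>0$, so $z^\ast$ lies past the edge $\Gamma_D$ and is not in $\Pi_+$. Your bound $|z^\ast|\leq D/4+1$ only rules out escape to $S_D$, not this phenomenon. This does not damage the conclusion, however, since the lower bounds $\inf_{\Pi_\pm}\geq d_{1,2}(x)$ hold unconditionally, and for the upper bound one only needs equality on the piece corresponding to $\min\{d_1,d_2\}$: for $x_n\geq 0$ one checks directly that $z^\ast_1\leq 0$ (both summands in $2x_1\tan\alpha+(1-\tan^2\alpha)x_n$ are then nonpositive when $\tan\alpha\geq 1$, and the bound $x_n\leq|x_1|\tan\alpha$ handles $\tan\alpha<1$), while for $x_n\leq 0$ the symmetric argument on $\Pi_-$ applies. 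Making this case distinction explicit would complete your argument.
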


\begin{proof} 
Consider a point $x=\left(x_{1}, \cdots , x_{n}\right)$ with $x_{n}\geq 0$. 
By construction of $\Sigma_{D}$, the point $z\in\partial\Sigma_{D}$ 
which realizes the infimum will necessarily belong to the set 
$\left\lbrace\left\lbrace x_{1}\tan\alpha +x_{n}=0\right\rbrace\cap\left\lbrace x_{1}<0\right\rbrace\right\rbrace$. 
This implies that 
\be   \overline{\phi}\left(x\right)   =   \inf_{z\in\left\lbrace\left\lbrace x_{1}\tan\alpha +x_{n}=0\right\rbrace\cap\left\lbrace x_{1}<0\right\rbrace\right\rbrace}  \left(\vert x-z\vert + \vert z-Q_{0}\vert\right).      \nonumber\ee
Now we can reason as follows: given $x$, the level sets of the function $z\rightarrow\vert x-z\vert +\vert z-Q_{0}\vert$ 
are the axially symmetric ellipsoids with focal points $x$ and $Q_{0}$. 
The smaller is the ellipsoid, the smaller is the value of this function; 
so we are reduced to find the smallest ellipsoid which intersects 
$\left\lbrace\left\lbrace x_{1}\tan\alpha +x_{n}=0\right\rbrace\cap\left\lbrace x_{1}<0\right\rbrace\right\rbrace$. 
We note that if we fix $x_{1}$, $x_{n}$ and vary only $x''$, the corresponding infimum $z$ 
has the same $z_{1}$, $z_{n}$ and different $z''$; 
so we can determine $z_{1}$, $z_{n}$ in the simplest case $x''=\left(0, \cdots , 0\right)$, 
and obviously $z''=\left(0, \cdots , 0\right)$. 
Then we are reduced to consider the minimum problem 
\be  
\min_{\left(z_{1}, z_{n}\right)\in 
\left\lbrace\left\lbrace x_{1}\tan\alpha +x_{n}=0\right\rbrace\cap\left\lbrace x_{1}<0\right\rbrace\right\rbrace}
\left(\sqrt{\left(x_{1}-z_{1}\right)^{2}  +   \left(x_{n} + \tan\alpha z_{1}\right)^{2}}    +       \sqrt{\left(z_{1}+1\right)^{2}  + \tan^{2}\alpha z_{1}^{2}}    \right).     
 \nonumber  \ee 
Deriving with respect to the variable $z_{1}$ we obtain that at a minimum point 
 \be     \frac{-\left(x_{1}-z_{1}\right)+\tan\alpha\left(x_{n}+\tan\alpha z_{1}\right)}{\sqrt{\left(x_{1}-z_{1}\right)^{2}  +   \left(x_{n} + \tan\alpha z_{1}\right)^{2}}}    +    \frac{\left(z_{1}+1\right)+\tan^{2}\alpha z_{1}}{\sqrt{\left(z_{1}+1\right)^{2}  + \tan^{2}\alpha z_{1}^{2}}} =0,   \nonumber\ee 
which implies 
\bern 
 z_{1} = \frac{-2\tan\alpha x_{1} + \left(\tan^{2}\alpha -1\right)x_{n}}{\left(\tan^{2}\alpha +1\right)\left(\tan\alpha x_{1} + x_{n} - \tan\alpha\right)}, \label{z1}\\
 z_{n} =  \frac{2\tan^{2}\alpha x_{1} - \tan\alpha\left(\tan^{2}\alpha -1\right)x_{n}}{\left(\tan^{2}\alpha +1\right)\left(\tan\alpha x_{1} + x_{n} - \tan\alpha\right)}.  \label{zn}  
\eern 
Now assume that $x''\neq\left(0, \cdots , 0\right)$ and $x_{1}$, $x_{n}$ are as before. 
By the previous observation we know that the coordinates $z_{1}$, $z_{n}$ of the corresponding infimum 
are given by $(\ref{z1})$ and $(\ref{zn})$. 
So we have to determine only $z''$. 
To do this let us consider the minimum problem
\be   \min_{z''\in\R^{n-2}}
\left(\sqrt{\left(x_{1}-z_{1}\right)^{2}  + \vert x''-z''\vert^{2}  +  \left(x_{n} + \tan\alpha z_{1}\right)^{2}}    +       \sqrt{\left(z_{1}+1\right)^{2}  +   \vert z''\vert^{2}   +  \tan^{2}\alpha z_{1}^{2}} \right).     
 \label{minprob2}  \ee    
Again by differentiation we obtain that a minimum point must satisfy
\be     \frac{z''-x''}{\sqrt{\left(x_{1}-z_{1}\right)^{2}  + \vert x''-z''\vert^{2}  +  \left(x_{n} + \tan\alpha z_{1}\right)^{2}}}     +     
          \frac{z''}{\sqrt{\left(z_{1}+1\right)^{2}  +   \vert z''\vert^{2}   +  \tan^{2}\alpha z_{1}^{2}}}    =   0,     \nonumber\ee
which gives
\be      z''  =    x''  \frac{\sqrt{\left(z_{1}+1\right)^{2}   +  \tan^{2}\alpha z_{1}^{2}}}{\sqrt{\left(x_{1}-z_{1}\right)^{2}  +   \left(x_{n} + \tan\alpha z_{1}\right)^{2}} +    \sqrt{\left(z_{1}+1\right)^{2}   +  \tan^{2}\alpha z_{1}^{2}}}.     \label{z''} \ee
If we plug $(\ref{z1})$, $(\ref{zn})$ and $(\ref{z''})$ into $(\ref{minprob2})$, 
we obtain that $\overline{\phi}\left(x\right)= d_{1}\left(x\right)$. 
Reasoning in the same way for points with $x_{n}<0$, we have $\overline{\phi}\left(x\right)= d_{2}\left(x\right)$. 
Then we get the conclusion.
\end{proof}

\begin{remark} 
Note that $\overline{\phi}$ is a viscosity solution of the Hamilton-Jacobi equation 
$\vert\nabla\phi\vert^{2}=1$ in $\Sigma_{D}$. 
In fact, what we have to show is that 
\begin{itemize}
\item[i)] $\vert p\vert^{2}\leq 1$, for every $x\in\Sigma_{D}$ and every $p\in D^{+}\overline{\phi}\left(x\right)$,
\item[ii)] $\vert p\vert^{2}\geq 1$, for every $x\in\Sigma_{D}$ and every $p\in D^{-}\overline{\phi}\left(x\right)$,
\end{itemize}
where $D^{+}\overline{\phi}\left(x\right)$ and $D^{-}\overline{\phi}\left(x\right)$ are 
respectively the superdifferential and the subdifferential of $\overline{\phi}$ at $x$. 
Now we can use the description of $D^{+}\overline{\phi}\left(x\right)$ and $D^{-}\overline{\phi}\left(x\right)$ 
given in Theorem $3.4.4$ in \cite{CS}: 
let $\Omega\subset\R^{n}$ be open and $S\subset\R^{m}$ be compact; 
let $F=F\left(s, x\right)$ be continuous in $S\times\Omega$ together with its partial derivative $D_{x}F$, 
and let us define $u\left(x\right)=\min_{s\in S}F\left(s, x\right)$; 
given $x\in\Omega$, let us set 
\be     M\left(x\right)=\left\lbrace s\in S : u\left(x\right) =  F\left(s, x\right)\right\rbrace ,  \qquad    
           Y\left(x\right)=\left\lbrace  D_{x}F\left(s, x\right)  :     s\in   M\left(x\right)\right\rbrace .      \nonumber  \ee
Then, for any $x\in\Omega$, 
\be     D^{+}u\left(x\right) = co\left(Y\left(x\right)\right),     \label{D+}\ee
and 
\bern 
  D^{-}u\left(x\right) =     
\left\{ 
\begin{array}{ll} 
           \left\lbrace p\right\rbrace  & \mathrm{if\ }   Y\left(x\right) = {p},    \\
          \emptyset   & \mathrm{if\ }  Y\left(x\right)  \mathrm{\ is\ not\ a\ singleton}.
 \end{array} 
\right.         
 \label{D-}\eern
\noindent Now we can take $\Omega=\Sigma_{D}$, $S=\left\lbrace Q_{1}, Q_{2}\right\rbrace$ 
and $\overline{\phi}\left(x\right)=\min_{i\in\left\lbrace 1, 2\right\rbrace}\left\lbrace d_{i}\left(x\right)\right\rbrace$; 
so 
\be    M\left(x\right)=\left\lbrace Q_{i}:\overline{\phi}\left(x\right) =  d_{i}\left(x\right)\right\rbrace ,  \qquad
          Y\left(x\right)=\left\lbrace  D_{x}d_{i}\left(x\right):Q_{i}\in M\left(x\right)\right\rbrace .  \nonumber\ee
Then, using $(\ref{D+})$ and $(\ref{D-})$, it is easy to see that, 
if we take $x\in\Sigma_{D}$ with $x_{n}>0$, 
then $D^{+}\overline{\phi}\left(x\right)=D^{-}\overline{\phi}\left(x\right)=\left\lbrace D_{x}d_{1}\left(x\right)\right\rbrace$; 
in the same way, if $x_{n}<0$, then 
$D^{+}\overline{\phi}\left(x\right)=D^{-}\overline{\phi}\left(x\right)=\left\lbrace D_{x}d_{2}\left(x\right)\right\rbrace$. 
So in these two cases properties $i)$, $ii)$ are trivially verified. 
In the case $x_{n}=0$, we have that $\overline{\phi}\left(x\right)=d_{1}\left(x\right)=d_{2}\left(x\right)$; 
then $M\left(x\right)=\left\lbrace Q_{1}, Q_{2}\right\rbrace$ and 
$Y\left(x\right)=\left\lbrace D_{x}d_{1}\left(x\right), D_{x}d_{2}\left(x\right)\right\rbrace$. 
Hence, using again $(\ref{D+})$, $(\ref{D-})$, we obtain 
$D^{+}\overline{\phi}\left(x\right)=co\left\lbrace\frac{x-Q_{1}}{d_{1}\left(x\right)}, \frac{x-Q_{2}}{d_{2}\left(x\right)}\right\rbrace = \frac{x-co\left\lbrace Q_{1}, Q_{2}\right\rbrace}{\overline{\phi}\left(x\right)}$ and $D^{-}\overline{\phi}\left(x\right)=\emptyset$. 
Then we have only to prove property $i)$, since $ii)$ is again trivially verified. 
To show $i)$ it is sufficient to observe that every $p\in D^{+}\overline{\phi}\left(x\right)$ 
is of the form $p=\frac{x-Q}{\overline{\phi}\left(x\right)}$, where $Q$ belongs to the line joining $Q_{1}$ to $Q_{2}$, 
and that $\vert x-Q\vert\leq\overline{\phi}\left(x\right)$. 
\end{remark}

\medskip

Let us consider now the case $\frac{\pi}{2}\leq\alpha\leq\pi$. 
We have the analogous of the Proposition $\ref{prop:asymptotic}$. 
\begin{proposition} 
Suppose that $\frac{\pi}{2}\leq\alpha\leq\pi$. 
Let $D$ be a large fixed constant and $\Phi^{d}$ the solution of $(\ref{logphi})$. 
Then we have 
\be     \Phi^{d}\left(x\right) \rightarrow \bar{\Phi}\left(x\right)  ,   \qquad \mathrm{as\ }   d\rightarrow\infty ,  \label{converg}\ee
uniformly on the compact sets of $\overline{\Sigma}_{D}\cap\bar{B}_{\frac{D}{4}}\left(0\right)$, 
where
\bern
    \bar{\Phi}\left(x\right) = 
     \left\{ 
\begin{array}{ll} 
         \min \left\lbrace d_{1}\left(x\right), d_{2}\left(x\right)\right\rbrace , \quad & \mathrm{if\ }    
         \tan\alpha\leq\frac{x_{1}-\sqrt{x^{2}_{1} + x^{2}_{n}}}{x_{n}},      \\
        \sqrt{\left(1 + \sqrt{x^{2}_{1} + x^{2}_{n}}\right)^{2} +\vert x''\vert^{2}},   \quad     &    \mathrm{if\ } 
        \tan\alpha\geq\frac{x_{1}-\sqrt{x^{2}_{1} + x^{2}_{n}}}{x_{n}}.  
\end{array} 
\right.         
\label{caso2}\eern  
\label{prop:asymptotic2}
\end{proposition}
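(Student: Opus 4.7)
My approach mirrors that of Proposition \ref{prop:asymptotic}. First, by the barrier sandwich used there, $\phi^{d}\pm\sup_{z\in\partial\Sigma_{D}}\bigl|\,|z-Q_{0}|+\tfrac{1}{d}\log U(d(z-Q_{0}))\bigr|$ are, respectively, a super- and a subsolution of $(\ref{logphi})$, so via Lemma \ref{lem:bordo} the solution $\Phi^{d}$ shares the uniform limit of $\phi^{d}$ on compact sets of $\overline{\Sigma}_{D}$. Lemma \ref{lem:viscosity} then identifies this limit with $\bar{\phi}(x)=\inf_{z\in\partial\Sigma_{D}}(|x-z|+|z-Q_{0}|)$, and the task reduces to showing $\bar{\phi}$ equals the right-hand side of $(\ref{caso2})$. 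Neither the barrier construction of \cite{GMMP1} nor Lemma \ref{lem:viscosity} uses the sign of $\cos\alpha$, so these opening steps go through verbatim.

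The genuinely new content is an analogue of Lemma \ref{lem:viscosity2} adapted to the reflex-angle geometry. I split the infimum defining $\bar{\phi}$ according to where the optimal boundary point $z$ lies. When $z$ is in the interior of one of the flat faces $F_{\pm}:=\{x_{1}\tan\alpha\pm x_{n}=0\}\cap\partial\Sigma_{D}$, the reflection principle (computed as in the proof of Lemma \ref{lem:viscosity2}) yields $|x-Q_{i}|$ with $Q_{i}$ the mirror image of $Q_{0}$ across the corresponding face, giving the very same expressions as in $(\ref{d1})$, $(\ref{d2})$; so these contributions produce $d_{1}(x)$ and $d_{2}(x)$. If instead the planar reflection minimiser falls outside the admissible part of $F_{\pm}$ (which for $\pi/2\leq\alpha\leq\pi$ lies in $\{x_{1}\geq 0\}$), the infimum is pushed onto the singular edge $\Gamma_{D}=\{z_{1}=z_{n}=0\}$; minimising
\[
\sqrt{x_{1}^{2}+|x''-z''|^{2}+x_{n}^{2}}+\sqrt{1+|z''|^{2}}
\]
over $z''\in\R^{n-2}$ is a straight-line problem in $\R\times\R^{n-2}$ whose minimum equals $\sqrt{(1+\sqrt{x_{1}^{2}+x_{n}^{2}})^{2}+|x''|^{2}}$, matching the second alternative of $(\ref{caso2})$.

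To locate the threshold between the two regimes I impose that the minimiser on $F_{+}$ read off from $(\ref{z1})$ satisfies $z_{1}=0$; this becomes the quadratic $\tan^{2}\alpha\,x_{n}-2\tan\alpha\,x_{1}-x_{n}=0$ in $\tan\alpha$, and the root compatible with $\tan\alpha\leq 0$ is precisely $\tan\alpha=(x_{1}-\sqrt{x_{1}^{2}+x_{n}^{2}})/x_{n}$. By the $x_{n}\mapsto -x_{n}$ symmetry of $\Sigma_{D}$ the analogous condition for $F_{-}$ reduces to the same inequality, producing the dichotomy in $(\ref{caso2})$. The main obstacle I anticipate is the bookkeeping of this case split: one must check (a) that on the threshold surface the two candidate expressions coincide so that $\bar{\phi}$ is continuous, which holds because at $z_{1}=0$ the reflection minimiser already lies on $\Gamma_{D}$ and therefore the two optimisation problems coincide there; and (b) that once the planar reflection minimiser leaves $F_{\pm}$, no interior point of either flat face can beat the value attained on $\Gamma_{D}$, which follows from the convexity in $z$ of $|x-z|+|z-Q_{0}|$ together with the fact that the relative boundary of each flat face is $\Gamma_{D}$ itself.
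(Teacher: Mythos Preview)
Your proposal is correct and follows essentially the same route as the paper: you reduce $\Phi^{d}$ to $\phi^{d}$ by the barrier sandwich, invoke Lemma~\ref{lem:viscosity} to identify the limit with $\overline{\phi}$, and then compute $\overline{\phi}$ explicitly---precisely what the paper means by ``mixing the arguments used in the proof of Lemma~\ref{lem:viscosity2} and those used in Lemma~3.9 in \cite{GMMP1}.'' Your case split (reflection minimiser on a flat face versus minimiser pushed to the edge $\Gamma_{D}$, with the threshold determined by $z_{1}=0$ in $(\ref{z1})$) is exactly the content of that mix, made explicit.
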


\begin{proof} 
We can reason as in the proof of Proposition $\ref{prop:asymptotic}$, 
obtaining that it is sufficient to show the convergence in $(\ref{converg})$ 
for the function $\phi^{d}$. 
To prove the latter assertion we have to use Lemma $\ref{lem:viscosity}$, 
together with the fact that in the case $\frac{\pi}{2}\leq\alpha\leq\pi$ 
the function $\overline{\phi}$ defined in $(\ref{visc})$ 
is equal to that one defined in $(\ref{caso2})$. 
We can obtain this expression by mixing the arguments used in 
the proof of Lemma 
$\ref{lem:viscosity2}$ and those used in Lemma $3.9$ in \cite{GMMP1}. 
\end{proof}

\subsection{Case  $\pi<\alpha <2\pi$} \label{sec:3.2}
In this case we construct the domain $\Sigma$ in the following way: 
we consider the set $\left\lbrace x_{n}=0\right\rbrace\cap\left\lbrace x_{1}\leq 0\right\rbrace$ 
and the hypersurface defined by the equation $x_{1}\tan\alpha +x_{n}=0$ with $x_{n}\leq 0$. 
Then we close the domain with a smooth surface; 
the following figure represents a section of the domain in the plane $x_{1}$, $x_{n}$.
\begin{center}
\includegraphics[scale=0.5]{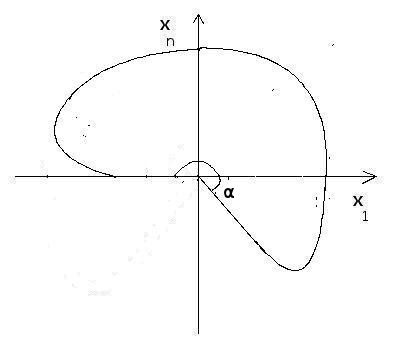} 
\end{center}
We define the scaled domain $\Sigma_{D}$ as in $(\ref{domain})$ and 
denote by $\Gamma_{D}$ the singularity, which lies on $\left\lbrace x_{1}=x_{n}=0\right\rbrace$. 
As in the previous case, the solution of a Dirichlet problem in $\Sigma_{D}$ 
will be qualitatively similar to that of $(\ref{dirichlet})$. 

We have to study the asymptotic behavior of the solution of the problem
\bern
\left\{ 
\begin{array}{ll} 
         -\frac{1}{d^{2}}\Delta \varphi + \varphi = 0   \quad & \mathrm{in\ } \Sigma_{D},      \\
           \varphi = U\left(d\left(\cdot -Q_{0}\right)\right)  \quad     &    \mathrm{on\ } \partial\Sigma_{D},  
\end{array} 
\right.         
 \nonumber\eern  
To do this we consider the function $\phi=-\frac{1}{d}\log\varphi$, which satisfies 
\bern
\left\{ 
\begin{array}{ll} 
         \frac{1}{d}\Delta\phi - \vert\nabla\phi\vert ^{2} + 1 = 0   \quad & \mathrm{in\ } \Sigma_{D},      \\
        \phi =  -\frac{1}{d}\log\left(U\left(d\left(\cdot -Q_{0}\right) \right) \right)  \quad  &  \mathrm{on\ } \partial\Sigma_{D}.    
 \end{array} 
\right.         
 \label{logphi2}\eern  
Since the asymptotic analysis is very similar to that one made in Subsection $\ref{sec:asympt}$ for $0<\alpha\leq\pi$ 
we will not repeat the computations. 
What we obtain is the following result:
\begin{proposition} 
Suppose that $\pi <\alpha <2\pi$. 
Let $D$ be a large fixed constant and $\Phi^{d}$ the solution of $(\ref{logphi2})$. 
Then we have 
\be     \Phi^{d}\left(x\right) \rightarrow dist\left(x, Q_{0}\right) = 
          \sqrt{\left(x_{1} + 1\right)^{2} + \vert x'\vert^{2}},   \qquad \mathrm{as\ }   d\rightarrow\infty ,  \nonumber \ee
uniformly on the compact sets of $\overline{\Sigma}_{D}\cap\bar{B}_{\frac{D}{4}}\left(0\right)$. 
\label{prop:asymptotic3}
\end{proposition}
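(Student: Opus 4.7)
The plan is to parallel the three-step argument used for Propositions $\ref{prop:asymptotic}$ and $\ref{prop:asymptotic2}$, indicating only the points at which the reflex-angle construction of $\Sigma_D$ in Subsection $\ref{sec:3.2}$ changes the picture.

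First I would reduce the statement for $\Phi^d$ to the analogous one for $\phi^d$, the solution of the Hamilton--Jacobi problem $(\ref{phi})$ with the new $\Sigma_D$. As in the proof of Proposition $\ref{prop:asymptotic}$, the functions $\phi^d \pm \sup_{x \in \partial\Sigma_D} \bigl| \, \vert x - Q_0\vert + \tfrac{1}{d}\log U(d(x-Q_0)) \bigr|$ form a super- and a subsolution pair for $(\ref{logphi2})$, and Lemma $\ref{lem:bordo}$ forces the supremum to vanish as $d \to \infty$, so it suffices to establish the claimed limit for $\phi^d$.

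Next I would verify that the auxiliary machinery of Subsection $\ref{sec:asympt}$ carries over. Existence and uniqueness of $\phi^d$ in the new $\Sigma_D$ follow from the Perron method together with the maximum principle, with barriers available at every boundary point (including along the reflex edge $\Gamma_D$ of opening $\alpha \in (\pi, 2\pi)$) by the construction of \cite{Gri} cited in Lemma $\ref{lem:stima}$. Uniform Lipschitz bounds in the interior and near the smooth part of $\partial\Sigma_D$ are then obtained by the mollified-supersolution construction of Lemmas $\ref{lem:stima1}$ and $\ref{lem:stima2}$, which only uses local smoothness of $\partial\Sigma_D$ away from $\Gamma_D$ and the two-sided barriers. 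The viscosity characterization of Lemma $\ref{lem:viscosity}$ applies verbatim, since its proof is geometry-free, and yields
\[
\phi^d(x) \longrightarrow \bar{\phi}(x) := \inf_{z \in \partial\Sigma_D}\bigl(\vert x - z\vert + \vert z - Q_0\vert\bigr),
\]
uniformly on compact subsets of $\overline{\Sigma}_D$.

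The heart of the argument is the identification of $\bar{\phi}$, and this is where the reflex geometry does all the work and the case bifurcates from the previous two propositions. By the construction in Subsection $\ref{sec:3.2}$, the Neumann portion $\{x_n = 0\} \cap \{x_1 \leq 0\}$ of $\partial\Sigma_D$ contains the point $Q_0 = (-1, 0, \ldots, 0)$ itself as soon as $D > 1$. Choosing $z = Q_0$ in the infimum gives $\bar{\phi}(x) \leq \vert x - Q_0\vert$, while the triangle inequality $\vert x - z\vert + \vert z - Q_0\vert \geq \vert x - Q_0\vert$ for every admissible $z$ delivers the reverse bound. Therefore $\bar{\phi}(x) = \vert x - Q_0\vert = \sqrt{(x_1 + 1)^2 + \vert x'\vert^2}$, as claimed. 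The only potential obstacle I foresee is the bookkeeping check that the barrier and Lipschitz constants in Lemmas $\ref{lem:stima}$--$\ref{lem:stima2}$ remain uniform in $d$ along the reflex edge, but this is routine: an opening $\alpha > \pi$ enlarges the interior cone and makes the classical \cite{Gri} barriers easier to construct than in Subsection $\ref{sec:3.1}$.
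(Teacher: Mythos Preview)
Your proposal is correct and follows precisely the approach the paper indicates (the paper omits all details, saying only that ``the asymptotic analysis is very similar to that one made in Subsection~\ref{sec:asympt}''). Your key observation---that for the reflex-angle construction $Q_0$ itself lies on $\partial\Sigma_D$, so the infimum in $\bar\phi$ is attained at $z=Q_0$ and collapses to $\vert x-Q_0\vert$---is exactly the right computation replacing Lemma~\ref{lem:viscosity2}.

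One small correction: the proof of Lemma~\ref{lem:viscosity} is not entirely geometry-free; the extension of $v\in\emph{F}$ to the $\delta$-neighbourhood $\Sigma_D^\delta$ is built from rays emanating from $Q_0$, which breaks down when $Q_0\in\partial\Sigma_D$. This is easily repaired (e.g.\ run the rays from any interior point instead, or use a standard collar extension), but ``applies verbatim'' overstates it slightly.
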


\subsection{Definition of the approximate solutions}  \label{sec:3.3}
In order to apply the theory in Subsection $\ref{perturbation}$, 
in this subsection we construct a manifold of approximate solutions to $(\ref{problem1})$. 
Since the limit function of the solutions of $(\ref{logphi})$ is not the same for different angles $\alpha$, 
as we have seen in Subsections $\ref{sec:3.1}$ and $\ref{sec:3.2}$, we will distinguish the cases. 
We will give the precise construction only for $0<\alpha <\frac{\pi}{2}$; 
in fact in this case the computations are quite different from the flat case $\alpha =\pi$. 
In the other cases the estimates for the approximate solutions are the same (for $\frac{\pi}{2}\leq\alpha\leq\pi$) 
or very similar (for $\pi <\alpha <2\pi$) to that ones obtained in \cite{GMMP1}, Subsection $3.2$, 
and then we will omit the proofs.

\subsubsection{Case $0<\alpha <\frac{\pi}{2}$}  \label{case 1}
Since the function $\bar{u}_{\epsilon, Q}$ defined in Subsection $\ref{apprNeumann}$ 
is an approximate solution of $(\ref{problem1})$ with pure Neumann boundary conditions, 
we need to modify it in the following way. 
If $\Phi^{d}$ the solution of $(\ref{logphi})$, the function 
\be     \Xi_{d}\left(y\right)   =    e^{-d\Phi^{d}\left(\frac{y}{d}+Q_{0}\right)}      \label{Xi}\ee
solves the problem 
\bern
\left\{ 
\begin{array}{ll} 
         -\Delta \Xi_{d} + \Xi_{d} = 0   \quad & \mathrm{in\ }    d\left(\Sigma_{D}-Q_{0}\right),      \\
           \Xi_{d} = U\left(\cdot\right)  \quad     &    \mathrm{on\ } d\left(\partial\Sigma_{D}-Q_{0}\right).  
\end{array} 
\right.         
 \label{Xiprob}\eern
We can obtain a solution to $(\ref{Xiprob})$ looking at the minimum problem
 \be      \inf_{v=U \mathrm{\ on\ } d\left(\partial\Sigma_{D}-Q_{0}\right)} 
 \left\lbrace \int_{d\left(\Sigma_{D}-Q_{0}\right)}\left(\vert\nabla v\vert^ {2} + v^ {2}\right)dy \right\rbrace . \label{Xiprobmin}\ee

From $(\ref{Xiprobmin})$ we can derive norm estimate on $\Xi_{d}$. 
In fact, we can take a cut-off function $\chi_{1}:d\left(\overline{\Sigma}_{D}-Q_{0}\right)\rightarrow\R$ such that 
\bern
\left\{ 
\begin{array}{ll} 
           \chi_{1}\left(y\right) = 1 \qquad & \mathrm{for\ }   dist\left(y, d\left(\partial\Sigma_{D}-Q_{0}\right)\right)\leq\frac{1}{2},    \\
            \chi_{1}\left(y\right) = 0 \qquad  & \mathrm{for\ } y\in d\left(\Sigma_{D}-Q_{0}\right),  \mathrm{\ }   dist\left(y, d\left(\partial\Sigma_{D}-Q_{0}\right)\right)\geq 1, \\
            \vert\nabla\chi_{1}\left(y\right) \vert \leq 4   \qquad  & \mathrm{for\ all\ } y,
 \end{array} 
\right.         
 \nonumber\eern 
and then consider the function $\bar{v}\left(y\right)=\chi_{1}\left(y\right)U\left(y\right)$. 
It is easy to see that $\norm{\bar{v}}_{H^{1}\left(d\left(\Sigma_{D}-Q_{0}\right)\right)}\leq e^{-d\left(1+o\left(1\right)\right)}$, 
so by $(\ref{Xiprobmin})$ we find that
\be             \norm{\Xi_{d}}_{H^{1}\left(d\left(\Sigma_{D}-Q_{0}\right)\right)}\leq \norm{\bar{v}}_{H^{1}\left(d\left(\Sigma_{D}-Q_{0}\right)\right)}\leq e^{-d\left(1+o\left(1\right)\right)}.       \label{Xinorm}\ee
We can also obtain pointwise estimates on $\Xi_{d}$. In fact, from Proposition $\ref{prop:asymptotic}$ we obtain that, 
as $d\rightarrow+\infty$, 
\be      \Xi_{d}\left(y\right) =    \exp \left[-\min \left\lbrace \sqrt{\left(y_{1} - d - \frac{d\left(\tan^{2}\alpha -1\right)}{\tan^{2}\alpha + 1}\right)^{2}   +  \vert y'' \vert^{2}   +    \left(y_{n} \mp \frac{2d\tan\alpha}{\tan^{2}\alpha + 1}\right)^{2}} \right\rbrace \right]  \cdot  e^{o\left(d\right)}, 
\label{Xipoint}\ee
for $y\in d\left(V-Q_{0}\right)$, where $V$ is any set compactly contained in $\overline{\Sigma}_{D}$. 
Finally, we have pointwise estimates for the gradient of $\Xi_{d}$. 
Indeed, using the uniform convergence in $(\ref{bordo})$ and reasoning as in the proof of Lemmas 
$\ref{lem:stima1}$ and $\ref{lem:stima2}$, we obtain that $(\ref{stimaFi})$ holds true also for $\Phi_{d}$. 
Then we can apply the arguments in \cite{LN} 
(see in particular Proposition $1.4$, Lemma $1.5$ and Lemma $B.1$) 
to conclude that $\nabla\Phi_{d}\rightarrow\nabla\overline{\phi}$ uniformly as $d\rightarrow+\infty$ 
in any set compactly contained in $\overline{\Sigma}_{D}$ on which $\nabla\overline{\phi}$ is defined. 
This convergence implies that, as $d\rightarrow+\infty$, 
\bern       \nabla\Xi_{d}\left(y\right)  =   - \exp \left[-\min \left\lbrace \sqrt{\left(y_{1} - d - \frac{d\left(\tan^{2}\alpha -1\right)}{\tan^{2}\alpha + 1}\right)^{2}   +  \vert y'' \vert^{2}   +    \left(y_{n} \mp \frac{2d\tan\alpha}{\tan^{2}\alpha + 1}\right)^{2}} \right\rbrace \right]    \nonumber\\
       \cdot  e^{o\left(d\right)}      
   \cdot \left(\nabla\overline{\phi}\left(\frac{y}{d}+Q_{0}\right)  + o\left(1\right)\right),   \label{Xigradpoint}\eern
for $y\in d\left(V-Q_{0}\right)$, where $V$ is as before. 

Now, we want to obtain similar bounds and estimates 
for $\frac{\partial\Xi_{d}}{\partial d}$ and its gradient. 
Using the definition of $\Xi_{d}\left(y\right)=\varphi\left(\frac{y}{d}+Q_{0}\right)$ 
and the fact that also $\varphi$ depends on $d$, we have that 
\be   \frac{\partial\Xi_{d}}{\partial d}\left(y\right) = 
        \frac{\partial\varphi}{\partial d}\left(\frac{y}{d}+Q_{0}\right)  -
        \frac{y}{d^{2}}\cdot\nabla\varphi\left(\frac{y}{d}+Q_{0}\right).  \ee
Since $\varphi$ is the solution of $(\ref{varphi})$, we can differentiate 
$(\ref{varphi})$ obtaining 
\bern
\left\{ 
\begin{array}{ll} 
         -\frac{1}{d^{2}}\Delta\frac{\partial\varphi}{\partial d} +      
          \frac{\partial\varphi}{\partial d} =  -\frac{2}{d^{3}}\Delta\varphi  
             = -\frac{2}{d}\varphi      \quad & \mathrm{in\ } \Sigma_{D},      \\
     \frac{\partial\varphi}{\partial d}\left(x\right)  
           = \nabla U\left(d\left(x -Q_{0}\right)\right)\cdot \left(x-Q_{0}\right)   \quad     &    \mathrm{on\ } \partial\Sigma_{D},  
\end{array} 
\right.         
 \label{varphi1}\eern 
Because of the asymptotic behavior of $U$ at infinity, 
there exists a positive constant $C_{D}$ such that 
for $d$ large we have
\be \frac{1}{C_{D}} U\left(d\left(x-Q_{0}\right)\right) \leq 
     -\nabla U\left(d\left(x-Q_{0}\right)\right)\cdot \left(x-Q_{0}\right) \leq 
     C_{D}  U\left(d\left(x-Q_{0}\right)\right).     \label{CD}\ee
Hence, from $(\ref{varphi})$, $(\ref{CD})$, the fact that $\varphi>0$ 
and the maximum principle we obtain that 
$\varsigma :=-\frac{\partial\varphi}{\partial d}\geq\frac{1}{C_{D}}\varphi$ 
in $\widehat{\Gamma}_{D}$. 
Moreover, as for ($\ref{logphi})$ we can check that the function 
$\Upsilon^{d}:=-\frac{1}{d}\log\varsigma$ satisfies 
\bern
\left\{ 
\begin{array}{ll} 
         \frac{1}{d}\Delta\Upsilon ^{d} + \vert\nabla\Upsilon ^{d}\vert ^{2} 
          + 1  -  \frac{\varphi}{d\varsigma}  =  0 
            \quad & \mathrm{in\ } \Sigma_{D},      \\
          \Upsilon ^{d}\left(x\right)  
           = -\frac{1}{d}\log\left(-\nabla U\left(d\left(x -Q_{0}\right)\right) \cdot \left(x-Q_{0}\right)\right)    \quad     &    \mathrm{on\ } \partial\Sigma_{D},  
\end{array} 
\right.         
 \label{Upsilon}\eern
Since $\frac{\varphi}{\varsigma}$ stays bounded, 
$\frac{\varphi}{d\varsigma}$ tends to zero as $d\rightarrow +\infty$. 
Moreover, using again the asymptotic behavior of $U$ at infinity, 
we can say that the boundary datum in $(\ref{Upsilon})$ converges 
in every smooth sense (where $\partial\Sigma_{D}$ is regular) 
to $\vert x-Q_{0}\vert$ as $d\rightarrow +\infty$. 
As a consequence, the previous analysis adapts to $\Upsilon ^{d}$ 
and allows to conclude that still 
\be  \Upsilon ^{d}\rightarrow\overline{\phi}   \quad  
    \mathrm{and}   \quad  \nabla\Upsilon ^{d}\rightarrow\nabla\overline{\phi} 
\label{Upsilonlimit}\ee
uniformly as $d\rightarrow +\infty$ in any set compactly contained 
in $\overline{\Sigma}_{D}$ on which $\nabla\overline{\phi}$ is defined.   
  
From $(\ref{varphi1})$, reasoning as for $(\ref{Xinorm})$, we have that
\be  \norm{\frac{\partial\varphi}{\partial d}\left(\frac{\cdot}{d} + Q_{0}\right)}_{H^{1}\left(d\left(\Sigma_{D}-Q_{0}\right)\right)} 
    \leq e^{-d\left(1+o\left(1\right)\right)}.       \label{varnorm}\ee 
On the other hand, from $(\ref{Xiprob})$ one finds that the function 
$\varpi :=\frac{y}{d^{2}}\cdot\nabla\varphi\left(\frac{y}{d}+Q_{0}\right)=\frac{y}{d}\cdot\nabla\Xi_{d}\left(y\right)$ satisfies 
\be    -\Delta\varpi + \varpi = -\frac{2}{d}\Xi_{d}  \qquad \mathrm{in\ }    d\left(\Sigma_{D}-Q_{0}\right).   \nonumber\ee 
To control the boundary value of $\varpi$ we divide 
$\partial d\left(\Sigma_{D}-Q_{0}\right)$ into its 
intersection with $\left\lbrace y_{n}=0\right\rbrace$ and its complement. 
In the first region we have simply that $\varpi=\frac{y}{d}\cdot\nabla U\left(y\right)$. 
In the second instead the estimates in $(\ref{Xipoint})$ and $(\ref{Xigradpoint})$ 
hold true, which shows that the $L^{2}$ norm of the trace of $\varpi$ 
on $\partial d\left(\Sigma_{D}-Q_{0}\right)$ is of order 
$e^{-d\left(1+o\left(1\right)\right)}+e^{-d\left[1+\frac{2\tan\alpha}{\sqrt{\tan^{2}\alpha+1}}\right]\left(1+o\left(1\right)\right)}$. 
This fact and the latter formula imply that 
\be  \norm{\varpi}_{H^{1}\left(d\left(\Sigma_{D}-Q_{0}\right)\right)} 
    \leq e^{-d\left(1+o\left(1\right)\right)}  + 
       e^{-d\left[1+\frac{2\tan\alpha}{\sqrt{\tan^{2}\alpha+1}}\right]\left(1+o\left(1\right)\right)}. \label{varnorm1}\ee 
Then, from $(\ref{varnorm})$ and $(\ref{varnorm1})$, we conclude that 
\be   \norm{\frac{\partial\Xi_{d}}{\partial d}}_{H^{1}\left(d\left(\Sigma_{D}-Q_{0}\right)\right)} 
    \leq e^{-d\left(1+o\left(1\right)\right)}  + 
        e^{-d\left[1+\frac{2\tan\alpha}{\sqrt{\tan^{2}\alpha+1}}\right]\left(1+o\left(1\right)\right)}. \label{Xidpoint}\ee 
Now, using the fact that $\varphi\leq C_{D}\vert\frac{\partial\varphi}{\partial d}\vert$ 
and $(\ref{Upsilonlimit})$, together with the Harnack inequality         
(which implies $\vert\nabla\varphi\vert\leq Cd\varphi$ in $d\left(V-Q_{0}\right)$) 
one also finds 
\bern    \frac{\partial\Xi_{d}}{\partial d}\left(y\right) =    
 -\exp \left[-\min \left\lbrace \sqrt{\left(y_{1} - \frac{2d\tan^{2}\alpha}{\tan^{2}\alpha + 1}\right)^{2}   +  \vert y'' \vert ^{2}   +    \left(y_{n} \mp \frac{2d\tan\alpha}{\tan^{2}\alpha + 1}\right)^{2}} \right\rbrace \right]  \nonumber\\
 \cdot e^{o\left(d\right)} \cdot \left(1 + o\left(\frac{\vert y\vert}{d}\right)\right), 
\label{dpoint}\eern
and 
\be   \vert \nabla \frac{\partial\Xi_{d}}{\partial d}\left(y\right)\vert  \leq    
 \exp \left[-\min \left\lbrace \sqrt{\left(y_{1} - \frac{2d\tan^{2}\alpha}{\tan^{2}\alpha + 1}\right)^{2}   +  \vert y'' \vert^{2}   +    \left(y_{n} \mp \frac{2d\tan\alpha}{\tan^{2}\alpha + 1}\right)^{2}} \right\rbrace \right]  \cdot e^{o\left(d\right)}, 
\label{dgradpoint}\ee
for $d\left(V-Q_{0}\right)$ and $d\rightarrow +\infty$.

\bigskip
 
After these preliminaries, we are now in position to introduce our approximate solutions. 
Let us define two smooth non negative cut-off functions $\chi_{D}:\R^{n}\rightarrow\R$, 
$\chi_{0}:\R\rightarrow\R$ satisfying respectively 
\bern
\left\{ 
\begin{array}{ll} 
           \chi_{D}\left(y\right) = 1 \qquad & \mathrm{for\ }   \vert y\vert\leq\frac{dD}{16},    \\
            \chi_{D}\left(y\right) = 0 \qquad  & \mathrm{for\ } \vert y\vert\geq\frac{dD}{8}, \\
            \vert\nabla\chi_{D}\vert \leq  \frac{32}{dD}    \qquad  & \mathrm{on\ }\R^{n},
 \end{array} 
\right.         
\label{chiD}\eern 
and 
\bern
\left\{ 
\begin{array}{ll} 
           \chi_{0}\left(y\right) = 1 \qquad & \mathrm{for\ }    y\leq 0,    \\
            \chi_{0}\left(y\right) = 0 \qquad  & \mathrm{for\ }  y\geq 1, \\
             \chi_{0}  \mathrm{\ is\ non\ increasing\ } \qquad  & \mathrm{on\ }\R .
 \end{array} 
\right.         
 \label{chi0}\eern 
Now, using the new coordinates $y$ in Subsection $\ref{apprNeumann}$, we define
\be      u_{\epsilon, Q}\left(y\right)  :=  \chi_{\mu_{0}}\left(\epsilon y\right)\left[ \left(U_{Q}\left(y\right) - \Xi_{d}\left(y\right)\right) \chi_{D}\left(y\right) +\epsilon w_{Q}\left(y\right) \chi_{0}\left(y_{1}-d\right) \right].     \label{apprfunc}\ee 
Following the line of \cite{GMMP1} 
we prove that the $u_{\epsilon, Q}$'s are good approximate solutions to $(\ref{problem1})$ for suitable conditions of $Q$.

\begin{proposition} 
Let $\mu_{0}$ be the constant appearing in Subsection $\ref{apprNeumann}$. 
Then there exists another constant $C_{\Omega}$, independent of $\epsilon$, 
such that, for $C_{\Omega}\leq d\leq\frac{1}{\epsilon C_{\Omega}}$ and for $Dd<\frac{\mu_{0}}{\epsilon C_{\Omega}}$, 
the functions $u_{\epsilon, Q}$ satisfy 
\bern     \norm{I'_{\epsilon}\left(u_{\epsilon, Q}\right)}  &\leq &  C\left(\epsilon^{2}  + \epsilon e^{-d\left(1+o\left(1\right)\right)}  
        + e^{-d\left[\frac{1}{2}\sqrt{\frac{D\tan\alpha\left(\tan\alpha +1\right)}{\tan^{2}\alpha +1}} + \frac{2\tan\alpha}{\sqrt{\tan^{2}\alpha +1}}\right]    \left(1+o\left(1\right)\right)}\right)   \nonumber\\
             &+& C\left(e^{-\frac{d\left(p+1\right)}{2}\left(1+o\left(1\right)\right)} 
             + e^{-d\left(\frac{p}{2} + \frac{\sqrt{2}\tan\alpha}{\sqrt{\tan^{2}\alpha +1}}\right)\left(1+o\left(1\right)\right)}\right), 
\label{estderiv}\eern
for a fixed $C>0$ and for $\epsilon$ sufficiently small.
\label{pr:est}\end{proposition}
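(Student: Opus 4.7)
The plan is to estimate $\|I'_\epsilon(u_{\epsilon,Q})\|_{(H^1_D(\Omega_\epsilon))^*}$ by testing against arbitrary $v \in H^1_D(\Omega_\epsilon)$ with $\|v\|_{H^1} \leq 1$ and taking the supremum. Integration by parts, using $v = 0$ on $\partial_D\Omega_\epsilon$, gives
\[
I'_\epsilon(u_{\epsilon,Q})[v] = \int_{\Omega_\epsilon}\bigl(-\Delta_g u_{\epsilon,Q} + u_{\epsilon,Q} - u_{\epsilon,Q}^p\bigr)v\,dV_g + \int_{\partial_N\Omega_\epsilon}\frac{\partial u_{\epsilon,Q}}{\partial\nu_g}\,v\,d\sigma_g,
\]
so it is enough to bound the bulk residual (in $L^{(p+1)/p}$, converted to a dual $H^1$ bound via the Sobolev embedding) together with the Neumann-trace residual. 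My plan is to compare $u_{\epsilon,Q}$ with the Neumann ansatz $\bar u_{\epsilon,Q}$ of (\ref{function}); writing
\[
u_{\epsilon,Q} = \bar u_{\epsilon,Q} - \chi_{\mu_0}(\epsilon y)\chi_D\Xi_d - \chi_{\mu_0}(\epsilon y)(1 - \chi_D)U_Q - \epsilon\chi_{\mu_0}(\epsilon y)\bigl(1 - \chi_0(y_1 - d)\bigr)w_Q,
\]
Proposition \ref{prop:estNeumann} already controls the contribution of $\bar u_{\epsilon,Q}$ by $C\epsilon^2$, so only the three correction pieces need to be analysed.

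For the principal correction $\chi_{\mu_0}(\epsilon y)\chi_D\Xi_d$, the identity $(-\Delta + 1)\Xi_d = 0$ on the support of $\chi_D$ collapses its linear contribution to the bulk residual to the commutator $-2\nabla\chi_D\cdot\nabla\Xi_d - \Xi_d\Delta\chi_D$, supported on the annulus $\{dD/16 \leq |y| \leq dD/8\}$, plus a term of order $\epsilon|y|^2$ arising from the metric deviation $g_{ij} - \delta_{ij}$ described in Remark \ref{rem:gij}. Combining the cutoff bounds $|\nabla\chi_D| + |\nabla^2\chi_D| \lesssim (dD)^{-2}$ with the pointwise estimates (\ref{Xipoint})--(\ref{Xigradpoint}) on $|y| \sim dD/16$ yields the third exponential of (\ref{estderiv}), its exponent being the value of $\bar\Phi(y/d + Q_0)$ attained on the transition annulus, as made explicit by Proposition \ref{prop:asymptotic} and Lemma \ref{lem:viscosity2}. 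For the Neumann-trace integral, the even-in-$x_n$ construction of $\Sigma_D$ in Subsection \ref{sec:3.1} forces $\partial_{x_n}\Xi_d \equiv 0$ on the flat model $\{x_n = 0\}$; the true $\partial_N\Omega_\epsilon$ deviates from this model by $O(\epsilon|y|^2)$ by Remark \ref{rem:gij}, which combined with $|\nabla\Xi_d| \lesssim e^{-d(1+o(1))}$ produces the $\epsilon e^{-d(1+o(1))}$ contribution. The two cutoff remainders $\chi_{\mu_0}(1-\chi_D)U_Q$ and $\epsilon\chi_{\mu_0}(1-\chi_0(y_1-d))w_Q$ are supported where $U_Q$ and $w_Q$ (together with their derivatives, by (\ref{limU}) and (\ref{est w})) are pointwise $O(e^{-d(1+o(1))})$ and so are absorbed into the same exponential.

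The delicate ingredient is the nonlinear residual. In the region where all cutoffs equal one, writing $u_{\epsilon,Q} = \bar u_{\epsilon,Q} - \Xi_d$ and Taylor-expanding gives
\[
u_{\epsilon,Q}^p - \bar u_{\epsilon,Q}^p = -p\,\bar u_{\epsilon,Q}^{p-1}\Xi_d + \mathrm{Rem},
\]
with $\mathrm{Rem}$ pointwise bounded by $\Xi_d^2 U_Q^{p-2} + \Xi_d^p$ when $p \geq 2$ (with the analogous integral-form remainder when $1 < p < 2$). After pairing the main term $p\,\bar u_{\epsilon,Q}^{p-1}\Xi_d$ with $v$ and applying H\"older together with the Sobolev embedding, the question reduces to estimating suitable $L^q$ norms of $U_Q^{p-1}\Xi_d$. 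Using $U_Q(y) \asymp (1+|y|)^{-(n-2)/2}e^{-|y|}$ from (\ref{limU}) and $\Xi_d(y) \asymp \exp(-d\,\bar\Phi(y/d + Q_0)(1+o(1)))$ from (\ref{Xipoint}), a Laplace-type optimization of the exponent $(p-1)|y| + d\,\bar\Phi(y/d + Q_0)$, performed separately along each of the two branches $d_1,d_2$ of Proposition \ref{prop:asymptotic}, produces two distinct extremal directions --- one along the Neumann symmetry hyperplane $\{x_n = 0\}$ and one toward the reflection point $Q_1$ (or $Q_2$) --- whose saddle values yield respectively the exponents $(p+1)/2$ and $p/2 + \sqrt{2}\tan\alpha/\sqrt{\tan^2\alpha + 1}$ appearing in (\ref{estderiv}); the remainder $\mathrm{Rem}$ and the cross contribution generated by the cutoff $(1 - \chi_0(y_1 - d))w_Q$ are of strictly lower order and absorbed. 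The main obstacle I expect is executing this Laplace analysis rigorously: $\bar\Phi$ is only Lipschitz across the ridge $\{x_n = 0\}$, so one must use the explicit formulas (\ref{d1})--(\ref{d2}) together with the uniform gradient convergence $\nabla\Phi^d \to \nabla\bar\Phi$ of Lemma \ref{lem:stima2} to identify the saddle points, and then quantify the subexponential prefactors uniformly in $d \in [C_\Omega, 1/(\epsilon C_\Omega)]$.
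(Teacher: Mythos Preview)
Your approach is essentially the same as the paper's, though organized a bit differently. The paper never integrates by parts: it works directly in weak form, writing
\[
I'_\epsilon(u_{\epsilon,Q})[v] = I'_\epsilon(\bar u_{\epsilon,Q})[v] + A_1 + A_2,
\]
with $A_1 = \int(\nabla_g\check u_{\epsilon,Q}\cdot\nabla_g v + \check u_{\epsilon,Q}v)$ and $A_2 = \int(\bar u_{\epsilon,Q}^p - u_{\epsilon,Q}^p)v$. Your bulk--plus--Neumann-trace decomposition is equivalent; in the paper the Neumann boundary term is handled implicitly in the $A_{1,2}$ step, where the identity $\int_{\Omega_\epsilon}(\nabla\Xi_d\cdot\nabla v + \Xi_d v) = 0$ uses that in the straightened coordinates $\partial_N\Omega_\epsilon$ is exactly $\{y_n=0\}$ and $\Xi_d$ is even in $y_n$, so $\partial_{y_n}\Xi_d=0$ there without an $O(\epsilon|y|^2)$ correction. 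The linear analysis (commutator on the cutoff annulus, metric deviation) matches.

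Where you diverge is the nonlinear piece. The paper does \emph{not} perform any Laplace-type optimization of $(p-1)|y| + d\,\bar\Phi(y/d+Q_0)$. Instead it splits crudely into $B_1=\{|y|<\min(d/2,\epsilon^{-c_{K,n}})\}$ and its complement $B_2$: on $B_1$ one bounds $|\Xi_d|$ by its supremum there and uses $\int_{B_1}|\bar u_{\epsilon,Q}|^{p-1}|v|\leq C\|v\|$; on $B_2$ one bounds $|\bar u_{\epsilon,Q}|\leq Ce^{-d/2}$ and uses the global $\|\Xi_d\|_{H^1}\leq e^{-d(1+o(1))}$. The exponent $(p+1)/2$ arises from the $B_2$ product $e^{-(p-1)d/2}\cdot e^{-d}$, not from any saddle of the interaction integrand. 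Your Laplace strategy could in principle be made to work, but your claim that it produces the two stated exponents via two extremal directions is not quite right: the unconstrained minimum of $(p-1)|y|+|y-Q'_1|$ is either $|Q'_1|=2d\sin\alpha$ (for $p\geq 2$) or $(p-1)|Q'_1|$ (for $p<2$), neither of which equals $d(p/2+\sqrt{2}\sin\alpha)$, and there is no genuine saddle along $\{y_n=0\}$ giving $(p+1)/2$. The paper's domain split is both simpler and avoids this difficulty entirely.
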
 
\begin{proof} 
Using the coordinates $y$, we can split 
$u_{\epsilon, Q}\left(y\right)=\bar{u}_{\epsilon, Q}\left(y\right)+\check{u}_{\epsilon, Q}\left(y\right)$, 
where $\bar{u}_{\epsilon, Q}$ is defined in $(\ref{function})$ and 
\be   \check{u}_{\epsilon, Q}\left(y\right) = 
     \chi_{\mu_{0}}\left(\epsilon y\right)\left[\left(\chi_{D}\left(y\right)-1\right) U_{Q}\left(y\right) - \chi_{D}\left(y\right) \Xi_{d}\left(y\right)  +\epsilon \left(\chi_{0}\left(y_{1}-d\right)-1\right) w_{Q}\left(y\right)\right].     \label{funct2}\ee
Then, if we test the gradient of $I_{\epsilon}$ at $u_{\epsilon, Q}$ 
on any function $v\in H^{1}_{D}\left(\Omega_{\epsilon}\right)$, 
we obtain
\bern     
   I'_{\epsilon}\left(u_{\epsilon, Q}\right)\left[v\right]  &=& 
        \int_{\Omega_{\epsilon}}\left(\nabla_{g}u_{\epsilon, Q}\nabla_{g}v 
            + u_{\epsilon, Q}v\right) dy  -  
           \int_{\Omega_{\epsilon}} u^{p}_{\epsilon, Q}v dy    \nonumber\\
     &=&    \int_{\Omega_{\epsilon}}\left(\nabla_{g}\bar{u}_{\epsilon, Q}\nabla_{g}v 
            + \bar{u}_{\epsilon, Q}v\right) dy  -  
           \int_{\Omega_{\epsilon}} \bar{u}^{p}_{\epsilon, Q}v dy  \nonumber\\
      &+& \int_{\Omega_{\epsilon}}\left(\nabla_{g}\check{u}_{\epsilon, Q}\nabla_{g}v 
            + \check{u}_{\epsilon, Q}v\right) dy  -  
           \int_{\Omega_{\epsilon}} \left( \bar{u}^{p}_{\epsilon, Q} - 
           u^{p}_{\epsilon, Q}\right)  v dy    \nonumber\\
     &=&  I'_{\epsilon}\left(\bar{u}_{\epsilon, Q}\right)\left[v\right]  
           +  A_{1}   + A_{2}, 
\label{grad}\eern
where
\be  A_{1} =  \int_{\Omega_{\epsilon}}\left(\nabla_{g}\check{u}_{\epsilon, Q}\nabla_{g}v 
            + \check{u}_{\epsilon, Q}v\right) dy; 
            \qquad 
     A_{2} =  \int_{\Omega_{\epsilon}} \left( \bar{u}^{p}_{\epsilon, Q} - 
           u^{p}_{\epsilon, Q}\right)  v dy.  \nonumber\ee
By Proposition $\ref{prop:estNeumann}$ and in particular by $(\ref{estNeumann})$ 
we have that $I'_{\epsilon}\left(\bar{u}_{\epsilon, Q}\right)\left[v\right]$ 
is of order at most $\epsilon^{2}$. 
Hence we only need to estimate $A_{1}$ and $A_{2}$ in the last line of $(\ref{grad})$. 

To estimate $A_{1}$ we divide further 
$\check{u}_{\epsilon, Q}=\check{u}_{\epsilon, Q, 1}+\check{u}_{\epsilon, Q, 2}+\check{u}_{\epsilon, Q, 3}$, where
\be  \check{u}_{\epsilon, Q, 1}\left(y\right) =  
     \chi_{\mu_{0}}\left(\epsilon y\right)\left(\chi_{D}\left(y\right)-1\right) 
       U_{Q}\left(y\right);    \qquad
       \check{u}_{\epsilon, Q, 2}\left(y\right) = 
         \chi_{\mu_{0}}\left(\epsilon y\right) 
          \chi_{D}\left(y\right) \Xi_{d}\left(y\right);   \nonumber\ee
\be  \check{u}_{\epsilon, Q, 3}\left(y\right) = \chi_{\mu_{0}}\left(\epsilon y\right) 
          \epsilon \left(\chi_{0}\left(y_{1}-d\right)-1\right) w_{Q}\left(y\right). 
\nonumber\ee
Then we write $A_{1}=A_{1,1}+A_{1,2}+A_{1,3}$, with
\be   A_{1,i} = \int_{\Omega_{\epsilon}}\left(\nabla_{g}\check{u}_{\epsilon, Q,i}\nabla_{g}v  + \check{u}_{\epsilon, Q,i}v\right) dy,    
    \qquad   i=1,2,3. 
\nonumber\ee
Since $\chi_{D}\left(y\right)$ is identically equal to $1$ for 
$\vert y\vert\leq\frac{dD}{16}$ and since $\chi_{0}\left(y_{1}-d\right)-1=0$ 
for $y_{1}\leq d$, from $(\ref{limU})$ and $(\ref{est w})$ we get 
\be   \vert A_{1,1}\vert \leq e^{-\frac{dD}{16}\left(1+o\left(1\right)\right)} 
         \norm{v}_{H^{1}_{D}\left(\Omega_{\epsilon}\right)};    \qquad 
      \vert A_{1,3}\vert \leq C\epsilon\left(1+\vert d\vert^{K}\right) e^{-d} 
        \norm{v}_{H^{1}_{D}\left(\Omega_{\epsilon}\right)}. 
\label{grad1}\ee
To control $A_{1,2}$ we write that
\bern 
  A_{1,2} &=& \int_{\Omega_{\epsilon}}\left(\nabla_{g}\check{u}_{\epsilon, Q,2}\nabla_{g}v  + \check{u}_{\epsilon, Q,2}v\right) dy  
          = \int_{\Omega_{\epsilon}}\left(g^{ij}\partial_{i}\check{u}_{\epsilon, Q,2}\partial_{j}v  + \check{u}_{\epsilon, Q,2}v\right) dy     \nonumber\\
          &=& \int_{\Omega_{\epsilon}}\left(\nabla\check{u}_{\epsilon, Q,2}\nabla v  + \check{u}_{\epsilon, Q,2}v\right) dy   
           +   \int_{\Omega_{\epsilon}}\left(g^{ij}-\delta^{ij}\right)  \partial_{i}\check{u}_{\epsilon, Q,2}\partial_{j}v   dy.
\nonumber\eern
From the condition $(c)$ in Subsection $2.2$ we have that 
$\vert g^{ij}-\delta^{ij}\vert\leq C\epsilon\vert y\vert$; then
\be   \vert A_{1,2} - \int_{\Omega_{\epsilon}}\left(\nabla\check{u}_{\epsilon, Q,2}\nabla v  + \check{u}_{\epsilon, Q,2}v\right) dy \vert \leq 
     C\epsilon \left(\int_{\Omega_{\epsilon}} \vert y\vert^{2} 
         \vert \nabla \check{u}_{\epsilon, Q,2} \vert^{2} dy\right)^{\frac{1}{2}}
      \norm{v}_{H^{1}_{D}\left(\Omega_{\epsilon}\right)}. \nonumber\ee 
Since the support of $\check{u}_{\epsilon, Q,2}$ is contained in the set 
$\left\lbrace\vert y\vert\leq\frac{dD}{8}\right\rbrace$, 
we obtain from the last formula and $(\ref{Xinorm})$ that
\be   \vert A_{1,2} - \int_{\Omega_{\epsilon}}\left(\nabla\check{u}_{\epsilon, Q,2}\nabla v  + \check{u}_{\epsilon, Q,2}v\right) dy \vert \leq 
     C\epsilon d D  e^{-d\left(1+o\left(1\right)\right)} 
      \norm{v}_{H^{1}_{D}\left(\Omega_{\epsilon}\right)}. \nonumber\ee 
Now, since $\Xi_{d}$ satisfies $(\ref{Xiprob})$, we have 
\bern 
   \int_{\Omega_{\epsilon}}\left(\nabla\check{u}_{\epsilon, Q,2}\nabla v  + \check{u}_{\epsilon, Q,2}v\right) dy  =   \nonumber\\
    \int_{\Omega_{\epsilon}} \left(\nabla\left(\Xi_{d}\left(y\right) 
    \left(\chi_{\mu_{0}}\left(\epsilon y\right)\chi_{D}\left(y\right)-1\right)\right) 
       \nabla v    +   \Xi_{d}\left(y\right) 
       \left(\chi_{\mu_{0}}\left(\epsilon y\right)\chi_{D}\left(y\right)-1\right) v 
       \right) dy. 
\label{grad2}\eern 
Since also $Dd<\frac{1}{C_{\Omega}}\frac{\mu_{0}}{\epsilon}$, 
the function $\chi_{\mu_{0}}\left(\epsilon y\right)\chi_{D}\left(y\right)-1$ is 
identically zero in the set $\left\lbrace\vert y\vert\leq\frac{dD}{16}\right\rbrace$ 
if $C_{\Omega}$ is sufficiently large. 
Then, using $(\ref{Xipoint})$, $(\ref{Xigradpoint})$ and the H\"{o}lder 
inequality, we find that (also for $D$ large) 
\bern
  \vert  \int_{\Omega_{\epsilon}} \left(\nabla\left(\Xi_{d}\left(y\right) 
    \left(\chi_{\mu_{0}}\left(\epsilon y\right)\chi_{D}\left(y\right)-1\right)\right) 
       \nabla v    +   \Xi_{d}\left(y\right) 
       \left(\chi_{\mu_{0}}\left(\epsilon y\right)\chi_{D}\left(y\right)-1\right) v 
       \right) dy \vert   \nonumber\\
       \leq  e^{-\left[\frac{dD}{16}+ \frac{d}{2} \sqrt{\frac{D\tan\alpha\left(\tan\alpha+1\right)}{\tan^{2}\alpha +1}} + 
       \frac{2d\tan\alpha}{\sqrt{\tan^{2}\alpha}+1}\right] \left(1+o\left(1\right)\right)} 
      \norm{v}_{H^{1}_{D}\left(\Omega_{\epsilon}\right)}. \label{grad22}\eern 
The last three formulas imply 
\be   \vert A_{1,2}\vert \leq  
     C\left( \epsilon d D  e^{-d\left(1+o\left(1\right)\right)}  + 
        e^{-\left[\frac{dD}{16}+ \frac{d}{2} \sqrt{\frac{D\tan\alpha\left(\tan\alpha+1\right)}{\tan^{2}\alpha +1}} + 
       \frac{2d\tan\alpha}{\sqrt{\tan^{2}\alpha}+1}\right] \left(1+o\left(1\right)\right)} \right) 
       \norm{v}_{H^{1}_{D}\left(\Omega_{\epsilon}\right)}. \nonumber\ee 
From $(\ref{grad1})$ and the latter formula it follows that 
\bern    \vert A_{1} \vert \leq C \left( 
            \epsilon d D  e^{-d\left(1+o\left(1\right)\right)}   + 
            e^{-\left[\frac{dD}{16}+ \frac{d}{2} \sqrt{\frac{D\tan\alpha\left(\tan\alpha+1\right)}{\tan^{2}\alpha +1}} + 
       \frac{2d\tan\alpha}{\sqrt{\tan^{2}\alpha}+1}\right] \left(1+o\left(1\right)\right)}  +  \epsilon \left(1+\vert d\vert^{K}\right) e^{-d} 
       \right) \nonumber\\
        \cdot\norm{v}_{H^{1}_{D}\left(\Omega_{\epsilon}\right)}. \label{grad3}\eern

It remains to estimate $A_{2}$. 
First of all, let us recall that the following inequality holds:
\be    
    \vert \bar{u}^{p}_{\epsilon, Q} - u^{p}_{\epsilon, Q} \vert \leq 
    \left\{ 
\begin{array}{ll} 
   C \vert \bar{u}_{\epsilon, Q}\vert^{p-1} \vert \check{u}_{\epsilon, Q}\vert  \qquad & \mathrm{for\ }  \check{u}_{\epsilon, Q} \in \left(0, \frac{1}{2}\bar{u}_{\epsilon, Q}\right),    \\
    C \vert \bar{u}_{\epsilon, Q}\vert^{p-1} \vert \check{u}_{\epsilon, Q}\vert 
    + C\vert \check{u}_{\epsilon, Q}\vert^{p}   \qquad  & \mathrm{otherwise},
 \end{array} 
\right.         
\label{grad4}\ee
for a fixed constant $C$ depending only on $p$. 
Moreover, using $(\ref{limU})$ and $(\ref{est w})$, we can say that there exists 
a small constant $c_{K,n}$ such that 
\be    \bar{u}_{\epsilon, Q}\left(y\right) \geq \frac{7}{8} 
         \frac{e^{-\vert y\vert}}{1+\vert y\vert^{\frac{n-1}{2}}}; 
         \qquad   \mathrm{for\ } \vert y\vert \leq \frac{1}{\epsilon^{c_{K,n}}}. 
\nonumber\ee
We divide next $\Omega_{\epsilon}$ into the two regions
\be     B_{1} = \left\lbrace \vert y\vert < \min\left\lbrace \frac{d}{2}, \frac{1}{\epsilon^{c_{K,n}}}\right\rbrace \right\rbrace ;    \qquad 
         B_{2} = \Omega_{\epsilon}\setminus B_{1}. \nonumber\ee
For $y\in B_{1}$ we have that $\chi_{\mu_{0}}\left(\epsilon y\right)\equiv 1$, 
$\chi_{D}\left(y\right)\equiv 1$, $\chi_{0}\left(y_{1}-d\right)\equiv 1$, 
and hence $\check{u}_{\epsilon, Q}\left(y\right)\equiv -\Xi_{d}\left(y\right)$. 
By $(\ref{Xipoint})$ we have also that 
$\vert\check{u}_{\epsilon, Q}\left(y\right)\vert = \vert\Xi_{d}\left(y\right)\vert \leq e^{-\frac{d}{2}-\frac{\sqrt{2}d\tan\alpha}{\sqrt{\tan^{2}\alpha +1}}+o\left(d\right)}<\frac{1}{2}\bar{u}_{\epsilon, Q}$ for $y\in B_{1}$. 
This fact, $(\ref{grad4})$ and the H\"{o}lder inequality yield 
\be 
      \int_{B_{1}} \vert \bar{u}^{p}_{\epsilon, Q} - u^{p}_{\epsilon, Q} \vert \vert v\vert dy 
       \leq  C  \int_{B_{1}} \vert \bar{u}_{\epsilon, Q}\vert^{p-1} \vert \check{u}_{\epsilon, Q}\vert \vert v\vert dy 
       \leq C e^{-\frac{d}{2}-\frac{\sqrt{2}d\tan\alpha}{\sqrt{\tan^{2}\alpha +1}}+o\left(d\right)} 
       \norm{v}_{H^{1}_{D}\left(\Omega_{\epsilon}\right)}. \nonumber\ee
On the other hand , in $B_{2}$ we have that 
$\vert\bar{u}^{p}_{\epsilon, Q}\vert <C\left(e^{-\frac{d}{2}+o\left(d\right)} + 
e^{-\frac{1+o\left(1\right)}{\epsilon^{c_{K,n}}}}\right)$ and that 
$\vert\check{u}_{\epsilon, Q}\vert\leq e^{-d+o\left(d\right)}$; 
therefore $(\ref{grad4})$ and the H\"{o}lder inequality 
imply again 
\be  \int_{B_{2}} \vert \bar{u}^{p}_{\epsilon, Q} - u^{p}_{\epsilon, Q} \vert \vert v\vert dy   \leq 
C \left[  \left( e^{-\frac{\left(p-1\right)d}{2}+o\left(d\right)} + 
e^{-\frac{p-1+o\left(1\right)}{\epsilon^{c_{K,n}}}}\right) e^{-d+o\left(d\right)} 
+  e^{-pd+o\left(d\right)}\right] 
\norm{v}_{H^{1}_{D}\left(\Omega_{\epsilon}\right)}. \nonumber\ee
The last two formulas provide 
\be    \vert A_{2}\vert \leq C \left[ e^{-\frac{dp}{2}-\frac{\sqrt{2}d\tan\alpha}{\sqrt{\tan^{2}\alpha +1}}+o\left(d\right)} +  e^{-pd+o\left(d\right)} + 
\left( e^{-\frac{\left(p-1\right)d}{2}+o\left(d\right)} + 
e^{-\frac{p-1+o\left(1\right)}{\epsilon^{c_{K,n}}}}\right) e^{-d+o\left(d\right)} 
\right] \norm{v}_{H^{1}_{D}\left(\Omega_{\epsilon}\right)}. \label{grad5}\ee
Finally, we obtain the conclusion from $(\ref{estNeumann})$, $(\ref{grad})$, 
$(\ref{grad3})$ and $(\ref{grad5})$. 
\end{proof}

We have next another estimate for the functional $I_{\epsilon}$, 
which allows to say that the condition $ii)$ in Subsection $\ref{perturbation}$ holds true 
for $I_{\epsilon}$ and the manifold of the $u_{\epsilon, Q}$'s.

\begin{proposition} 
Let $\mu_{0}$ be the constant appearing in Subsection $\ref{apprNeumann}$. 
Then there exists another constant $C_{\Omega}$, independent of $\epsilon$, 
such that, for $C_{\Omega}\leq d\leq\frac{1}{\epsilon C_{\Omega}}$ and for $Dd<\frac{\mu_{0}}{\epsilon C_{\Omega}}$, 
the functions $u_{\epsilon, Q}$ satisfy 
\bern     \norm{I''_{\epsilon}\left(u_{\epsilon, Q}\right)\left[q\right]}  &\leq &
 C \left(\epsilon^{2}  + \epsilon e^{-d\left(1+o\left(1\right)\right)}  
        + e^{-d\left[\frac{1}{2}\sqrt{\frac{D\tan\alpha\left(\tan\alpha +1\right)}{\tan^{2}\alpha +1}} + \frac{2\tan\alpha}{\sqrt{\tan^{2}\alpha +1}}\right]    \left(1+o\left(1\right)\right)}\right) \norm{q}  \nonumber\\
             &+& C\left(e^{-\frac{d\left(p+1\right)}{2}\left(1+o\left(1\right)\right)} 
             + e^{-d\left(\frac{p}{2} + \frac{\sqrt{2}\tan\alpha}{\sqrt{\tan^{2}\alpha +1}}\right)\left(1+o\left(1\right)\right)}\right) \norm{q}, 
\label{estderiv2}\eern
for some fixed $C>0$ and for $\epsilon$ sufficiently small. 
In the above formula $q$ represents a vector in $H^{1}_{D}\left(\Omega_{\epsilon}\right)$ 
which is tangent to the manifold of the $u_{\epsilon, Q}$'s (when $Q$ varies).
\label{pr:est2}\end{proposition}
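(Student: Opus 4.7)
The plan is to mirror the proof of Proposition $\ref{pr:est}$ closely, exploiting that $I''_{\epsilon}(u_{\epsilon,Q})[q]$ can be realized as a $Q$-derivative of $I'_{\epsilon}(u_{\epsilon,Q})$ along the manifold. More precisely, choose a smooth curve $s\mapsto Q(s)$ in the parameter space with $Q(0)=Q$ and $q=\partial_{s}u_{\epsilon,Q(s)}\vert_{s=0}$; then for every test function $v\in H^{1}_{D}(\Omega_{\epsilon})$ one has
\be I''_{\epsilon}(u_{\epsilon,Q})[q](v) = \partial_{s} I'_{\epsilon}(u_{\epsilon,Q(s)})[v]\big\vert_{s=0}, \nonumber\ee
so that bounding the operator norm of $I''_{\epsilon}(u_{\epsilon,Q})[q]$ reduces to differentiating term-by-term the decomposition used in Proposition $\ref{pr:est}$ and verifying that each bound is preserved up to subexponential factors which get absorbed in the $(1+o(1))$ corrections appearing in the exponents.

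Following Proposition $\ref{pr:est}$ I would split $u_{\epsilon,Q}=\bar{u}_{\epsilon,Q}+\check{u}_{\epsilon,Q}$ with $\check{u}_{\epsilon,Q}=\check{u}_{\epsilon,Q,1}+\check{u}_{\epsilon,Q,2}+\check{u}_{\epsilon,Q,3}$ as in $(\ref{funct2})$. The Neumann part $\partial_{s}I'_{\epsilon}(\bar{u}_{\epsilon,Q(s)})[v]\vert_{s=0}$ is controlled by the second-derivative analogue of Proposition $\ref{prop:estNeumann}$, which by the standard arguments of \cite{AM} and \cite{GMMP1} yields an $O(\epsilon^{2})\norm{q}$ bound. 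For the correction pieces $A_{1,1}, A_{1,2}, A_{1,3}$, the derivative in $s$ hits either the cut-offs $\chi_{\mu_{0}},\chi_{D},\chi_{0}$ (whose $Q$- and $d$-derivatives only translate the supports and stay pointwise bounded), the functions $U_{Q}$ and $w_{Q}$ (whose $Q$-derivatives obey the same exponential decay as in $(\ref{est w})$), or $\Xi_{d}$ (whose $d$-derivative is controlled by $(\ref{Xidpoint})$, $(\ref{dpoint})$, $(\ref{dgradpoint})$). Since these last estimates are of the same exponential order as the corresponding ones for $\Xi_{d}$ itself, each differentiated piece reproduces the bound of Proposition $\ref{pr:est}$ with at most polynomial factors of $d$, which are absorbed by the $(1+o(1))$ corrections.

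The main obstacle is controlling the differentiated nonlinear contribution coming from $A_{2}$, which produces both a principal linear term $-p\int u^{p-1}_{\epsilon,Q} q v\, dy$ and the $s$-derivative of $(\bar{u}^{p}_{\epsilon,Q(s)}-u^{p}_{\epsilon,Q(s)})$. I would split $\Omega_{\epsilon}$ into the two regions $B_{1}=\{\vert y\vert<\min(d/2,\epsilon^{-c_{K,n}})\}$ and $B_{2}=\Omega_{\epsilon}\setminus B_{1}$ used in Proposition $\ref{pr:est}$. In $B_{1}$, since $\vert\check{u}_{\epsilon,Q}\vert$ equals $\Xi_{d}$ and by $(\ref{Xipoint})$ is exponentially smaller than $\bar{u}_{\epsilon,Q}$, the same smallness transfers to $\partial_{s}\check{u}_{\epsilon,Q}$ via the pointwise bound $(\ref{dpoint})$; combining the inequality $(\ref{grad4})$ with the H\"older inequality applied to both $q$ and $v$ produces contributions of order $e^{-\frac{d(p+1)}{2}(1+o(1))}+e^{-d(\frac{p}{2}+\frac{\sqrt{2}\tan\alpha}{\sqrt{\tan^{2}\alpha+1}})(1+o(1))}$. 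In $B_{2}$ the pointwise values of $\bar{u}_{\epsilon,Q}$ are themselves exponentially small and the resulting contribution is absorbed by the other terms. Summing all pieces yields $(\ref{estderiv2})$ with right-hand side identical to the one in Proposition $\ref{pr:est}$ multiplied by $\norm{q}$, as required.
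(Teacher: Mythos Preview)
Your approach is essentially the same as the paper's: both realize $I''_{\epsilon}(u_{\epsilon,Q})[q,v]$ as $\partial_{Q}\bigl(I'_{\epsilon}(u_{\epsilon,Q})[v]\bigr)$ and then differentiate term by term the decomposition from Proposition~\ref{pr:est}, using the bounds $(\ref{Xidpoint})$, $(\ref{dpoint})$, $(\ref{dgradpoint})$ for $\partial_{d}\Xi_{d}$ and the $B_{1}/B_{2}$ splitting for the nonlinear piece. The paper organizes the computation by treating separately variations $Q_{T}$ tangent to the level sets of $d$ (where only $w_{Q}$ and the metric coefficients $g^{ij}$ depend on $Q$, giving an $O(\epsilon^{2})$ bound directly via Remark~\ref{rem:gij}) and variations $Q_{d}$ along $\nabla d$ (where the four-term expression $(\ref{sec4})$ for $\partial_{Q_{d}}u_{\epsilon,Q}$ is written out explicitly); your sketch omits the $\partial_{Q}g^{ij}$ contribution and mentions differentiating $U_{Q}$, which in the adapted coordinates does not actually depend on $Q$, but these are minor inaccuracies that do not affect the final estimate.
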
  
\begin{proof} 
Since the arguments are quite similar to those in the proof 
of Proposition $\ref{pr:est}$, we will be rather quick. 
Using the fact that $\det\left(g^{ij}\right)=1$ and the first line in $(\ref{grad})$, 
for any given test function $v\in H^{1}_{D}\left(\Omega_{\epsilon}\right)$ 
we can write that 
\be  I'_{\epsilon}\left(u_{\epsilon, Q}\right)\left[v\right]  =  
   \sum_{i,j}  \int_{\R^{n}_{+}} \left( g^{ij} \partial_{i}u_{\epsilon, Q} 
    \partial_{j}v + u_{\epsilon, Q}v\right) dy   - 
     \int_{\R^{n}_{+}}  u^{p}_{\epsilon, Q}v dy. 
\nonumber\ee
We want to differentiate next with respect to the parameter $Q$, 
taking first a variation $q_{T}$ of the point $Q$ for which $d$ stays fixed, 
namely we take the tangential derivative to the level set of the distance $d$ 
to the interface. 
Let us notice that in the above formula the dependence on $Q$ is in the metric 
coefficients $g^{ij}$ and in the function $w_{Q}$ appearing in 
the expression of $u_{\epsilon, Q}$ (see $\ref{apprfunc})$. 
Therefore we obtain 
\bern  
     \frac{\partial}{\partial Q_{T}} 
      I'_{\epsilon}\left(u_{\epsilon, Q}\right)\left[v\right]  &=& 
      I''_{\epsilon}\left(u_{\epsilon, Q}\right) 
           \left[\frac{\partial u_{\epsilon, Q}}{\partial Q_{T}}, v\right]  =
      \sum_{i,j}  \int_{\R^{n}_{+}} \frac{\partial g^{ij}}{\partial Q_{T}} 
           \partial_{i}u_{\epsilon, Q_{T}} \partial_{j}v  dy    \nonumber\\
      &+&  \sum_{i,j}  \int_{\R^{n}_{+}}  \left( g^{ij}  
           \partial_{i}\frac{\partial u_{\epsilon, Q}}{\partial Q_{T}} \partial_{j}v 
           +  \frac{\partial u_{\epsilon, Q}}{\partial Q_{T}} v\right) dy 
           -  p \int_{\R^{n}_{+}} u^{p-1}_{\epsilon, Q} 
             \frac{\partial u_{\epsilon, Q}}{\partial Q_{T}} v dy. 
\label{sec1}\eern
From Remark $\ref{rem:gij}$ $(ii)$ we have that $\frac{\partial g^{ij}}{\partial Q_{T}}$ is of order 
$\epsilon^{2}\vert y\vert$. 
Moreover, computing the expression of $\frac{\partial u_{\epsilon, Q}}{\partial Q_{T}}$ 
we obtain $\frac{\partial u_{\epsilon, Q}}{\partial Q_{T}} = \epsilon\chi_{\mu_{0}}\left( \epsilon y\right)\chi_{0}\left(y_{1}-d\right)\frac{\partial w_{Q}}{\partial Q_{T}} = o\left(\epsilon^{2}\left(1+\vert y\vert^{K}\right)e^{-\vert y\vert}\right)$, see Subsection $2.2$ in \cite{GMMP1}. 
Reasoning as in the proof of Proposition $\ref{pr:est}$ we then have 
\be    \norm{\frac{\partial}{\partial Q_{T}} 
      I'_{\epsilon}\left(u_{\epsilon, Q}\right)\left[v\right]} \leq 
      C \epsilon^{2} \norm{v}_{H^{1}_{D}\left(\Omega_{\epsilon}\right)}  \qquad 
      \mathrm{for\ every\ }  v\in H^{1}_{D}\left(\Omega_{\epsilon}\right). 
\label{sec2}\ee

On the other hand, when we take a variation $q_{d}$ of $Q$ along the gradient of $d$, 
similarly to $(\ref{sec1})$ we get
\bern  
     \frac{\partial}{\partial Q_{d}} 
      I'_{\epsilon}\left(u_{\epsilon, Q}\right)\left[v\right]  &=& 
      I''_{\epsilon}\left(u_{\epsilon, Q}\right) 
           \left[\frac{\partial u_{\epsilon, Q}}{\partial Q_{d}}, v\right]  =
      \sum_{i,j}  \int_{\R^{n}_{+}} \frac{\partial g^{ij}}{\partial Q_{d}} 
           \partial_{i}u_{\epsilon, Q_{d}} \partial_{j}v  dy    \nonumber\\
      &+&  \sum_{i,j}  \int_{\R^{n}_{+}}  \left( g^{ij}  
           \partial_{i}\frac{\partial u_{\epsilon, Q}}{\partial Q_{d}} \partial_{j}v 
           +  \frac{\partial u_{\epsilon, Q}}{\partial Q_{d}} v\right) dy 
           -  p \int_{\R^{n}_{+}} u^{p-1}_{\epsilon, Q} 
             \frac{\partial u_{\epsilon, Q}}{\partial Q_{d}} v dy. 
\label{sec3}\eern 
Concerning the derivatives of $g^{ij}$ with respect to $Q_{d}$ we can argue exactly 
as for $Q_{T}$, to find 
\be    \vert \sum_{i,j}  \int_{\R^{n}_{+}} \frac{\partial g^{ij}}{\partial Q_{d}} 
           \partial_{i}u_{\epsilon, Q_{d}} \partial_{j}v  dy \vert \leq 
           C \epsilon^{2} \norm{v}_{H^{1}_{D}\left(\Omega_{\epsilon}\right)}. 
\nonumber\ee
Now, computing the derivative of $u_{\epsilon, Q}$ with respect to $Q_{d}$ is more 
complicated than the previous case, because 
$\frac{\partial u_{\epsilon, Q}}{\partial Q_{d}}$ has a more involved expression. 
If we assume that the cut-off function $\chi_{D}\left(y\right)$ is 
defined as $\bar{\chi}_{D}\left(\frac{y}{d}\right)$ for some fixed $\bar{\chi}_{D}$, 
we obtain 
\bern    \frac{\partial u_{\epsilon, Q}}{\partial Q_{d}} &=& 
          - \chi_{\mu_{0}} \chi_{D}  \frac{\partial\Xi_{d}}{\partial d}  + 
          \frac{1}{d^{2}} \chi_{\mu_{0}} \left( \Xi_{d} - U_{Q}\right)  y \cdot 
            \nabla \bar{\chi}_{D}\left(\frac{y}{d}\right)    +   
            \epsilon \chi_{\mu_{0}} w_{Q} 
        \frac{\partial \chi_{0}\left( y_{1} - d\right)}{\partial Q_{d}}   \nonumber\\
             &+&  \epsilon  \chi_{\mu_{0}} \chi_{0}\left( y_{1} - d\right) 
             \frac{\partial w_{Q}}{\partial Q_{d}}. 
\label{sec4}\eern
It is easy to see that the last two terms in the right hand side give a 
contribution to $(\ref{sec3})$ of order at most 
$\epsilon e^{d\left(1+o\left(1\right)\right)} \norm{v}_{H^{1}_{D}\left(\Omega_{\epsilon}\right)}$ 
and $\epsilon^{2} e^{d\left(1+o\left(1\right)\right)} \norm{v}_{H^{1}_{D}\left(\Omega_{\epsilon}\right)}$ respectively. 
Concerning the second one, we can use the fact that the support of 
$\nabla\chi_{D}$ is contained in the set 
$\left\lbrace\vert y\vert\geq\frac{dD}{16}\right\rbrace$, 
together with $(\ref{Xipoint})$, $(\ref{Xigradpoint})$ to see that the 
contribution of this term is at most of order 
\be \left( e^{-\left(\frac{dD}{16} + \frac{d}{2}\sqrt{\frac{D\tan\alpha\left(\tan\alpha +1\right)}{\tan^{2}\alpha +1}} + \frac{2d\tan\alpha}{\sqrt{\tan^{2}\alpha +1}} \right)\left(1+o\left(1\right)\right)} + e^{-\frac{dD}{16}\left(1+o\left(1\right)\right)} \right) \norm{v}_{H^{1}_{D}\left(\Omega_{\epsilon}\right)}.  \nonumber\ee

We can then focus on the first term in the right hand side of $(\ref{sec4})$, 
and consider the quantity 
\be    -\sum_{i,j} \int_{\R^{n}_{+}} \left( g^{ij} \partial_{i} 
           \left( \chi_{\mu_{0}}\chi_{D}\frac{\partial\Xi_{d}}{\partial d} \right) 
           \partial_{j}v + \chi_{\mu_{0}}\chi_{D}\frac{\partial\Xi_{d}}{\partial d} v 
           \right) dy     +    p \int_{\R^{n}_{+}} u^{p-1}_{\epsilon, Q} 
           \chi_{\mu_{0}}\chi_{D}\frac{\partial\Xi_{d}}{\partial d} v dy.
\label{sec5}\ee 
Now, using condition $(c)$ in Subsection $\ref{apprNeumann}$ and $(\ref{Xidpoint})$, 
if we substitute the coefficients $g^{ij}$ with the Kronecker symbols 
we find a difference of order 
$\epsilon\left( e^{-d\left(1+o\left(1\right)\right)} + 
e^{-d\left(1+\frac{2\tan\alpha}{\sqrt{\tan^{2}\alpha +1}}\right) \left(1+o\left(1\right)\right)}\right)$. 
Next, since $\Xi_{d}$ satisfies $-\Delta\Xi_{d}+\Xi_{d}=0$, 
when we differentiate with respect to $d$ we get the same equation for 
$\frac{\partial\Xi_{d}}{\partial d}$, so reasoning as for $(\ref{grad2})$,  $(\ref{grad22})$, together with $(\ref{dpoint})$, $(\ref{dgradpoint})$, 
we find 
\bern    \vert \int_{\R^{n}_{+}} \left( \nabla\left( 
       \chi_{\mu_{0}}\chi_{D}\frac{\partial\Xi_{d}}{\partial d} \right) \cdot \nabla v 
       +  \chi_{\mu_{0}}\chi_{D}\frac{\partial\Xi_{d}}{\partial d} v\right) dy \vert  
       \leq C e^{-\left(\frac{dD}{16} + \frac{d}{2}\sqrt{\frac{D\tan\alpha\left(\tan\alpha +1\right)}{\tan^{2}\alpha +1}} + \frac{2d\tan\alpha}{\sqrt{\tan^{2}\alpha +1}} \right)\left(1+o\left(1\right)\right)} \nonumber\\
      \cdot \norm{v}_{H^{1}_{D}\left(\Omega_{\epsilon}\right)}.  \nonumber\eern
It remains to estimate the last term in $(\ref{sec5})$. 
Using $(\ref{Xidpoint}$), $(\ref{dpoint})$ and the exponential decay of 
$u_{\epsilon, Q}$ and reasoning with argument similar to those for $(\ref{grad5})$, 
we find that it is of order 
\be    e^{-d\left(1+o\left(1\right)\right)}  \left( 
       e^{-d\left(\frac{p-2}{2} + \frac{\sqrt{2}\tan\alpha}{\sqrt{\tan^{2}\alpha +1}}\right)}  +   
       e^{-\frac{d\left(p-1\right)}{2}}   + o\left( \epsilon^{2}\right) \right) 
        \norm{v}_{H^{1}_{D}\left(\Omega_{\epsilon}\right)}.  \nonumber\ee 
All the above comments yield that 
\bern    \norm{\frac{\partial}{\partial Q_{d}} 
      I'_{\epsilon}\left(u_{\epsilon, Q}\right)\left[v\right]} &\leq & 
      C \left(\epsilon^{2}  + \epsilon e^{-d\left(1+o\left(1\right)\right)}  
        + e^{-d\left[\frac{1}{2}\sqrt{\frac{D\tan\alpha\left(\tan\alpha +1\right)}{\tan^{2}\alpha +1}} + \frac{2\tan\alpha}{\sqrt{\tan^{2}\alpha +1}}\right]    \left(1+o\left(1\right)\right)}\right) \norm{v}_{H^{1}_{D}\left(\Omega_{\epsilon}\right)}  \nonumber\\
             &+& C\left(e^{-\frac{d\left(p+1\right)}{2}\left(1+o\left(1\right)\right)} 
             + e^{-d\left(\frac{p}{2} + \frac{\sqrt{2}\tan\alpha}{\sqrt{\tan^{2}\alpha +1}}\right)\left(1+o\left(1\right)\right)}\right) 
 \norm{v}_{H^{1}_{D}\left(\Omega_{\epsilon}\right)}.  \label{sec6}\eern
From $(\ref{sec2})$ and $(\ref{sec6})$ we finally obtain the desired conclusion. 
\end{proof}

\subsubsection{Case $\frac{\pi}{2}\leq\alpha\leq\pi$} \label{case 2}
In this subsection we introduce the manifold of approximate solutions in 
the case $\frac{\pi}{2}\leq \alpha\leq\pi$. 
Since the construction is substantially the same as in the previous 
subsection, we will be rather sketchy. 

\medskip

Let us consider the solution of $(\ref{logphi})$, $\Phi^{d}$, 
and the function $\Xi_{d}$ defined in $(\ref{Xi})$. 
Reasoning as at the beginning of the Subsection $\ref{case 1}$, we derive norm 
estimate for $\Xi_{d}$: 
\be     \norm{\Xi_{d}}_{H^{1}\left(d\left(\Sigma_{D}-Q_{0}\right)\right)}\leq e^{-d\left(1+o\left(1\right)\right)}.  \nonumber\ee
Moreover, from Proposition $\ref{prop:asymptotic2}$ we also obtain 
pointwise estimates for $\Xi_{d}$ and its gradient.

Now, using the cut-off functions $(\ref{chiD})$, $(\ref{chi0})$, 
we define, in the new coordinates $y$ introduced in Subsection $\ref{apprNeumann}$, 
the functions
\be      u_{\epsilon, Q}\left(y\right)  :=  \chi_{\mu_{0}} \left(\epsilon y\right)\left[ \left(U_{Q}\left(y\right) - \Xi_{d}\left(y\right)\right) \chi_{D}\left(y\right) + \epsilon w_{Q}\left(y\right) \chi_{0}\left(y_{1}-d\right) \right].     \nonumber \ee 
Following the line of the Subsection $\ref{case 1}$ 
we prove that the $u_{\epsilon, Q}$'s are good approximate solutions to $(\ref{problem1})$ for suitable conditions of $Q$. 
Since the computations in the following proposition are 
the same as in Proposition $3.12$ and Proposition $3.13$ in \cite{GMMP1} 
we will omit the proof. 
\begin{proposition} 
Let $\mu_{0}$ be the constant appearing in Subsection $\ref{apprNeumann}$. 
Then there exists another constant $C_{\Omega}$, independent of $\epsilon$, 
such that, for $C_{\Omega}\leq d\leq\frac{1}{\epsilon C_{\Omega}}$ and for $Dd<\frac{\mu_{0}}{\epsilon C_{\Omega}}$, 
the functions $u_{\epsilon, Q}$ satisfy 
\bern     \norm{I'_{\epsilon}\left(u_{\epsilon, Q}\right)}  \leq  C\left(\epsilon^{2}  + \epsilon e^{-d\left(1+o\left(1\right)\right)}  +
        e^{-\frac{d\left(p+1\right)}{2}\left(1+o\left(1\right)\right)} 
             + e^{-\frac{3}{2}d\left(1+o\left(1\right)\right)}\right), 
\label{estderiv3}\eern
and 
\bern     \norm{I''_{\epsilon}\left(u_{\epsilon, Q}\right)\left[q\right]}  \leq 
 C \left(\epsilon^{2}  + \epsilon\exp^{-d\left(1+o\left(1\right)\right)}  +
        e^{-\frac{d\left(p+1\right)}{2}\left(1+o\left(1\right)\right)} 
             + e^{-\frac{3}{2}d \left(1+o\left(1\right)\right)}\right) \norm{q}, 
\label{estderiv4}\eern
for some fixed $C>0$ and for $\epsilon$ sufficiently small. 
In $(\ref{estderiv4})$ $q$ represents a vector in $H^{1}_{D}\left(\Omega_{\epsilon}\right)$ 
which is tangent to the manifold of the $u_{\epsilon, Q}$'s (when $Q$ varies).
\label{pr:est3}\end{proposition}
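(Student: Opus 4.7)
The proof will proceed exactly along the lines of Propositions \ref{pr:est} and \ref{pr:est2}, substituting Proposition \ref{prop:asymptotic2} for Proposition \ref{prop:asymptotic} in the analysis of the auxiliary function $\Xi_d$ defined by $(\ref{Xi})$. First I would split $u_{\epsilon,Q}=\bar u_{\epsilon,Q}+\check u_{\epsilon,Q}$ as in $(\ref{funct2})$, so that the contribution of $\bar u_{\epsilon,Q}$ to $I'_\epsilon$ is controlled by Proposition \ref{prop:estNeumann} and produces the $\epsilon^2$ term. The remaining contributions come from the correction $\check u_{\epsilon,Q}$ and from the nonlinear discrepancy $\bar u_{\epsilon,Q}^{p}-u_{\epsilon,Q}^{p}$.

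Before handling these, I would record the analogs of $(\ref{Xinorm})$, $(\ref{Xipoint})$ and $(\ref{Xigradpoint})$ in the present angular range. The bound $\norm{\Xi_d}_{H^1(d(\Sigma_D-Q_0))}\le e^{-d(1+o(1))}$ follows from the same extension-cutoff argument as in Subsection \ref{case 1}, since the boundary datum is again $U$. For the pointwise bounds, Proposition \ref{prop:asymptotic2} gives $\Xi_d(y)=\exp[-d\,\bar\Phi(y/d+Q_0)+o(d)]$ on compact subsets of $d(\overline{\Sigma}_D-Q_0)$, and since in this regime $\mathrm{dist}(Q_0,\partial\Sigma_D)=1$ one has $\bar\Phi\ge 1$ in $(\ref{caso2})$ and hence decay at least $e^{-d(1+o(1))}$. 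This is precisely the reason the $\tan\alpha$-dependent factors that appeared in $(\ref{estderiv})$, $(\ref{estderiv2})$ disappear. The gradient bounds follow from the arguments of \cite{LN} invoked at the end of Subsection \ref{case 1}, and the $d$-derivative estimates analogous to $(\ref{Xidpoint})$--$(\ref{dgradpoint})$ are obtained by differentiating $(\ref{varphi})$ in $d$ and comparing $-\partial_d\varphi$ with $\varphi$ via the maximum principle, exactly as in Subsection \ref{case 1}.

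With these ingredients in hand, the first-order bound $(\ref{estderiv3})$ is established exactly as in Proposition \ref{pr:est}. I would decompose $\check u_{\epsilon,Q}=\check u_{\epsilon,Q,1}+\check u_{\epsilon,Q,2}+\check u_{\epsilon,Q,3}$ according to the three bracketed terms in $(\ref{apprfunc})$: the first and third pieces are controlled by the exponential decay of $U$ at infinity and by the pointwise bound $(\ref{est w})$ on $w_Q$, producing the $\epsilon e^{-d(1+o(1))}$ contributions. The second piece is treated by integration by parts against $-\Delta\Xi_d+\Xi_d=0$, the metric perturbation $|g^{ij}-\delta^{ij}|\le C\epsilon|y|$, and the pointwise bounds on $\Xi_d$ and $\nabla\Xi_d$ just recalled. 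For the nonlinear correction I split $\Omega_\epsilon$ as in the proof of Proposition \ref{pr:est}, apply $(\ref{grad4})$, and exploit $|\Xi_d|\le e^{-d(1+o(1))}$ to obtain the $e^{-d(p+1)/2}$ term from the inner region, where $|\bar u|^{p-1}|\check u|$ dominates, and the $e^{-3d/2}$ term from the outer region, where $|\check u|^p$ and the cross term become comparable. The second-order bound $(\ref{estderiv4})$ is then obtained by the same procedure as in Proposition \ref{pr:est2}: differentiating $I'_\epsilon(u_{\epsilon,Q})[v]$ in $Q$ along the directions tangential and normal to $\Gamma_\epsilon$, where the tangential part yields only an $\epsilon^2\norm{v}$ contribution through $g^{ij}$ and $w_Q$ as in $(\ref{sec2})$, and the normal part is controlled through the $d$-derivative estimates for $\Xi_d$.

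The main technical obstacle I anticipate is the bookkeeping of exponential rates in the nonlinear term: because the limit function $\bar\Phi$ in $(\ref{caso2})$ is piecewise, one must identify which subregion of $\Omega_\epsilon$ produces the dominant contribution and verify that the resulting exponent is exactly $3d/2$ rather than some slower rate. Conceptually, however, no new ingredients beyond Propositions \ref{prop:asymptotic2}, \ref{pr:est} and \ref{pr:est2} are required: the computations run parallel to Propositions 3.12 and 3.13 of \cite{GMMP1}, the only real simplification being that $\mathrm{dist}(Q_0,\partial\Sigma_D)=1$ in this range, so that all the $\alpha$-dependent prefactors collapse into the clean bound stated in $(\ref{estderiv3})$--$(\ref{estderiv4})$.
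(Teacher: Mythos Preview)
Your proposal is correct and matches the paper's approach: the paper itself omits the proof of this proposition, stating only that the computations are the same as in Propositions~3.12 and~3.13 of \cite{GMMP1}, which is precisely the route you describe (follow Propositions~\ref{pr:est} and~\ref{pr:est2} verbatim, replacing Proposition~\ref{prop:asymptotic} by Proposition~\ref{prop:asymptotic2} and using that $\operatorname{dist}(Q_0,\partial\Sigma_D)=1$ so the $\tan\alpha$-dependent exponents collapse). One small bookkeeping remark: in your attribution of the nonlinear terms, the roles of the inner and outer regions are interchanged --- it is on $B_1=\{|y|<d/2\}$ that $|\check u_{\epsilon,Q}|=|\Xi_d|\le e^{-3d/2+o(d)}$ (since $\bar\Phi\ge 3/2$ within distance $1/2$ of $Q_0$), producing the $e^{-3d/2}$ contribution, while on $B_2$ the cross term $|\bar u|^{p-1}|\check u|$ yields $e^{-(p-1)d/2}e^{-d}=e^{-(p+1)d/2}$; but you already flagged this bookkeeping as the main thing to watch, and the final bound is unaffected.
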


\subsubsection{Case $\pi <\alpha <2\pi$}
In this subsection we introduce the manifold of approximate solutions in 
the case $\pi <\alpha <2\pi$. 
Also in this case we will be very quick, since the construction is the same as in the previous 
subsections. 

\medskip

Let us consider the solution of $(\ref{logphi})$, $\Phi^{d}$, 
and the function $\Xi_{d}$ defined in $(\ref{Xi})$. 
Reasoning as at the beginning of the Subsection $\ref{case 1}$, we derive norm 
estimate for $\Xi_{d}$: 
\be     \norm{\Xi_{d}}_{H^{1}\left(d\left(\Sigma_{D}-Q_{0}\right)\right)}\leq e^{-d\left(1+o\left(1\right)\right)}.  \nonumber\ee
Moreover, from Proposition $\ref{prop:asymptotic3}$ we also obtain 
pointwise estimates for $\Xi_{d}$ and its gradient.

Now, using the cut-off functions $(\ref{chiD})$, $(\ref{chi0})$, 
we define, in the new coordinates $y$ introduced in Subsection $\ref{apprNeumann}$, 
the functions
\be      u_{\epsilon, Q}\left(y\right)  :=  \chi_{\mu_{0}}\left(\epsilon y\right)\left[ \left(U_{Q}\left(y\right) - \Xi_{d}\left(y\right)\right) \chi_{D}\left(y\right) +\epsilon w_{Q}\left(y\right) \chi_{0}\left(y_{1}-d\right) \right].     \nonumber\ee 
Following the line of the Subsection $\ref{case 1}$ 
we obtain that the $u_{\epsilon, Q}$'s are 
good approximate solutions to $(\ref{problem1})$ for suitable conditions of $Q$. 
Since the computations in the following proposition are very similar to those 
in Proposition $3.12$ and Proposition $3.13$ in \cite{GMMP1} 
we will omit the proof. 
\begin{proposition} 
Let $\mu_{0}$ be the constant appearing in Subsection $\ref{apprNeumann}$. 
Then there exists another constant $C_{\Omega}$, independent of $\epsilon$, 
such that, for $C_{\Omega}\leq d\leq\frac{1}{\epsilon C_{\Omega}}$ and for $Dd<\frac{\mu_{0}}{\epsilon C_{\Omega}}$, 
the functions $u_{\epsilon, Q}$ satisfy 
\bern     \norm{I'_{\epsilon}\left(u_{\epsilon, Q}\right)}  \leq  C\left(\epsilon^{2}  + \epsilon e^{-d\left(1+o\left(1\right)\right)}  +
        e^{-\frac{d\left(p+1\right)}{2}\left(1+o\left(1\right)\right)} 
             + e^{-\frac{d}{2}\left(1+o\left(1\right)\right)}\right), 
\label{estderiv5}\eern
and 
\bern     \norm{I''_{\epsilon}\left(u_{\epsilon, Q}\right)\left[q\right]}  \leq 
 C \left(\epsilon^{2}  + \epsilon\exp^{-d\left(1+o\left(1\right)\right)}  +
        e^{-\frac{d\left(p+1\right)}{2}\left(1+o\left(1\right)\right)} 
             + e^{-\frac{d}{2} \left(1+o\left(1\right)\right)}\right) \norm{q}, 
\label{estderiv6}\eern
for some fixed $C>0$ and for $\epsilon$ sufficiently small. 
In $(\ref{estderiv6})$ $q$ represents a vector in $H^{1}_{D}\left(\Omega_{\epsilon}\right)$ 
which is tangent to the manifold of the $u_{\epsilon, Q}$'s (when $Q$ varies).
\label{pr:est4}\end{proposition}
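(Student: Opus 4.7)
The plan is to follow the proofs of Propositions \ref{pr:est} and \ref{pr:est2}, substituting the asymptotics of Proposition \ref{prop:asymptotic3} for those of Proposition \ref{prop:asymptotic} at each relevant step. The key geometric simplification in the regime $\pi<\alpha<2\pi$ is that the limit profile $\overline{\phi}(x)$ equals simply $|x-Q_0|$, so $\Xi_d$ has only one ``reflection-free'' exponential decay rate, and the refined $\tan\alpha$-dependent factors appearing in Subsection \ref{case 1} disappear. This is precisely why the right-hand sides of $(\ref{estderiv5})$--$(\ref{estderiv6})$ differ from $(\ref{estderiv})$--$(\ref{estderiv2})$ only through a single term $e^{-\frac{d}{2}(1+o(1))}$ in place of the angle-dependent exponential.

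For $(\ref{estderiv5})$ I would write $u_{\epsilon,Q}=\bar u_{\epsilon,Q}+\check u_{\epsilon,Q}$ and decompose $\check u_{\epsilon,Q}$ into the three cut-off pieces $\check u_{\epsilon,Q,i}$ used in the proof of Proposition \ref{pr:est}. The Neumann piece $\bar u_{\epsilon,Q}$ contributes $O(\epsilon^2)$ by $(\ref{estNeumann})$, while the pieces involving $(\chi_D-1)U_Q$ and $(\chi_0-1)w_Q$ are controlled by $(\ref{limU})$ and $(\ref{est w})$ respectively. For the $\Xi_d$ piece, I would integrate by parts, using $-\Delta\Xi_d+\Xi_d=0$, and reduce to a boundary contribution supported where $\chi_{\mu_0}\chi_D-1\neq 0$; Proposition \ref{prop:asymptotic3} then yields pointwise exponential decay at the single rate $e^{-d(1+o(1))}$. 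The nonlinear difference $\bar u^p_{\epsilon,Q}-u^p_{\epsilon,Q}$ is handled by the inequality $(\ref{grad4})$ and by splitting $\Omega_\epsilon$ into $B_1=\{|y|<\min(d/2,\epsilon^{-c_{K,n}})\}$ and its complement; since $|\check u_{\epsilon,Q}|$ on $B_1$ is now merely $O(e^{-d/2+o(d)})$, one obtains exactly the claimed $e^{-\frac{d(p+1)}{2}(1+o(1))}$ and $e^{-\frac{d}{2}(1+o(1))}$ terms.

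For $(\ref{estderiv6})$ I would differentiate $I'_\epsilon(u_{\epsilon,Q})[v]$ in $Q$ as in Proposition \ref{pr:est2}, splitting the variation into a tangential component $q_T$ along the level set $\{d=\text{const}\}$ and a radial component $q_d$ along $\nabla d$. The tangential direction gives $O(\epsilon^2)\|v\|$ from the metric variation (Remark \ref{rem:gij}(ii)) and from $\partial w_Q/\partial Q_T$. For the $d$-derivative I would differentiate $(\ref{varphi})$ to obtain the analogue of $(\ref{varphi1})$ and then derive the analogues of the $H^1$ and pointwise bounds $(\ref{Xidpoint})$, $(\ref{dpoint})$, $(\ref{dgradpoint})$ for $\partial\Xi_d/\partial d$, again with only the single decay rate $e^{-d(1+o(1))}$. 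The resulting nonlinear term in $(\ref{sec3})$ is treated exactly as $A_2$ in the previous step, producing the same $e^{-\frac{d(p+1)}{2}(1+o(1))}$ and $e^{-\frac{d}{2}(1+o(1))}$ contributions.

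The only non-routine ingredient, and what I expect to be the main (though still modest) obstacle, is verifying the analogue of $(\ref{Upsilonlimit})$ for $\partial\varphi/\partial d$ in the present geometry: setting $\Upsilon^d=-\frac{1}{d}\log(-\partial\varphi/\partial d)$, one checks via the asymptotics of $\nabla U$ at infinity that the boundary datum in the analogue of $(\ref{Upsilon})$ converges to $|x-Q_0|$, and then invokes the barrier constructions of Lemma $3.1$ in \cite{GMMP1}, which apply verbatim since $\Sigma_D$ has the same type of boundary regularity as in the cases already treated. Once this limit is established, all remaining estimates reduce to straightforward transcriptions of the arguments in Subsection \ref{case 1}.
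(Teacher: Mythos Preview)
Your proposal is correct and follows essentially the same route as the paper, which omits the proof and simply refers to Propositions~3.12 and~3.13 in \cite{GMMP1}. Adapting instead the in-paper proofs of Propositions~\ref{pr:est} and~\ref{pr:est2} with the single-rate asymptotics of Proposition~\ref{prop:asymptotic3} is an equivalent strategy, since those arguments share the same decomposition and the same template of estimates; the only change is precisely the one you identify, namely that $\overline{\phi}(x)=|x-Q_{0}|$ removes the $\tan\alpha$-dependent exponents and leaves the term $e^{-\frac{d}{2}(1+o(1))}$.
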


\section{Proof of Theorem $\ref{th:solution}$}   \label{proof}

To prove our main Theorem we need to derive an expansion 
in terms of $Q$ and $\epsilon$ of the energy of approximate solutions. 
Then we can apply the abstract theory in Subsection $\ref{perturbation}$ to 
obtain the existence result. 

In the case $\frac{\pi}{2}\leq\alpha\leq\pi$ 
the energy expansions for the approximate solutions $u_{\epsilon, Q}$ are 
the same as in the case $\alpha =\pi$, 
see Proposition $4.1$ and Proposition $4.2$ in \cite{GMMP1}. 
Then also the definition of the critical manifold 
and the study of the reduced functional are the same. 
Therefore for the proof of Theorem $\ref{th:solution}$ in 
the case $\frac{\pi}{2}\leq\alpha\leq\pi$ we refer the reader 
to Section $4$ in \cite{GMMP1}. 

In the case $\pi <\alpha <2\pi$, even if the approximate solutions are different from the previous case, 
the energy expansions turn out to be the same. 
Then also in this case we omit the proof of Theorem $\ref{th:solution}$ 
and refer the reader to Section $4$ in \cite{GMMP1}.

In the case $0<\alpha <\frac{\pi}{2}$ 
the energy expansions are quite different, so we will give the proof in the details. 

From now on we will assume $0<\alpha <\frac{\pi}{2}$.

\subsection{Energy expansions for the approximate solutions $u_{\epsilon, Q}$} \label{sec:energy}

Here we expand $I_{\epsilon}\left(u_{\epsilon, Q}\right)$ in terms of $Q$ and 
$\epsilon$, where $u_{\epsilon, Q}$ is the function defined in $(\ref{apprfunc})$. 

\begin{proposition} 
For $\epsilon\rightarrow 0$ and $d=d\left(Q\right)\rightarrow+\infty$, the following expansion holds
\be  I_{\epsilon}\left(u_{\epsilon, Q}\right) = \tilde{C}_{0}   
         - \tilde{C}_{1}\epsilon H\left(\epsilon Q\right) + 
              e^{-2d\left(1+o\left(1\right)\right)}  +   
              e^{\left( -d- \frac{d\sqrt{2} \tan\alpha}{\sqrt{\tan^{2}\alpha +1}} \right) \left(1+o\left(1\right)\right)}  + 
               o\left(\epsilon^{2}\right),  \label{espansione}\ee
where $\tilde{C}_{0}$ and $\tilde{C}_{1}$ are the constants in Proposition 
$\ref{prop:estNeumann}$.
\label{pr:espansione}\end{proposition}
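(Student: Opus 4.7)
The plan is to decompose $u_{\epsilon,Q} = \bar{u}_{\epsilon,Q} + \check{u}_{\epsilon,Q}$ exactly as in the proof of Proposition \ref{pr:est}, so that $\bar{u}_{\epsilon,Q}$ is the Neumann approximate solution of Subsection \ref{apprNeumann} and $\check{u}_{\epsilon,Q}$ collects the Dirichlet correction (essentially $-\chi_{\mu_0}\chi_D\,\Xi_d$) together with the tails produced by the cut-offs. I would then Taylor expand
\[
I_\epsilon(u_{\epsilon,Q}) = I_\epsilon(\bar{u}_{\epsilon,Q}) + I'_\epsilon(\bar{u}_{\epsilon,Q})[\check{u}_{\epsilon,Q}] + \tfrac{1}{2}I''_\epsilon(\bar{u}_{\epsilon,Q})[\check{u}_{\epsilon,Q},\check{u}_{\epsilon,Q}] + \mathcal{R},
\]
and control the cubic remainder $\mathcal{R}$ by $C\|\check{u}_{\epsilon,Q}\|_{L^{p+1}}^{p+1}$, which the pointwise estimate \eqref{Xipoint} for $\Xi_d$ forces to be of order $e^{-(p+1)d(1+o(1))}$, hence absorbed in $o(\epsilon^2)$ under the hypothesis $d\lesssim 1/(\epsilon C_\Omega)$.

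The first term $I_\epsilon(\bar{u}_{\epsilon,Q})$ agrees, up to exponentially small boundary contributions coming from the decay of $\bar{u}_{\epsilon,Q}$ at $\partial_D\Omega_\epsilon$, with its Neumann counterpart $I_{\epsilon,N}(\bar{u}_{\epsilon,Q})$, so Proposition \ref{prop:estNeumann} yields the leading $\tilde{C}_0 - \tilde{C}_1\epsilon H(\epsilon Q) + o(\epsilon^2)$. For the linear cross term I would integrate by parts, writing
\[
I'_\epsilon(\bar{u}_{\epsilon,Q})[\check{u}_{\epsilon,Q}] = \int_{\Omega_\epsilon}(-\Delta_g\bar{u}_{\epsilon,Q}+\bar{u}_{\epsilon,Q}-\bar{u}_{\epsilon,Q}^p)\,\check{u}_{\epsilon,Q}\,dy + \int_{\partial\Omega_\epsilon}\tfrac{\partial\bar{u}_{\epsilon,Q}}{\partial\nu_g}\check{u}_{\epsilon,Q}\,d\sigma,
\]
and use Proposition \ref{prop:estNeumann} to dominate the bulk integrand by $C\epsilon^2(1+|y|^K)e^{-|y|}$ and the Neumann-boundary integrand by the same factor, both of which combine with the exponential smallness of $\check{u}_{\epsilon,Q}$ to give $o(\epsilon^2)$. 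The Dirichlet part of the boundary integral is where the new exponential structure appears: using the defining equation for $\Xi_d$, the identity $\Xi_d=U$ on $\partial_D$, and the sharp asymptotics \eqref{limU}--\eqref{limU2}, this reduces (to leading order) to a surface integral of $U_Q\,\partial U_Q/\partial\nu$ on $\partial_D\Omega_\epsilon$, which by Laplace's method evaluates to $e^{-2d(1+o(1))}$, the $2d$ being the total round-trip exponent of $U_Q$ seen at the Dirichlet wall.

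For the quadratic term I would use that $\Xi_d$ is harmonic for $-\Delta+1$ so that $\tfrac12 I''_\epsilon(\bar u_{\epsilon,Q})[\check u_{\epsilon,Q},\check u_{\epsilon,Q}]$ reduces, modulo the $p$-nonlinearity, to $\tfrac12\int\Xi_d^2 + \tfrac12\int|\nabla\Xi_d|^2 - \tfrac{p}{2}\int\bar u_{\epsilon,Q}^{p-1}\Xi_d^2$. The first two together are again $O(e^{-2d(1+o(1))})$, matching the first Dirichlet term. The subtler contribution comes from the region $\{y_n\simeq 0\}$: by Proposition \ref{prop:asymptotic}, $\Xi_d$ there is approximately $e^{-d\min\{d_1,d_2\}}$ with \emph{both} images $Q_1,Q_2$ active, so the product $U_Q^{p-1}\Xi_d^2$ along this slice accumulates an extra factor $\sqrt{2}$ in the exponent at the midpoint (the two image distances sum orthogonally at $\{y_n=0\}$), yielding after Laplace asymptotics precisely $e^{-d(1 + \sqrt{2}\tan\alpha/\sqrt{\tan^2\alpha+1})(1+o(1))}$.

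The main obstacle will be this last point: the bookkeeping of which geometric contribution carries which exponential rate, and showing that the constant $\sqrt{2}\tan\alpha/\sqrt{\tan^2\alpha+1}$ really is the correct decay rate of the cross-interaction between $U_Q$ centred at the origin and the two symmetric images $Q_1,Q_2$ identified in the remark after Proposition \ref{prop:asymptotic}. Alongside this, one must verify that the metric perturbation $g^{ij}-\delta^{ij}=O(\epsilon|y|)$ on the support $|y|\lesssim Dd$, together with the cut-off errors (which by \eqref{chiD}--\eqref{chi0} are localized in thin exponentially-small regions), only produce corrections absorbed in $o(\epsilon^2)$. The decay estimates on $\partial\Xi_d/\partial d$ proved in \eqref{Xidpoint}, \eqref{dpoint}--\eqref{dgradpoint} should be reused to handle the $d$-derivatives that appear when one later differentiates \eqref{espansione} in $Q$ (needed for the abstract reduction scheme to close).
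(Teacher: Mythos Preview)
Your Taylor-expansion framework is equivalent to the paper's direct expansion, but the identification of which term generates which exponential is wrong, and this is a genuine gap, not just bookkeeping.

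First, the linear term $I'_\epsilon(\bar u_{\epsilon,Q})[\check u_{\epsilon,Q}]$ does \emph{not} produce $e^{-2d}$. The paper bounds it directly by $C\epsilon^{2}\|\check u_{\epsilon,Q}\|_{H^1}=o(\epsilon^{2})$ using \eqref{estNeumann} and \eqref{Xinorm}; your proposed integration by parts does not help, because in the straightened $y$-coordinates the only boundary of the half-space is the Neumann face $\{y_n=0\}$, where $\partial_\nu\bar u_{\epsilon,Q}=O(\epsilon^{2})$ by Proposition~\ref{prop:estNeumann}. The Dirichlet condition is not a boundary of the coordinate patch---it is encoded in the construction of $\Xi_d$.

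Second, \emph{both} exponentials in \eqref{espansione} come from a single quantity, namely $-\tfrac12\int\bar u_{\epsilon,Q}^{\,p}\check u_{\epsilon,Q}\approx\tfrac12\int U^{p}\Xi_d$. The paper reaches this by the projection identity: since $-\Delta\Xi_d+\Xi_d=0$ and $\chi_{\mu_0}\chi_D\,u_{\epsilon,Q}$ vanishes on the model Dirichlet boundary, one has (up to cut-off errors)
\[
\int\bigl(|\nabla\Xi_d|^{2}+\Xi_d^{2}\bigr)=\int\bigl(\nabla U\cdot\nabla\Xi_d+U\,\Xi_d\bigr),
\]
which collapses your quadratic term and the bilinear cross term into $\tfrac12\int(\nabla\bar u\cdot\nabla\check u+\bar u\,\check u)=\tfrac12\int\bar u^{p}\check u+o(\epsilon^{2})$. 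After combining with the first-order piece of $\tfrac{1}{p+1}\int(|\bar u|^{p+1}-|u|^{p+1})$ the net is $-\tfrac12\int\bar u^{p}\check u$. This identity is the step you are missing; without it, saying that $\tfrac12\int(|\nabla\Xi_d|^{2}+\Xi_d^{2})$ is ``$O(e^{-2d})$'' is only a crude upper bound, not the sharp evaluation the proposition requires.

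Third, the term $-\tfrac{p}{2}\int\bar u^{\,p-1}\Xi_d^{2}$ is quadratic in $\Xi_d$ and is genuinely higher order: it is absorbed into the $o(1)$ factors in the exponents, and is \emph{not} the source of $e^{-d(1+\sqrt{2}\tan\alpha/\sqrt{\tan^2\alpha+1})}$. Your heuristic about the two image distances ``summing orthogonally'' to give $\sqrt2$ does not survive computation; that rate appears instead when one evaluates $\int U^{p}\Xi_d$ directly via the pointwise asymptotics \eqref{Xipoint}.
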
  
\begin{proof} 
As in the proof of Proposition $\ref{pr:est}$, let us write 
$u_{\epsilon, Q}\left(y\right)=\bar{u}_{\epsilon, Q}\left(y\right)+\check{u}_{\epsilon, Q}\left(y\right)$, see $(\ref{function})$ and $(\ref{funct2})$. 
Then, using the coordinates $y$ introduced in Subsection $\ref{apprNeumann}$, we find that 
\bern 
    I_{\epsilon}\left(u_{\epsilon, Q}\right) &=& 
        I_{\epsilon}\left(\bar{u}_{\epsilon, Q}\right)   + 
        \int_{\Omega_{\epsilon}} \left( \nabla_{g}\bar{u}_{\epsilon, Q} 
        \nabla_{g}\check{u}_{\epsilon, Q} + \bar{u}_{\epsilon, Q}\check{u}_{\epsilon, Q} 
        \right) dy    +    \frac{1}{2}  \int_{\Omega_{\epsilon}} \left( 
        \vert\nabla_{g}\check{u}_{\epsilon, Q}\vert^{2}  
        +  \check{u}_{\epsilon, Q}^{2}\right) dy    \nonumber\\
        &+& \frac{1}{p+1} \int_{\Omega_{\epsilon}} \left( 
        \vert \bar{u}_{\epsilon, Q} \vert^{p+1} - 
        \vert u_{\epsilon, Q} \vert^{p+1} \right)  dy 
\label{esp1}\eern 
Using condition $(c)$ in Subsection $\ref{apprNeumann}$ we have that 
\bern 
     \vert \int_{\Omega_{\epsilon}} \left( \nabla_{g}\bar{u}_{\epsilon, Q} 
        \nabla_{g}\check{u}_{\epsilon, Q} + \bar{u}_{\epsilon, Q}\check{u}_{\epsilon, Q} 
        \right) dy   -  \int_{\R^{n}_{+}} \left( \nabla\bar{u}_{\epsilon, Q} 
        \nabla\check{u}_{\epsilon, Q} + \bar{u}_{\epsilon, Q}\check{u}_{\epsilon, Q} 
        \right) dy   \vert \nonumber\\
      \leq   C \epsilon  \int_{\R^{n}_{+}} \vert y\vert   
         \vert \nabla\bar{u}_{\epsilon, Q} \vert   
         \vert  \nabla\check{u}_{\epsilon, Q} \vert dy; 
\label{esp2}\eern
\be
     \vert \int_{\Omega_{\epsilon}} \left( 
     \vert\nabla_{g}\check{u}_{\epsilon, Q}\vert^{2} + \check{u}_{\epsilon, Q}^{2}
        \right) dy   -  \int_{\R^{n}_{+}} \left( 
        \vert \nabla\check{u}_{\epsilon, Q} \vert^{2} + \check{u}_{\epsilon, Q}^{2}  
        \right) dy   \vert 
      \leq   C \epsilon  \int_{\R^{n}_{+}} \vert y\vert   
            \vert  \nabla\check{u}_{\epsilon, Q} \vert^{2} dy. 
\label{esp3}\ee 
Concerning $(\ref{esp2})$, we can divide the domain of integration into 
$B_{\frac{d}{2}}\left(0\right)$ and its complement and use 
$(\ref{limU})$, $(\ref{est w})$, $(\ref{Xinorm})$, $(\ref{Xipoint})$, 
$(\ref{Xigradpoint})$ to find 
\be   C \epsilon  \int_{\R^{n}_{+}} \vert y\vert \vert \nabla\bar{u}_{\epsilon, Q} \vert   
         \vert  \nabla\check{u}_{\epsilon, Q} \vert dy \leq  
         C \epsilon \left( e^{-\frac{3}{2}d\left(1 + o\left(1\right)\right)}  +  
         e^{-d\left(1 + \frac{\sqrt{2}\tan\alpha}{\sqrt{\tan^{2}\alpha +1}} 
         \right)    \left(1 + o\left(1\right)\right)}\right). 
\nonumber\ee
For $(\ref{esp3})$, the same estimates yield 
\be   C \epsilon  \int_{\R^{n}_{+}} \vert y\vert   
            \vert  \nabla\check{u}_{\epsilon, Q} \vert^{2} dy  \leq C \epsilon 
             \left( e^{-2d\left(1 + o\left(1\right)\right)}  +  
         e^{-d\left(1 + \frac{\sqrt{2}\tan\alpha}{\sqrt{\tan^{2}\alpha +1}} 
         \right)    \left(1 + o\left(1\right)\right)}\right). 
\nonumber\ee 
The last two formulas, $(\ref{esp1})$, $(\ref{esp2})$, $(\ref{esp3})$ imply 
\bern 
    I_{\epsilon}\left(u_{\epsilon, Q}\right) = 
        I_{\epsilon}\left(\bar{u}_{\epsilon, Q}\right)   + 
        \int_{\R^{n}_{+}} \left( \nabla\bar{u}_{\epsilon, Q} 
        \nabla\check{u}_{\epsilon, Q} + \bar{u}_{\epsilon, Q}\check{u}_{\epsilon, Q} 
        \right) dy    +    \frac{1}{2}  \int_{\R^{n}_{+}} \left( 
        \vert\nabla\check{u}_{\epsilon, Q}\vert^{2}  
        +  \check{u}_{\epsilon, Q}^{2}\right) dy    \nonumber\\
        + \frac{1}{p+1} \int_{\Omega_{\epsilon}} \left( 
        \vert \bar{u}_{\epsilon, Q} \vert^{p+1} - 
        \vert u_{\epsilon, Q} \vert^{p+1} \right)  dy  + 
        o\left( \epsilon \left( e^{-\frac{3}{2}d\left(1+o\left(1\right)\right)} 
        + e^{-d-\frac{\sqrt{2}d\tan\alpha}{\sqrt{\tan^{2}\alpha +1}}} 
        \right) \right).
\label{esp4}\eern 
Using the same notation as in the proof of Proposition $\ref{pr:est}$, 
we write $\check{u}_{\epsilon, Q}=\check{u}_{\epsilon, Q, 1}+\check{u}_{\epsilon, Q, 2}+\check{u}_{\epsilon, Q, 3}$. 
Formulas $(\ref{limU})$ and $(\ref{est w})$ imply 
\be    \vert \int_{\R^{n}_{+}} \left( \nabla\bar{u}_{\epsilon, Q} 
        \nabla\check{u}_{\epsilon, Q, 1} + 
        \bar{u}_{\epsilon, Q}\check{u}_{\epsilon, Q, 1} \right) dy  \vert \leq 
        C e^{-\frac{dD}{16}\left(1 + o\left(1\right)\right)}; 
\nonumber\ee
\be    \vert \int_{\R^{n}_{+}} \left( \nabla\bar{u}_{\epsilon, Q} 
        \nabla\check{u}_{\epsilon, Q, 3} + 
        \bar{u}_{\epsilon, Q}\check{u}_{\epsilon, Q, 3} \right) dy  \vert \leq 
        C \epsilon e^{-2d\left(1 + o\left(1\right)\right)}, 
\nonumber\ee  
from which we deduce that 
\bern  
     \int_{\R^{n}_{+}} \left( \nabla\bar{u}_{\epsilon, Q} 
        \nabla\check{u}_{\epsilon, Q} + \bar{u}_{\epsilon, Q}\check{u}_{\epsilon, Q} 
        \right) dy  &=&   
         \int_{\R^{n}_{+}} \left( \nabla\bar{u}_{\epsilon, Q} 
        \nabla\check{u}_{\epsilon, Q, 2} + 
        \bar{u}_{\epsilon, Q}\check{u}_{\epsilon, Q, 2} \right) dy  \nonumber\\
        &+&   o\left( e^{-\frac{dD}{16}\left(1 + o\left(1\right)\right)} + 
          \epsilon e^{-2d\left(1 + o\left(1\right)\right)}\right). 
\nonumber\eern
Similar estimates also yield 
\be   
     \int_{\R^{n}_{+}} \left(\vert\nabla\check{u}_{\epsilon, Q}\vert^{2}  
        +  \check{u}_{\epsilon, Q}^{2}\right) dy  = 
        \int_{\R^{n}_{+}} \left(\vert\nabla\check{u}_{\epsilon, Q, 2}\vert^{2}  
        +  \check{u}_{\epsilon, Q, 2}^{2}\right) dy   + 
       o\left( e^{-\frac{dD}{16}-d\left(1 + o\left(1\right)\right)} + 
          \epsilon e^{-2d\left(1 + o\left(1\right)\right)}\right). 
\nonumber\ee
From a straightforward computation one finds that for any function $v$ 
\be 
    \nabla\check{u}_{\epsilon, Q, 2}\nabla v  + \check{u}_{\epsilon, Q}^{2} v = 
    \nabla\Xi_{d}\cdot\nabla \left(\chi_{\mu_{0}}\left(\epsilon\cdot\right) 
    \chi_{D}v \right) + \Xi_{d}\chi_{\mu_{0}}\left(\epsilon\cdot\right) 
    \chi_{D}v   
    +  \nabla \left( \chi_{\mu_{0}}\left(\epsilon\cdot\right)\chi_{D}\right) 
    \left( \Xi_{d}\nabla v - v\nabla\Xi_{d}\right). 
\nonumber\ee
Applying this relation for $v=\bar{u}_{\epsilon, Q}$ and $v=\check{u}_{\epsilon, Q, 2}$ 
respectively, and using $(\ref{limU})$, $(\ref{est w})$, $(\ref{Xinorm})$, 
$(\ref{Xipoint})$ and $(\ref{Xigradpoint})$ we find that 
\bern  
     \int_{\R^{n}_{+}} \left( \nabla\bar{u}_{\epsilon, Q} 
        \nabla\check{u}_{\epsilon, Q, 2} + 
        \bar{u}_{\epsilon, Q}\check{u}_{\epsilon, Q, 2} \right) dy 
        =  \int_{\R^{n}_{+}} \left( \nabla  \left( 
        \chi_{\mu_{0}}\left(\epsilon\cdot\right)\chi_{D}\bar{u}_{\epsilon, Q}\right) 
        \nabla\Xi_{d}  +  \chi_{\mu_{0}}\left(\epsilon\cdot\right) 
        \chi_{D}\bar{u}_{\epsilon, Q} \Xi_{d} \right) dy  \nonumber\\
       + o\left( e^{-\left(d+\frac{\sqrt{2}d\tan\alpha}{\sqrt{\tan^{2\alpha +1}}}\right) \left(1 + o\left(1\right)\right)} + 
        e^{-\frac{3}{2}d\left(1 + o\left(1\right)\right)}  + 
        e^{-\left( \frac{dD}{16} + \frac{d}{2}\sqrt{\frac{D\tan\alpha\left(\tan\alpha +1\right)}{\tan^{2}\alpha +1}} + \frac{2d\tan\alpha}{\sqrt{\tan^{2}\alpha +1}} \right) \left(1 + o\left(1\right)\right)}\right);
\nonumber\eern
\be    \int_{\R^{n}_{+}} \left(\vert\nabla\check{u}_{\epsilon, Q, 2}\vert^{2}  
        +  \check{u}_{\epsilon, Q, 2}^{2}\right) dy   = 
         \int_{\R^{n}_{+}} \left( \vert\nabla \left( 
           \chi_{\mu_{0}}\left(\epsilon\cdot\right) \chi_{D}\Xi_{d}\right) \vert^{2} + 
           \left( \chi_{\mu_{0}}\left(\epsilon\cdot\right) \chi_{D}\Xi_{d}\right)^{2} 
           \right) dy. 
\nonumber\ee
Using now the fact that, by our construction, the function 
$\chi_{\mu_{0}}\left(\epsilon\cdot\right)\chi_{D}u_{\epsilon, Q} = 
\chi_{\mu_{0}}\left(\epsilon\cdot\right)\chi_{D} 
\left(\bar{u}_{\epsilon, Q}+\check{u}_{\epsilon, Q}\right)$ 
vanishes on $d\left(\partial\Sigma_{D}-Q_{0}\right)$, 
from $(\ref{Xiprob})$ we obtain 
\bern   
    \int_{\R^{n}_{+}} \left( \nabla  \left( 
        \chi_{\mu_{0}}\left(\epsilon\cdot\right)\chi_{D}\bar{u}_{\epsilon, Q}\right) 
        \nabla\Xi_{d}  +  \chi_{\mu_{0}}\left(\epsilon\cdot\right) 
        \chi_{D}\bar{u}_{\epsilon, Q} \Xi_{d} \right) dy  \nonumber\\
        + \frac{1}{2}   \int_{\R^{n}_{+}} \left( \vert\nabla \left( 
           \chi_{\mu_{0}}\left(\epsilon\cdot\right) \chi_{D}\Xi_{d}\right) \vert^{2} + 
           \left( \chi_{\mu_{0}}\left(\epsilon\cdot\right) \chi_{D}\Xi_{d}\right)^{2} 
           \right) dy   \nonumber\\
        = \frac{1}{2} \int_{\R^{n}_{+}} \left( \nabla  \left( 
        \chi_{\mu_{0}}\left(\epsilon\cdot\right)\chi_{D}\bar{u}_{\epsilon, Q}\right) 
        \nabla\Xi_{d}  +  \chi_{\mu_{0}}\left(\epsilon\cdot\right) 
        \chi_{D}\bar{u}_{\epsilon, Q} \Xi_{d} \right) dy. 
\nonumber\eern
From $(\ref{esp4})$ and the last eight formulas we find 
\bern 
    I_{\epsilon}\left(u_{\epsilon, Q}\right) &=& 
        I_{\epsilon}\left(\bar{u}_{\epsilon, Q}\right)   + 
     \frac{1}{2}   \int_{\R^{n}_{+}} \left( \nabla\bar{u}_{\epsilon, Q} 
        \nabla\check{u}_{\epsilon, Q} + \bar{u}_{\epsilon, Q}\check{u}_{\epsilon, Q} 
        \right) dy   
        + \frac{1}{p+1} \int_{\Omega_{\epsilon}} \left( 
        \vert \bar{u}_{\epsilon, Q} \vert^{p+1} - 
        \vert u_{\epsilon, Q} \vert^{p+1} \right)  dy  \nonumber\\
        &+&    o\left( e^{-\frac{dD}{16}\left(1 + o\left(1\right)\right)}  + 
        e^{-d-\frac{\sqrt{2}d\tan\alpha}{\sqrt{\tan^{2}\alpha +1}}\left(1 + o\left(1\right)\right)}  +  
        \epsilon \left( e^{-\frac{3}{2}d\left(1+o\left(1\right)\right)} 
        + e^{-d-\frac{\sqrt{2}d\tan\alpha}{\sqrt{\tan^{2}\alpha +1}} \left(1+o\left(1\right)\right)} 
        \right) \right).
\nonumber\eern 
From $(\ref{limU})$, $(\ref{est w})$, $(\ref{estNeumann})$ and $(\ref{Xinorm})$ 
we have that 
\bern 
   \int_{\R^{n}_{+}} \left( \nabla\bar{u}_{\epsilon, Q} 
        \nabla\check{u}_{\epsilon, Q} + \bar{u}_{\epsilon, Q}\check{u}_{\epsilon, Q} 
        \right) dy  &=&  
        I'_{\epsilon}\left(\bar{u}_{\epsilon, Q}\right) 
        \left[\check{u}_{\epsilon, Q}\right]    +  \int_{\Omega_{\epsilon}} 
         \vert \bar{u}_{\epsilon, Q}\vert^{p} \check{u}_{\epsilon, Q} dy \nonumber\\
         &\leq &   C\epsilon^{2} e^{-d\left(1+o\left(1\right)\right)} + 
         \int_{\Omega_{\epsilon}} 
         \vert \bar{u}_{\epsilon, Q}\vert^{p} \check{u}_{\epsilon, Q} dy, 
\nonumber\eern 
and then 
\bern 
    I_{\epsilon}\left(u_{\epsilon, Q}\right) &=& 
        I_{\epsilon}\left(\bar{u}_{\epsilon, Q}\right)   + 
     \frac{1}{2}  \int_{\Omega_{\epsilon}} 
         \vert \bar{u}_{\epsilon, Q}\vert^{p} \check{u}_{\epsilon, Q} dy  
      +\frac{1}{p+1} \int_{\Omega_{\epsilon}} \left( 
        \vert \bar{u}_{\epsilon, Q} \vert^{p+1} - 
        \vert u_{\epsilon, Q} \vert^{p+1} \right)  dy  \nonumber\\
        &+&    o\left( e^{-\frac{dD}{16}\left(1 + o\left(1\right)\right)}  + 
        e^{-d-\frac{\sqrt{2}d\tan\alpha}{\sqrt{\tan^{2}\alpha +1}}\left(1 + o\left(1\right)\right)}  +  
        \epsilon \left( e^{-\frac{3}{2}d\left(1+o\left(1\right)\right)} 
        + e^{-d-\frac{\sqrt{2}d\tan\alpha}{\sqrt{\tan^{2}\alpha +1}} \left(1+o\left(1\right)\right)} 
        \right)  \right) \nonumber\\
         &+&   o\left( \epsilon^{2} e^{-d\left(1+o\left(1\right)\right)} \right).
\label{esp5}\eern  
Using a Taylor expansion we can write that 
\be  
     \vert\bar{u}_{\epsilon, Q}\vert^{p+1} - \vert u_{\epsilon, Q}\vert^{p+1} = 
    \left\{ 
\begin{array}{ll} 
   -\left(p+1\right) \vert\bar{u}_{\epsilon, Q}\vert^{p} \vert \check{u}_{\epsilon, Q}\vert + o\left(\vert\bar{u}_{\epsilon, Q}\vert^{p-1}\check{u}^{2}_{\epsilon, Q}\right) 
   \qquad & \mathrm{for\ }  \check{u}_{\epsilon, Q} \in \left(0, \frac{1}{2}\bar{u}_{\epsilon, Q}\right),    \\
    o\left(\vert\bar{u}_{\epsilon, Q}\vert^{p}\vert\check{u}_{\epsilon, Q}\vert 
    + \vert\check{u}_{\epsilon, Q}\vert^{p+1}\right)  \qquad  & \mathrm{otherwise},
 \end{array} 
\right.         
\label{esp6}\ee 
As for the estimate of $A_{2}$ in $(\ref{grad5})$, 
we divide the domain into the two regions $B_{1}$, $B_{2}$, and deduce that 
\bern 
    \frac{1}{p+1} \int_{\Omega_{\epsilon}} \left( 
   \vert \bar{u}_{\epsilon, Q} \vert^{p+1}-\vert u_{\epsilon, Q} \vert^{p+1} \right)dy 
   =  - \int_{\Omega_{\epsilon}}  \vert\bar{u}_{\epsilon, Q}\vert^{p} 
       \check{u}_{\epsilon, Q} dy  \nonumber\\
    + o\left( e^{-\frac{d\left(p+1\right)}{2} - \frac{2d\sqrt{2}\tan\alpha}{\sqrt{\tan^{2}\alpha +1}} \left(1+o\left(1\right)\right)} + 
    e^{-\frac{d\left(p+2\right)}{2}\left(1+o\left(1\right)\right)} + 
    e^{-d\left(1+o\left(1\right)\right)} e^{-\frac{1}{\epsilon^{c_{K,n}}}} \right). 
\nonumber\eern
Therefore using $(\ref{esp5})$ the energy becomes 
\bern 
    I_{\epsilon}\left(u_{\epsilon, Q}\right) &=& 
        I_{\epsilon}\left(\bar{u}_{\epsilon, Q}\right)   - 
     \frac{1}{2}  \int_{\Omega_{\epsilon}} 
         \vert \bar{u}_{\epsilon, Q}\vert^{p} \check{u}_{\epsilon, Q} dy  
      + o\left( e^{-d-\frac{\sqrt{2}d\tan\alpha}{\sqrt{\tan^{2}\alpha +1}} \left(1+o\left(1\right)\right)} + 
      e^{-\frac{d\left(p+2\right)}{2}\left(1+o\left(1\right)\right)} \right) \nonumber\\
       &+& o\left( \epsilon \left( \exp^{-\frac{3}{2}\left(1+o\left(1\right)\right)} + 
      e^{-d-\frac{\sqrt{2}d\vert\tan\alpha\vert}{\sqrt{\tan^{2}\alpha +1}} \left(1+o\left(1\right)\right)} \right)  + 
      \epsilon^{2} e^{-d\left(1+o\left(1\right)\right)} \right). 
\nonumber\eern
From $(\ref{Xipoint})$, the expression of $\check{u}_{\epsilon, Q}$ 
and the estimates in the same spirit as above one finds that 
\be  \int_{\Omega_{\epsilon}} 
         \vert \bar{u}_{\epsilon, Q}\vert^{p} \check{u}_{\epsilon, Q} dy  = 
      -\left( e^{-2d\left(1+o\left(1\right)\right)}   + 
        e^{-d-\frac{\sqrt{2}d\tan\alpha}{\sqrt{\tan^{2}\alpha}+1}\left(1+o\left(1\right)\right)} \right), 
\nonumber\ee
and hence from Proposition $\ref{prop:estNeumann}$ we finally find 
\bern 
    I_{\epsilon}\left(u_{\epsilon, Q}\right) &=& \tilde{C}_{0} 
      -  \tilde{C}_{1}\epsilon H\left(\epsilon Q\right) + O\left( \epsilon^{2}\right) 
      + e^{-2d\left(1+o\left(1\right)\right)}   + 
      e^{-d-\frac{\sqrt{2}d\tan\alpha}{\sqrt{\tan^{2}\alpha}+1}\left(1+o\left(1\right)\right)}   \nonumber\\
      &+&  o\left( e^{-d-\frac{\sqrt{2}d\tan\alpha}{\sqrt{\tan^{2}\alpha +1}} \left(1+o\left(1\right)\right)} + 
      e^{-\frac{d\left(p+2\right)}{2}\left(1+o\left(1\right)\right)} \right) \nonumber\\
       &+& o\left( \epsilon \left( e^{-\frac{3}{2}\left(1+o\left(1\right)\right)} + 
      e^{-d-\frac{\sqrt{2}d\tan\alpha}{\sqrt{\tan^{2}\alpha +1}} \left(1+o\left(1\right)\right)} \right)  + 
      \epsilon^{2} e^{-d\left(1+o\left(1\right)\right)} \right). 
\label{esp7}\eern
The conclusion follows from the Schwartz inequality. 
\end{proof}

We give also a related result about the computation of the derivative of the energy 
with respect to $Q$. 
Again, we will be rather sketchy in the proof since the arguments are 
quite similar to the previous ones. 
\begin{proposition} 
For $\epsilon\rightarrow 0$ and $d=d\left(Q\right)\rightarrow+\infty$, the following expansions hold
\bern  \frac{\partial}{\partial Q_{T}}I_{\epsilon}\left(u_{\epsilon, Q}\right) &=&    
         - \tilde{C}_{1}\epsilon^{2}\nabla_{T} H\left(\epsilon Q\right) + 
               o\left(\epsilon^{2}\right);  \label{espansione1}\\
 \frac{\partial}{\partial Q_{d}}I_{\epsilon}\left(u_{\epsilon, Q}\right) &=&    
         - \tilde{C}_{1}\epsilon^{2} \nabla_{d}H\left(\epsilon Q\right) -   
              e^{\left( -d- \frac{d\sqrt{2} \tan\alpha}{\sqrt{\tan^{2}\alpha +1}} \right) \left(1+o\left(1\right)\right)}  + 
               o\left(\epsilon^{2}\right), \label{espansione2}\eern            
where $\tilde{C}_{0}$ and $\tilde{C}_{1}$ are the constants in Proposition 
$\ref{prop:estNeumann}$.
\label{pr:espansione deriv}\end{proposition}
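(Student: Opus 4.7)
The plan is to differentiate the asymptotic expansion of Proposition~\ref{pr:espansione} term by term with respect to the parameter $Q$, taking care to re-estimate each error contribution after differentiation. The dependence of $I_\epsilon(u_{\epsilon,Q})$ on $Q$ enters only through (a) the Neumann term $-\tilde{C}_1\epsilon H(\epsilon Q)$, (b) the two Dirichlet exponentials, which depend on $Q$ solely through $d=d(Q)$, and (c) the remainder $o(\epsilon^2)$, which I will need to re-examine. The decomposition into a tangential variation $Q_T$ (along level sets of $d$) and a normal variation $Q_d$ then separates the analysis very cleanly.

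For $\partial/\partial Q_T$: since $d$ is constant in the tangential direction, both exponentials vanish under differentiation, and the only main contribution comes from the Neumann term, which gives $-\tilde{C}_1\epsilon^2\nabla_T H(\epsilon Q)$. For $\partial/\partial Q_d$: the Neumann term contributes $-\tilde{C}_1\epsilon^2\nabla_d H(\epsilon Q)$, while differentiating $e^{-d(1+\sqrt{2}\tan\alpha/\sqrt{\tan^2\alpha+1})(1+o(1))}$ in $d$ produces a negative multiple (absorbable into a new $(1+o(1))$ factor) of the same exponential. The other exponential $e^{-2d(1+o(1))}$ differentiates to something of the same type, which for $\alpha$ in the regime where the leading exponential dominates is absorbed into that term and otherwise into $o(\epsilon^2)$, given the scale $d\sim|\log\epsilon|$ at which we are working.

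The main work, and the step I would carry out with most care, is to verify that the $o(\epsilon^2)$ remainder of Proposition~\ref{pr:espansione} retains this order after $Q$-differentiation. To do so I would revisit the derivation of~(\ref{esp7}): (i)~the terms arising from replacing $g^{ij}$ by $\delta^{ij}$ have $Q$-derivatives of size $O(\epsilon^2|y|)$ by Remark~\ref{rem:gij}(ii), which stays $o(\epsilon^2)$ after integration against the exponentially decaying factors; (ii)~the crossover terms involving $\chi_{\mu_0}$, $\chi_D$, $\chi_0$ and $w_Q$ have $Q$-derivatives controlled by the same pointwise bounds~(\ref{limU}), (\ref{est w}), (\ref{Xipoint}), (\ref{Xigradpoint}) that produced the original error; (iii)~for the $L^{p+1}$ Taylor expansion~(\ref{esp6}), differentiation gives extra factors of $\partial u_{\epsilon,Q}/\partial Q$ controlled by H\"{o}lder together with the exponential decay. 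A small additional complication in the $Q_d$ direction is that $\Xi_d$ itself depends on $d$; here I would use the norm bound~(\ref{Xidpoint}) and the pointwise bounds~(\ref{dpoint})--(\ref{dgradpoint}) on $\partial\Xi_d/\partial d$ to repeat the crossing-term estimates, checking that no new term of order comparable to $e^{-d(1+\sqrt{2}\tan\alpha/\sqrt{\tan^2\alpha+1})(1+o(1))}$ appears.

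The main obstacle is precisely this last step: controlling the $Q_d$-derivative of the error term uniformly. Naively one might fear that differentiating an $o(\epsilon^2)$ quantity in $Q_d$ could produce something larger than $o(\epsilon^2)$, since $d$ varies at speed $1$ while the exponentials in the remainder have exponents of order $d$. The resolution is that the estimates for $\partial\Xi_d/\partial d$ in Subsection~\ref{case 1} exhibit the \emph{same} exponential decay as $\Xi_d$ itself (with only polynomial corrections), so each error term is differentiated without loss of exponential order, and under the balance $d\sim |\log\epsilon|$ all such terms remain $o(\epsilon^2)$. Once this is established, formulas~(\ref{espansione1}) and~(\ref{espansione2}) follow directly.
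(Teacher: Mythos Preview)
Your plan is correct and close in spirit to the paper's argument, but organized a bit differently. The paper does not literally differentiate the expansion of Proposition~\ref{pr:espansione}; instead it writes
\[
\frac{\partial}{\partial Q}I_\epsilon(u_{\epsilon,Q})
= I'_\epsilon(u_{\epsilon,Q})\Bigl[\tfrac{\partial u_{\epsilon,Q}}{\partial Q}\Bigr]
\]
and, using the splitting $u_{\epsilon,Q}=\bar u_{\epsilon,Q}+\check u_{\epsilon,Q}$, decomposes this into $\partial_Q I_\epsilon(\bar u_{\epsilon,Q})$ (handled by Proposition~\ref{prop:estNeumann}), several cross terms bounded via the same pointwise and norm estimates you cite, and a final Taylor term $\int(\bar u^{\,p}-u^{\,p})\partial_Q u$. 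The leading exponential in (\ref{espansione2}) is then read off directly from
\[
p\int_{\R^n_+} U_Q^{\,p-1}\,\check u_{\epsilon,Q}\,\nabla U_Q\cdot q\,dy,
\]
using (\ref{Xipoint}), rather than by formally differentiating $e^{-cd(1+o(1))}$ in $d$.

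The practical difference is that the paper's route sidesteps the one delicate point in your outline: an asymptotic of the form $e^{-cd(1+o(1))}$ cannot, in general, be differentiated termwise in $d$ without losing control of the sign and size, so the exponential in (\ref{espansione2}) must be \emph{recomputed} from the integral, not obtained by differentiating the formula in Proposition~\ref{pr:espansione}. You already anticipate this by proposing to revisit the derivation and to use (\ref{Xidpoint}), (\ref{dpoint}), (\ref{dgradpoint}); once you do that, your computation and the paper's coincide term by term. Either organization yields (\ref{espansione1})--(\ref{espansione2}).
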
  
\begin{proof} 
After some elementary calculations, recalling the definition of 
$\bar{u}_{\epsilon, Q}$ in $(\ref{function})$, we can write
\bern 
    I'_{\epsilon}\left(u_{\epsilon, Q}\right) 
       \left[\frac{\partial u_{\epsilon, Q}}{\partial Q}\right]  &=& 
       \frac{\partial}{\partial Q}I_{\epsilon}\left(\bar{u}_{\epsilon, Q}\right) + 
       \int_{\Omega_{\epsilon}} \left( \nabla_{g} \bar{u}_{\epsilon, Q} 
       \nabla_{g}\frac{\partial \check{u}_{\epsilon, Q}}{\partial Q}  + 
       \bar{u}_{\epsilon, Q} \frac{\partial \check{u}_{\epsilon, Q}}{\partial Q} 
       \right) dy  - \int_{\Omega_{\epsilon}} \bar{u}^{p}_{\epsilon, Q} 
        \frac{\partial \check{u}_{\epsilon, Q}}{\partial Q} dy   \nonumber\\
      &+&   \int_{\Omega_{\epsilon}} \left( \nabla_{g} \check{u}_{\epsilon, Q} 
       \nabla_{g}\frac{\partial u_{\epsilon, Q}}{\partial Q}  + 
       \check{u}_{\epsilon, Q} \frac{\partial u_{\epsilon, Q}}{\partial Q} 
       \right) dy  +  \int_{\Omega_{\epsilon}} \left( \bar{u}^{p}_{\epsilon, Q} - 
        u^{p}_{\epsilon, Q} \right)  \frac{\partial u_{\epsilon, Q}}{\partial Q} dy, 
\label{der1}\eern
where $\check{u}_{\epsilon, Q}=u_{\epsilon, Q}-\bar{u}_{\epsilon, Q}$ 
was defined in $(\ref{funct2})$. 
The first term on the right hand side is estimated in 
Proposition $\ref{prop:estNeumann}$. 
The next two, integrating by parts and using Proposition 
$\ref{prop:estNeumann}$, can be estimated in terms of a quantity like 
\be    C\epsilon^{2} \int_{\Omega_{\epsilon}}\left( 1+\vert y\vert^{K}\right) 
      \vert \frac{\partial \check{u}_{\epsilon, Q}}{\partial Q}\vert dy. 
\nonumber\ee
From the same arguments as in the proof of Proposition $\ref{pr:est2}$ 
one deduces that the latter integral is of order 
$\epsilon^{2}\left(e^{-2d\left(1+o\left(1\right)\right)} + 
^{-d-\frac{\sqrt{2}d\tan\alpha}{\sqrt{\tan^{2}\alpha +1}}\left(1+o\left(1\right)\right)}\right)$. 
To control the first integral in the last line of $(\ref{der1})$ 
we can reason as for the estimate of $A_{1,2}$ in the proof of Proposition 
$\ref{pr:est}$ to see that this is of order 
$e^{-d\left(1+o\left(1\right)\right)}\left(\epsilon + 
e^{-d\left(1+o\left(1\right)\right)}\right) 
\norm{\frac{\partial u_{\epsilon, Q}}{\partial Q}}_{H^{1}_{D}\left(\Omega_{\epsilon}\right)}$. 
From the proof of Proposition $\ref{pr:est2}$ one can deduce that 
$\norm{\frac{\partial u_{\epsilon, Q}}{\partial Q}}_{H^{1}_{D}\left(\Omega_{\epsilon}\right)}\leq C\left(\epsilon^{2} + 
e^{-d\left(1+o\left(1\right)\right)}\right)$, 
and hence the integral under interest is controlled by 
$o\left(\epsilon^{2}\right)+e^{-3d\left(1+o\left(1\right)\right)}$. 

Finally, the last term in $(\ref{der1})$ can be estimated using a 
Taylor expansion as for the term $A_{2}$ in the proof of 
Proposition $\ref{pr:est}$, and up to higher order is given by 
\be    p \int_{\R^{n}_{+}} U^{p-1}_{Q}\left(y\right)\check{u}_{\epsilon, Q} 
        \nabla U_{Q}\left(y\right)\cdot q dy, 
\nonumber\ee
where $q$ stands either for the variation of $Q$ in the coordinates $y$. 
If $q$ preserves $d$, the latter integral gives a negligible contribution, 
and we find $(\ref{espansione1})$. 
If instead $q$ is directed toward the gradient of $d$ the above estimates 
(and in particular $(\ref{Xipoint})$) allow to deduce $(\ref{espansione2})$. 
\end{proof}

\subsection{Finite-dimensional reduction and study of the constrained functional}  \label{sec:var}
 
We apply now the abstract setting described in Subsection $\ref{perturbation}$. 
In fact, the following two Lemmas hold. 
\begin{lemma} 
If $C_{\Omega}$ is as in the previous section and if we choose 
\be      Z_{\epsilon} = \left\lbrace u_{\epsilon, Q} : C_{\Omega}<d<\frac{1}{\epsilon C_{\Omega}}\right\rbrace ,    \nonumber\ee
then the properties $i)$, $iii)$ and $iv)$ in Subsection $\ref{perturbation}$ hold true, 
with $\gamma=\min\left\lbrace 1, p-1\right\rbrace$. 
\label{lem:varcritica}\end{lemma}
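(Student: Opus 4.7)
The plan is to verify properties $(i)$, $(iii)$, $(iv)$ separately, with $(iv)$ being the main obstacle. Property $(i)$ is essentially structural: I would parametrize $Z_\epsilon$ by $Q$, interpreted as a pair of a tangential coordinate along $\Gamma_\epsilon$ and a distance $d \in (C_\Omega, 1/(\epsilon C_\Omega))$ from the interface, giving a smooth $(n-1)$-dimensional manifold. The map $Q \mapsto u_{\epsilon,Q}$ is $C^3$ because each building block ($U_Q$, $w_Q$, $\Xi_d$, and the cut-offs $\chi_{\mu_0}$, $\chi_D$, $\chi_0$) is smooth in $Q$; the bound on the $C^3$ norm is uniform in $\epsilon$ thanks to Remark \ref{rem:gij}(ii) (so derivatives of $g_{ij}$ contribute $O(\epsilon^2)$) and the pointwise bounds $(\ref{est w})$, $(\ref{Xipoint})$, $(\ref{Xigradpoint})$, $(\ref{Xidpoint})$, $(\ref{dpoint})$, $(\ref{dgradpoint})$.

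Property $(iii)$ follows from the explicit form
\[
I''_\epsilon(u)[v,w] = \int_{\Omega_\epsilon}(\nabla v\cdot\nabla w + vw)\,dy - p\int_{\Omega_\epsilon}|u|^{p-1}vw\,dy,
\]
where only the last term depends on $u$. Since the scalar map $t\mapsto|t|^{p-1}$ is Lipschitz when $p\geq 2$ and H\"older of exponent $p-1$ when $1<p<2$, one obtains precisely the exponent $\gamma=\min\{1,p-1\}$. Combined with H\"older's inequality and the subcritical Sobolev embedding $H^1\hookrightarrow L^{p+1}$, this yields $\|I''_\epsilon\|_{C^\gamma}\leq C$ on a fixed-radius $H^1$-neighborhood of $Z_\epsilon$, since the $u_{\epsilon,Q}$ are uniformly bounded in $H^1$.

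The main obstacle is $(iv)$, uniform invertibility of $P_z I''_\epsilon(z)$ on $(T_z Z_\epsilon)^\perp$. My plan is to argue by contradiction and blow-up. If $(iv)$ fails, one finds sequences $\epsilon_k\to 0$, $z_k=u_{\epsilon_k,Q_k}$, and $v_k\in(T_{z_k}Z_{\epsilon_k})^\perp$ with $\|v_k\|_{H^1}=1$ and $\|P_{z_k}I''_{\epsilon_k}(z_k)[v_k]\|\to 0$. After translating the peak to the origin, $z_k\to U$ on compacts in the appropriate limit domain (a half-space, a wedge of opening $\alpha$, or $\R^{n}$, depending on the behavior of $d_k$); along a subsequence $v_k\rightharpoonup v_\infty$ weakly in $H^1$ with local strong convergence, and the limit satisfies $(-\Delta+1-pU^{p-1})v_\infty=0$ with the inherited boundary conditions. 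The classical non-degeneracy of $U$ (see Lemma $9.3$ of \cite{AM} together with \cite{NT1}, \cite{NT2}) identifies the kernel of this linearized operator with the tangential translations $\partial_{y_i}U$, which are exactly the leading order of $\partial_Q u_{\epsilon_k,Q_k}$; the orthogonality $v_k\perp T_{z_k}Z_{\epsilon_k}$ then passes to the limit and forces $v_\infty\equiv 0$. Exponential decay of $U$ together with standard elliptic $L^\infty$ estimates on $v_k$ rule out escape of $H^1$-mass at infinity and give $\|v_k\|_{H^1}\to 0$, contradicting $\|v_k\|=1$. The delicate points will be: identifying the correct limit geometry depending on whether $d_k\to\infty$ or $d_k$ stays bounded; checking that the tangent space $T_{z_k}Z_{\epsilon_k}$ recovers the full kernel direction in a quantitatively uniform way; and handling the small error terms coming from $\check{u}_{\epsilon,Q}$ in the Hessian calculations, which were already shown to be negligible in the proof of Proposition \ref{pr:est2}.
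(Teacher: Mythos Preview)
Your proposal is correct and follows essentially the same approach as the paper. The paper's proof is extremely terse: it declares $(i)$ and $(iii)$ immediate (with $\gamma$ coming from ``standard properties of Nemitski operators'') and reduces $(iv)$ to the fact that the kernel of the linearization of $(\ref{prob})$ in the half space is spanned by $\partial_{x_1}U,\ldots,\partial_{x_{n-1}}U$ (citing \cite{Oh}) together with ``localization arguments'' from \cite{AM}; your contradiction/blow-up argument is precisely what those localization arguments amount to, so you have simply unpacked what the paper leaves to references.
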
 
\begin{proof} 
It is immediate to prove that $i)$ and $iii)$ hold; 
in particular, the value of $\gamma$ comes from 
the standard properties of Nemitski operators. 
Property $iv)$ can be easily deduced from the fact that the kernel of 
the linearization of $(\ref{prob})$ in the half space is spanned by 
$\frac{\partial U}{\partial x_{1}}, \ldots, \frac{\partial U}{\partial x_{n-1}}$, 
as proved in \cite{Oh}, and from some localization arguments which can be 
found in Subsections $4.2$, $9.2$ and $9.3$ of \cite{AM}.   
\end{proof}

\begin{lemma} 
For any small positive constant $\delta$, if we take
\be      Z_{\epsilon} = \left\lbrace u_{\epsilon, Q} : \left(2-\delta\right)\vert\log\epsilon\vert <d<\frac{1}{\epsilon C_{\Omega}}\right\rbrace ,    \nonumber\ee
then also property $ii)$ in Subsection $\ref{perturbation}$ holds true, 
with 
\be   f\left(\epsilon\right)= \epsilon ^{\min \left\lbrace  3-\delta, 
          \frac{p+1}{2}\left(2-\delta\right), 
\left(2-\delta\right) \left(\frac{1}{2}\sqrt{\frac{D\tan\alpha\left(\tan\alpha +1\right)}{\tan^{2}\alpha +1}} + \frac{2\tan\alpha}{\sqrt{\tan^{2}\alpha +1}}\right), 
\left(2-\delta\right) \left(\frac{p}{2} + \frac{\sqrt{2}\tan\alpha}{\sqrt{\tan^{2}\alpha +1}}\right) 
     \right\rbrace }  .    \nonumber\ee
\label{lem:varcritica2}\end{lemma}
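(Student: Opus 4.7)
The plan is to specialize the already-proved estimates of Propositions \ref{pr:est}, \ref{pr:est2}, \ref{pr:est3}, and \ref{pr:est4} to the shrunken manifold $Z_\epsilon$, in which every $Q$ satisfies $d=d(Q)>(2-\delta)|\log\epsilon|$. Under this restriction each factor of the form $e^{-d\cdot c\cdot(1+o(1))}$ appearing in those estimates is bounded above by $\epsilon^{c(2-\delta)(1+o(1))}$, and the small correction $o(1)$, which tends to zero as $d\to\infty$, can be uniformly absorbed into an arbitrarily small enlargement of $\delta$.

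For the representative case $0<\alpha<\frac{\pi}{2}$, I would plug this reduction into the five summands comprising the right-hand side of $(\ref{estderiv})$ in Proposition \ref{pr:est}. The resulting contributions are of orders $\epsilon^2$, $\epsilon^{3-\delta}$, $\epsilon^{(2-\delta)\left(\frac{1}{2}\sqrt{D\tan\alpha(\tan\alpha+1)/(\tan^2\alpha+1)}+2\tan\alpha/\sqrt{\tan^2\alpha+1}\right)}$, $\epsilon^{(p+1)(2-\delta)/2}$, and $\epsilon^{(2-\delta)(p/2+\sqrt{2}\tan\alpha/\sqrt{\tan^2\alpha+1})}$; their sum is bounded, for $\epsilon$ small enough, by a constant multiple of $\epsilon$ raised to the minimum of these exponents, which is precisely the $f(\epsilon)$ displayed in the statement. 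Applying the identical substitution to the analogous bound of Proposition \ref{pr:est2} on $\|I''_\epsilon(u_{\epsilon,Q})[q]\|$ yields the same upper bound on $\|I''_\epsilon(u_{\epsilon,Q})[q]\|/\|q\|$, so both halves of property $ii)$ are established in a single stroke.

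The remaining ranges $\frac{\pi}{2}\leq\alpha\leq\pi$ and $\pi<\alpha<2\pi$ are handled analogously, using Propositions \ref{pr:est3} and \ref{pr:est4} in place of Propositions \ref{pr:est} and \ref{pr:est2}; only the explicit list of exponents changes, and one obtains the natural analogue of $f(\epsilon)$ in each range, the $\tan\alpha$-dependent exponents being replaced by the constants appearing in those statements.

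The only technical point --- rather than a genuine obstacle --- is to make the $(1+o(1))$ factors inside the exponentials uniform with respect to $Q\in Z_\epsilon$: since $d>(2-\delta)|\log\epsilon|\to+\infty$ as $\epsilon\to 0$, each such correction is uniformly small on $Z_\epsilon$, so it degrades each exponent by at most an arbitrarily small amount which may be absorbed into $\delta$ at the cost of taking $\epsilon$ somewhat smaller. This is a routine $\epsilon$-$\delta$ argument and no substantially new input is needed beyond the four propositions cited above.
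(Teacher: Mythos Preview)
Your proposal is correct and follows exactly the same approach as the paper: the paper's entire proof is the single sentence ``This lemma simply follows from Propositions~\ref{pr:est} and~\ref{pr:est2}'', and you have spelled out precisely that substitution of $d>(2-\delta)|\log\epsilon|$ into the exponential terms of those estimates. Your discussion of the other angle ranges via Propositions~\ref{pr:est3} and~\ref{pr:est4} is extra (the lemma as stated lives in the section restricted to $0<\alpha<\frac{\pi}{2}$), but it is not incorrect.
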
  
\begin{proof} 
This lemma simply follows from Propositions $\ref{pr:est}$ and $\ref{pr:est2}$.    
\end{proof}

As a corollary of the above two lemmas we can apply Proposition $\ref{pr:fi}$ 
and Theorem $\ref{th:rid}$, so we expand next the reduced functional and its 
gradient on the natural constraint $\tilde{Z}_{\epsilon}$. 
\begin{proposition} 
With the choice of $\tilde{Z}_{\epsilon}$ in Lemma $\ref{lem:varcritica2}$, 
if $w_{\epsilon}$ is given by Proposition $\ref{pr:fi}$, then we have 
\bern  \mathbf{I}_{\epsilon}\left(u_{\epsilon, Q}\right) := 
       I_{\epsilon}\left(u_{\epsilon, Q} + 
             w_{\epsilon} \left(u_{\epsilon, Q}\right) \right) \nonumber\\
          = \tilde{C}_{0} - \tilde{C}_{1}\epsilon H\left(\epsilon Q\right) + 
       e^{-2d\left(1+o\left(1\right)\right)} + 
       e^{-d\left(1+\frac{\sqrt{2}\tan\alpha}{\sqrt{\tan^{2}\alpha +1}}\right)\left(1+o\left(1\right)\right)} + o\left(\epsilon^{2}\right); 
\label{red}\eern
\bern  \frac{\partial}{\partial Q_{T}}  \mathbf{I}_{\epsilon}\left(u_{\epsilon, Q}\right) 
      &=&   - \tilde{C}_{1}\epsilon^{2}\nabla_{T} H\left(\epsilon Q\right)  + 
      o\left(\epsilon^{2}\right); 
\label{red1}\\
  \frac{\partial}{\partial Q_{d}}  \mathbf{I}_{\epsilon}\left(u_{\epsilon, Q}\right) 
      &=&   - \tilde{C}_{1}\epsilon^{2}\nabla_{d} H\left(\epsilon Q\right)  + 
      e^{-d\left(1+\frac{\sqrt{2}\tan\alpha}{\sqrt{\tan^{2}\alpha +1}}\right)\left(1+o\left(1\right)\right)}  +
      o\left(\epsilon^{2}\right), 
\label{red2}\eern
as $\epsilon\rightarrow 0$, where $\tilde{C}_{0}$ and $\tilde{C}_{1}$ 
are as in Proposition $\ref{pr:espansione}$ 
and where $Q_{T}$, $Q_{d}$ are as in the proof of Proposition $\ref{pr:est2}$. 
\label{pr:reduced}\end{proposition}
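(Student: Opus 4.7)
The plan is to leverage the Taylor expansions of $I_\epsilon$ and its derivative about $u_{\epsilon,Q}$, together with the small-norm estimates on $w_\epsilon$ from Proposition \ref{pr:fi} and the derivative bounds in Propositions \ref{pr:est} and \ref{pr:est2}, so as to reduce Proposition \ref{pr:reduced} to the expansions already worked out in Propositions \ref{pr:espansione} and \ref{pr:espansione deriv}. The key fact is that once one knows $\norm{w_\epsilon(u_{\epsilon,Q})}\leq C_{1} f(\epsilon)$, any correction arising from replacing $u_{\epsilon,Q}$ by $u_{\epsilon,Q}+w_\epsilon$ should be of order $f(\epsilon)^{2}$ or smaller, and hence negligible with respect to $\epsilon^{2}$ provided $\delta$ is taken small enough and $D$ large enough.

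To prove \eqref{red}, I would write $z=u_{\epsilon,Q}$ and $w=w_\epsilon(z)$ and Taylor expand
\be I_\epsilon(z+w) = I_\epsilon(z) + I'_\epsilon(z)[w] + \tfrac{1}{2}\, I''_\epsilon(z+\theta w)[w,w], \qquad \theta\in(0,1). \nonumber\ee
Using $\norm{I'_\epsilon(z)}\leq f(\epsilon)$ from Proposition \ref{pr:est}, the uniform bound on $\norm{I''_\epsilon}$ from hypothesis $iii)$ in Subsection \ref{perturbation}, and $\norm{w}\leq C_{1}f(\epsilon)$ from Proposition \ref{pr:fi}, both extra terms are of order $f(\epsilon)^{2}$. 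A direct inspection of each exponent appearing in the definition of $f(\epsilon)$ in Lemma \ref{lem:varcritica2} shows that, for $\delta>0$ small and $D$ large, every one of them exceeds $1$, whence $f(\epsilon)^{2}=o(\epsilon^{2})$. Combining with Proposition \ref{pr:espansione} yields \eqref{red}.

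For \eqref{red1}–\eqref{red2}, I would differentiate $\mathbf{I}_\epsilon(u_{\epsilon,Q})=I_\epsilon(z+w)$ by the chain rule, obtaining
\be \frac{\partial \mathbf{I}_\epsilon}{\partial Q} = I'_\epsilon(z+w)\Big[\frac{\partial z}{\partial Q}\Big] + I'_\epsilon(z+w)\Big[\frac{\partial w}{\partial Q}\Big]. \nonumber\ee
The auxiliary equation in \eqref{aux_bif} tells that $I'_\epsilon(z+w)$ lies, under Riesz identification, in $T_{z}Z_\epsilon$; since $w_\epsilon$ takes values in $W_z=(T_zZ_\epsilon)^{\perp}$, the second summand reduces (up to a contribution coming from the $z$-dependence of $W_z$) to a quantity bounded by $\norm{w}\,\norm{w'_\epsilon}\leq C f(\epsilon)^{1+\gamma}$, which is $o(\epsilon^{2})$ by the same exponent inspection. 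For the first summand, a first-order Taylor expansion gives
\be I'_\epsilon(z+w)[\partial_{Q}z] = I'_\epsilon(z)[\partial_{Q}z] + I''_\epsilon(z)[w,\partial_{Q}z] + O(\norm{w}^{1+\gamma}\norm{\partial_{Q}z}). \nonumber\ee
Here $I'_\epsilon(z)[\partial_{Q}z]=\partial_{Q}I_\epsilon(u_{\epsilon,Q})$ is already expanded in Proposition \ref{pr:espansione deriv}; since $\partial_{Q}z$ is tangent to $Z_\epsilon$, Proposition \ref{pr:est2} yields $\norm{I''_\epsilon(z)[\partial_{Q}z]}\leq f(\epsilon)\norm{\partial_{Q}z}$, so the cross term is $O(f(\epsilon)^{2}\norm{\partial_{Q}z})=o(\epsilon^{2})$, and the Hölder remainder has the same order. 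Assembling these pieces produces \eqref{red1} and \eqref{red2}.

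The main technical obstacle will be making the natural-constraint reduction of the second summand $I'_\epsilon(z+w)[\partial w/\partial Q]$ fully rigorous: since the fibre $W_z$ varies with $z$, the vector $\partial w/\partial Q$ need not lie strictly in $W_z$, and one must carefully track its small component in $T_zZ_\epsilon$. A related subtlety arises in the regime $1<p<2$, where the Hölder exponent of $I''_\epsilon$ is only $\gamma=p-1<1$, so the remainder $O(\norm{w}^{1+\gamma})$ must still beat $\epsilon^{2}$; this follows from the same case analysis of the exponents in Lemma \ref{lem:varcritica2} combined with the lower bound $d\geq(2-\delta)|\log\epsilon|$, which pushes each exponent safely above the required threshold.
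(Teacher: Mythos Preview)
Your approach is essentially the same as the paper's: Taylor expand $I_\epsilon$ around $u_{\epsilon,Q}$, use Proposition~\ref{pr:fi} to control $\norm{w_\epsilon}$, and reduce to Propositions~\ref{pr:espansione} and~\ref{pr:espansione deriv}. For $p\geq 2$ (so $\gamma=1$) your argument goes through exactly as the paper's does, and the orthogonality trick you invoke for $I'_\epsilon(z+w)[\partial_Q w]$ is a harmless variation on the direct bound the paper uses.

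There is, however, a genuine gap in your treatment of the range $1<p<2$. You assert that the remainder $O\bigl(\norm{w}^{1+\gamma}\bigr)=O\bigl(f(\epsilon)^{p}\bigr)$ is $o(\epsilon^{2})$ ``from the same case analysis of the exponents in Lemma~\ref{lem:varcritica2}''. This is false in general: the exponent $\frac{p+1}{2}(2-\delta)$ appearing in $f(\epsilon)$ gives, after raising to the power $p$, the exponent $p\cdot\frac{p+1}{2}(2-\delta)$, and as $p\downarrow 1$ this tends to $2-\delta<2$. Similarly the exponent $(2-\delta)\bigl(\tfrac{p}{2}+\tfrac{\sqrt{2}\tan\alpha}{\sqrt{\tan^{2}\alpha+1}}\bigr)$, multiplied by $p$, can fall below $2$ when both $p$ and $\alpha$ are small. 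So $f(\epsilon)^{p}$ need \emph{not} be $o(\epsilon^{2})$, and a naive H\"older remainder does not close the argument.

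The paper is aware of this and does not claim that the exponent count suffices: for $1<p<2$ it explicitly defers to the argument in Proposition~4.5 of \cite{GMMP1}, where a more refined estimate is carried out that avoids relying on $f(\epsilon)^{1+\gamma}=o(\epsilon^{2})$. You should either reproduce that argument or at least acknowledge that an additional ingredient is needed in this regime, rather than asserting that the same inspection of exponents settles the matter.
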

\begin{proof} 
By Propositions $\ref{pr:fi}$ and $\ref{pr:est}$ we have that 
\bern 
    \norm{w_{\epsilon}\left(u_{\epsilon, Q}\right)} \leq 
     C_{1} \norm{I'_{\epsilon}\left(u_{\epsilon, Q}\right)} 
     \leq  C \left( \epsilon^{2} + 
     \epsilon e^{-d\left(1+o\left(1\right)\right)} \right) +  \nonumber\\
      C\left( e^{-d\left(\frac{1}{2}\sqrt{\frac{D\tan\alpha\left(\tan\alpha +1\right)}{\tan^{2}\alpha +1}} + \frac{2\tan\alpha}{\sqrt{\tan^{2}\alpha +1}}\right) \left(1+o\left(1\right)\right)}    + 
     e^{-d\left(\frac{p}{2}+ \frac{\sqrt{2}\tan\alpha}{\sqrt{\tan^{2}\alpha +1}}\right) \left(1+o\left(1\right)\right)}  + 
     e^{-\frac{d\left(p+1\right)}{2} \left(1+o\left(1\right)\right)} \right). 
\nonumber\eern 
From the regularity of $I_{\epsilon}$ and Proposition $\ref{pr:espansione}$ 
we then have 
\bern 
    I_{\epsilon}\left(u_{\epsilon, Q}+w_{\epsilon}\left(u_{\epsilon, Q}\right) \right) 
     = I_{\epsilon}\left(u_{\epsilon, Q}\right)  + 
         I'_{\epsilon}\left(u_{\epsilon, Q}\right) 
         \left[w_{\epsilon}\left(u_{\epsilon, Q}\right)\right] + 
         o\left(\norm{w_{\epsilon}\left(u_{\epsilon, Q}\right)}^{2}\right)  \nonumber\\
     =  \tilde{C}_{0} - \tilde{C}_{1}\epsilon H\left(\epsilon Q\right) + 
       e^{-2d\left(1+o\left(1\right)\right)} + 
       e^{-d\left(1+\frac{\sqrt{2}\tan\alpha}{\sqrt{\tan^{2}\alpha +1}}\right)\left(1+o\left(1\right)\right)} + o\left(\epsilon^{2}\right)  \nonumber\\
     + o\left(\epsilon^{6-2\delta} + \epsilon^{\left(p+1\right)\left(2-\delta\right)} 
      +  \epsilon^{\left(2-\delta\right)\left(\sqrt{\frac{D\tan\alpha\left(\tan\alpha +1\right)}{\tan^{2}\alpha +1}} + \frac{4\tan\alpha}{\sqrt{\tan^{2}\alpha +1}}\right)} 
        + \epsilon^{\left(2-\delta\right)\left(p + \frac{2\sqrt{2}\tan\alpha}{\sqrt{\tan^{2}\alpha +1}}\right)}\right). 
\nonumber\eern 
This immediately gives $(\ref{red})$, since $p>1$ and $\delta$ is small. 

The remaining two estimates are also rather immediate for $p\geq 2$ : 
in fact in this case property $iii)$ in Subsection $\ref{perturbation}$ holds true 
for $\gamma =1$, so we also have 
$\norm{\partial_{Q}w_{\epsilon}}\leq Cf\left(\epsilon\right)$ 
by the last statement in Proposition $\ref{pr:fi})$. 
This, together with the Lipschitzianity of $I'_{\epsilon}$ implies that 
\bern 
    \frac{\partial}{\partial Q} \mathbf{I}_{\epsilon}\left(u_{\epsilon, Q}\right) 
     = I'_{\epsilon}\left(u_{\epsilon, Q}+w_{\epsilon}\right) 
         \left[ \partial_{Q}u_{\epsilon, Q} + \partial_{Q}w_{\epsilon}\right] 
     =  \frac{\partial}{\partial Q} I_{\epsilon}\left(u_{\epsilon, Q}\right) \nonumber\\
     +  I''_{\epsilon}\left(u_{\epsilon, Q}\right) 
         \left[w_{\epsilon}, \partial_{Q}u_{\epsilon, Q} \right]  
     +  I''_{\epsilon}\left(u_{\epsilon, Q}\right) 
         \left[w_{\epsilon}, \partial_{Q}w_{\epsilon} \right]  
     +  \norm{w_{\epsilon}}^{\gamma +1} \left(\norm{\partial_{Q}u_{\epsilon, Q}} 
         + \norm{\partial_{Q}w_{\epsilon}}\right) \nonumber\\
     =  \frac{\partial}{\partial Q} I_{\epsilon}\left(u_{\epsilon, Q}\right) + 
        o\left( f\left( \epsilon\right)^{2}\right)  
     =  \frac{\partial}{\partial Q} I_{\epsilon}\left(u_{\epsilon, Q}\right) \nonumber\\
     +  o\left(\epsilon^{6-2\delta} + + \epsilon^{\left(p+1\right)\left(2-\delta\right)} 
      +  \epsilon^{\left(2-\delta\right)\left(\sqrt{\frac{D\tan\alpha\left(\tan\alpha +1\right)}{\tan^{2}\alpha +1}} + \frac{4\tan\alpha}{\sqrt{\tan^{2}\alpha +1}}\right)} 
        + \epsilon^{\left(2-\delta\right)\left(p + \frac{2\sqrt{2}\tan\alpha}{\sqrt{\tan^{2}\alpha +1}}\right)}\right), 
\label{part1}\eern
since $\gamma =1$. 
The last two estimates then follow from Proposition $\ref{pr:espansione deriv}$. 

For the case $1<p<2$, we reason as in the proof of Proposition $4.5$ 
in \cite{GMMP1} to obtain the estimates. 
This concludes the proof. 
\end{proof}

\subsection{Proof of Theorem $\ref{th:solution}$}  \label{sec:proof}

We use degree theory and the previous expansions. 
First of all, since $Q$ is non degenerate for $H\mid_{\Gamma}$, 
we can find a small neighborhood $V$ of $Q$ in $\Gamma$ such that 
$\nabla H\mid_{\Gamma}\neq 0$ on $\partial V$ and such that 
in some set of coordinates 
\be    \deg\left( \nabla H\mid_{\Gamma}, V, 0\right) \neq 0. \nonumber\ee
Then, if $\delta$ is as in Lemma $\ref{lem:varcritica2}$, 
we choose $0<\beta <\frac{\delta}{2}$, and consider the set 
\be   Y = \left\lbrace \left(d,Q\right) : 
          d\in\left(\left(2-\beta\right) \vert\log\epsilon\vert , 
          \left(2+\beta\right) \vert\log\epsilon\vert\right), 
          \epsilon Q\in V \right\rbrace .  
\nonumber\ee
Since $\nabla H\mid_{\Gamma}\left(Q\right)$ corresponds to 
$\nabla_{T}H\left(\epsilon Q\right)$ in the scaled domain $\Omega_{\epsilon}$, 
by using $(\ref{red1})$ and our choice of $V$ we know that, 
as $\epsilon\rightarrow 0$ 
\be    \nabla_{Q_{T}} \mathbf{I}_{\epsilon}\left(u_{\epsilon, Q}\right) 
      =   - \tilde{C}_{1}\epsilon^{2}\nabla_{T} H\left(\epsilon Q\right)  + 
      o\left(\epsilon^{2}\right) \neq 0    
      \qquad \mathrm{on\ } \frac{1}{\epsilon}\partial V. 
\label{th1}\ee
On the other hand, by $(\ref{red2})$ we also have 
\be  \nabla_{Q_{d}} \mathbf{I}_{\epsilon}\left(u_{\epsilon, Q}\right) 
    =  -\epsilon^{\left(2-\beta\right) \left(1+ \frac{\sqrt{2}\tan\alpha}{\sqrt{\tan^{2}\alpha +1}}\right)}    
    \qquad \mathrm{for\ } d=\left(2-\beta\right) \vert\log\epsilon\vert , 
\label{th2}\ee
and 
\be   \nabla_{Q_{d}} \mathbf{I}_{\epsilon}\left(u_{\epsilon, Q}\right)  
   = - \tilde{C}_{1}\epsilon^{2}\nabla_{d} H\left(\epsilon Q\right)  +  
     o\left(\epsilon^{2}\right), 
     \qquad \mathrm{for\ } d=\left(2+\beta\right) \vert\log\epsilon\vert .
\label{th3}\ee 
Since we are assuming that the gradient of $H$ points toward $\partial_{D}\Omega$ 
near the interface $\Gamma$, 
$\nabla_{d} H\left(\epsilon Q\right)$ is negative and therefore the two 
$d$-derivatives in the last two formulas have opposite signs. 
It follows from the product formula for the degree and 
$(\ref{th1})$-$(\ref{th3})$ that 
\be    \deg\left(\nabla\mathbf{I}_{\epsilon}, Y, 0\right)  = 
       -\deg\left(\nabla H\mid_{\Gamma}, V, 0\right) \neq 0, 
\nonumber\ee
which proves the existence of a critical point for $\mathbf{I}_{\epsilon}$ in $Y$. 
Since we can choose $V$ and $\beta$ arbitrarily small, 
the solution has the asymptotic behavior required by the theorem, 
and more precisely by Remark $\ref{rem:th}$ $(b)$: 
the uniqueness of the global maximum follows fro the asymptotics of the solution 
and standard elliptic regularity estimates.

\begin{remark} 
To prove also the assertion in Remark $\ref{rem:th}$ $(a)$, 
using $(\ref{red})$ in the case of local maximum it is easy to construct 
an open set of $Z_{\epsilon}$ where the maximum of $\mathbf{I}_{\epsilon}$ 
at the interior is strictly larger than the maximum at the boundary. 
On the other hand, when we have a local minimum, one can construct a 
mountain-pass path connecting the two points parametrized by 
$\left(\frac{1}{\epsilon}Q, \left(2-\beta\right)\vert\log\epsilon\vert\right)$ and 
$\left(\frac{1}{\epsilon}Q, \left(2+\beta\right)\vert\log\epsilon\vert\right)$. 
Using a suitably truncated pseudo-gradient flow, one can prove that the 
evolution of the path remains inside 
$\frac{1}{\epsilon}V\times\left(\left(2-\beta\right)\vert\log\epsilon\vert, 
\left(2+\beta\right)\vert\log\epsilon\vert\right)$, 
and still find a critical point of $\mathbf{I}_{\epsilon}$. 
\end{remark}

\begin{center}

{\bf Acknowledgements}

\end{center}

\noindent The author heartily thanks \textit{Andrea Malchiodi} for helpful discussions 
and for having proposed her the topic. 
Moreover she wants to thank \textit{Carlo Sinestrari} for his useful suggestions. 
The author has been supported by the project FIRB-Ideas {\em Analysis and Beyond}.


\begin{thebibliography}{99999}
\bibitem[ABC]{ABC} A. Ambrosetti, M. Badiale, S. Cingolani, \textit{Semiclassical states of nonlinear Schr\"{o}dinger equations}, Arch. Rat. Mech. Anal. 140 (1997), 285-300.
\bibitem[AM]{AM} A. Ambrosetti, A. Malchiodi, \textit{Perturbation Methods and Semilinear Elliptic Problems on $\R^{n}$}, Birkh\"{a}user, Progr. in Math. 240 (2005).
\bibitem[AMS]{AMS} A. Ambrosetti, A. Malchiodi, S. Secchi, \textit{Multiplicity results for some nonlinear singularly perturbed elliptic problems on $\R^{n}$}, Arch. Rat. Mech. Anal. 159 (2001), no. 3, 253-271.
\bibitem[BL]{BL} H. Berestycki, P.L. Lions, \textit{Nonlinear scalar field equations (Part I and Part II)}, Arch. Rat. Mech. Anal. 82 (1983), 313-376.
\bibitem[CS]{CS} P. Cannarsa, C. Sinestrari, \textit{Semiconcave Functions, Hamilton-Jacobi Equations, and Optimal Control}, Progress in Nonlinear Differential Equations and their Applications 58, Birkh\"{a}user, Boston (2004). 
\bibitem[CH]{CH} R.G. Casten, C.J. Holland, \textit{Instability results for reaction diffusion equations with Neumann boundary conditions}, J. Diff. Eq. 27 (1978), no. 2, 266-273. 
\bibitem[DFW]{DFW} M. Del Pino, P. Felmer, J. Wei, \textit{On the role of the mean curvature in some singularly perturbed Neumann problems}, S.I.A.M. J. Math. Anal. 31 (1999), 63-79.
\bibitem[Di]{Di} S. Dipierro, \textit{Concentration of solutions for a singularly perturbed Neumann problem in non-smooth domains}, Ann. Inst. H. Poincar\'{e} (C) Anal. Non Lin\'{e}aire 28 (2011), no. 1, 107-126.
\bibitem[FW]{FW} A. Floer, A. Weinstein, \textit{Nonspreading wave packets for the cubic Schr\"{o}dinger equation with a bounded potential}, J. Funct. Anal. 69 (1986), 397-408.
\bibitem[Gri]{Gri} P. Grisvard, \textit{Elliptic problems in nonsmooth domains}, Pitman, London (1985).
\bibitem[GMMP1]{GMMP1} J. Garcia Azorero, A. Malchiodi, L. Montoro, I. Peral \textit{Concentration of solutions for some singularly perturbed mixed problems. Part I: existence results}, Archive Rat. Mech. Anal. 196 (2010), no. 3, 907-950.
\bibitem[GMMP2]{GMMP2} J. Garcia Azorero, A. Malchiodi, L. Montoro, I. Peral \textit{Concentration of solutions for some singularly perturbed mixed problems. Part II: asymptotic of minimal energy solutions}, Ann. Inst. H. Poincar\'{e} Anal. Non Lin\'{e}aire 27 (2010), 37-56.
\bibitem[GM]{GM} A. Gierer, H. Meinhardt, \textit{A theory of biological pattern formation}, Kybernetik (Berlin), 12 (1972), 30-39. 
\bibitem[Gri]{Gri} P. Grisvard, \textit{Elliptic problems in nonsmooth domains}, Pitman, London (1985).
\bibitem[Gr]{Gr} M. Grossi, \textit{Some results on a class of nonlinear Schr\"{o}dinger equations}, Math. Z. 235 (2000), no. 4, 687-705.
\bibitem[GPW]{GPW} M. Grossi, A. Pistoia, J. Wei, \textit{Existence of multipeak solutions for a semilinear Neumann problem via non smooth critical point theory}, Calc. Var. Partial Differential Equations 11 (2000), no. 2, 143-175.
\bibitem[Gu]{Gu} C. Gui, \textit{Multipeak solutions for a semilinear Neumann problem}, Duke Math. J. 84 (1996), no. 3, 739-769.
\bibitem[Li]{Li} Y.Y. Li, \textit{On a singularly perturbed equation with Neumann boundary conditions}, Comm. Partial Differential Equations 23 (1998), no. 3-4, 487-545.
\bibitem[LN]{LN} Y.Y. Li, L. Nirenberg \textit{The Dirichlet problem for singularly perturbed elliptic equation}, Comm. Pure Appl. Math. 51 (1998), 1445-1490.
\bibitem[LNT]{LNT} C.S. Lin, W.M. Ni, I. Takagi, \textit{Large amplitude stationary solutions to a chemotaxis systems}, J. Differential Equations 72 (1988), 1-27.
\bibitem[Ma]{Ma} A. Malchiodi, \textit{Concentration of solutions for some singularly perturbed Neumann problems}, Geometric analysis and PDEs, 63--115, Lecture Notes in Math., 1977, Springer, Dordrecht (2009).
\bibitem[Mat]{Mat} H. Matano, \textit{Asymptotic behavior and stability of solutions of semilinear diffusion equations}, Publ. Res. Inst. Math. Sci. 15 (1979), 401-454.
\bibitem[Ni]{Ni} W.M. Ni, \textit{Diffusion, cross-diffusion, and their spike-layer steady states}, Notices Amer. Math. Soc. 45 (1998), no. 1, 9-18.
\bibitem[NPT]{NPT} W.M. Ni, X.B. Pan, I. Takagi, \textit{Singular behavior of least-energy solutions of a semilinear Neumann problem involving critical Sobolev exponents}, Duke Math. J. 67 (1992), no. 1, 1-20.
\bibitem[NT1]{NT1} W.M. Ni, I. Takagi, \textit{On the shape of least-energy solution to a semilinear Neumann problem}, Comm. Pure Appl. Math. 41 (1991), 819-851.
\bibitem[NT2]{NT2} W.M. Ni, I. Takagi, \textit{Locating the peaks of least-energy solutions to a semilinear Neumann problem}, Duke Math. J. 70 (1993), 247-281.
\bibitem[NTY]{NTY} W.M. Ni, I. Takagi, E. Yanagida, \textit{Stability of least energy patterns of the shadow system for an activator-inhibitor model. Recent topics in mathematics moving toward science and engineering}, Japan J. Indust. Appl. Math. 18 (2001), no. 2, 259-272.
\bibitem[NW]{NW} W.M. Ni, J. Wei, \textit{On the location and profile of spike-layer solutions to singularly perturbed semilinear Dirichlet problems}, Comm. Pure Appl. Math. 48 (1995), 731-768. 
\bibitem[Oh]{Oh} Y. G. Oh, \textit{On positive Multi-lump bound states of nonlinear Schr\"{o}dinger equations under multiple well potentials}, Comm. Math. Phys. 131 (1990), 223-253.
\bibitem[St]{St} W.A. Strauss, \textit{Existence of solitary waves in higher dimensions}, Comm. Math. Phys. 55 (1977), 149-162.
\bibitem[Tu]{Tu} A.M. Turing, \textit{The chemical basis of morphogenesis}, Phil. Trans. Royal Soc. London, Series B, Biological Sciences, 237 (1952), 37-72.
\bibitem[We]{We} J. Wei, \textit{On the boundary spike layer solutions of a singularly perturbed semilinear Neumann problem}, J. Differential Equations 134 (1997), no. 1, 104-133.
\bibitem[We2]{We2} J. Wei, \textit{On the effect of domain geometry in singular perturbation problems}, Differential Integral Equations 13 (2000), no. 1-3, 15-45.
\end{thebibliography}
\end{document}